\documentclass[11pt,a4paper]{amsart}
\usepackage[T1]{fontenc}
\usepackage[utf8]{inputenc}
\usepackage{lmodern,amsmath,amssymb,amsthm,mathtools}
\usepackage[margin=28mm]{geometry}
\usepackage[expansion=false]{microtype}
\usepackage{needspace}
\usepackage{xcolor}
\definecolor{linkblue}{RGB}{28,54,84}
\usepackage[colorlinks=true,linkcolor=linkblue,citecolor=linkblue,urlcolor=linkblue]{hyperref}
\hypersetup{pdftitle={A sharp threshold for mixed Q-curvature rigidity},pdfauthor={Wangzhe Wu}}
\numberwithin{equation}{section}
\newtheorem{theorem}{Theorem}[section]
\newtheorem{proposition}[theorem]{Proposition}
\newtheorem{lemma}[theorem]{Lemma}
\newtheorem{corollary}[theorem]{Corollary}
\newtheorem{maintheorem}{Theorem}

\newcommand{\Ric}{\operatorname{Ric}}
\newcommand{\tr}{\operatorname{tr}}
\newcommand{\Vol}{\operatorname{Vol}}
\newcommand{\R}{\mathbb R}
\newcommand{\dd}{\mathrm d}
\allowdisplaybreaks[2]
\newcommand{\sech}{\operatorname{sech}}
\newcommand{\Sph}{\mathbb S}
\newcommand{\cK}{\mathcal K}
\title[A sharp threshold for mixed curvature rigidity]
{A sharp threshold for mixed $Q$-curvature rigidity}
\author{Wangzhe Wu}
\address[Wangzhe Wu]{School of Mathematics and Statistics,
Ningbo University, Ningbo, Zhejiang, People's Republic of China}
\email{wuwz18@mail.ustc.edu.cn}
\date{}
\subjclass[2020]{Primary 53C21; Secondary 35J60, 34C37, 53C24}
\keywords{Conformal geometry, Q-curvature, sigma-two curvature, Einstein metric,
sharp rigidity, nonuniqueness, neck construction}
\begin{document}
\begin{abstract}
Let $I_a(g)=Q_g+a\sigma_2(A_g)$, where $A_g$ is the Schouten
tensor and $Q_g$ is Branson's $Q$-curvature. On a closed connected
manifold of dimension $n\ge4$ with a positive Einstein metric $g_0$, we prove
that every smooth metric conformal to $g_0$ with nonnegative scalar
curvature and constant $I_a(g)$ is Einstein for $a\ge-4$.
This lower threshold is sharp in every dimension: for each
sufficiently small $\eta>0$, the round sphere admits a smooth
non-Einstein conformal metric with positive scalar curvature and
constant $I_{-4-\eta}(g)$.
The rigidity proof combines the pointwise Obata identity with a
Newton identity using the minimum scalar curvature as its reference
value. A local system at the pole and integral estimates yield a
radial shooting construction that ensures smooth closure at both
poles. The rescaled necks converge to the Riemannian Schwarzschild metric, and we
determine the asymptotic neck scale. We also prove rigidity for
$I_a(g)=\Lambda R_g^\theta$ when $R_g>0$, $a\ge-4$, and $\theta\le1$.
In dimension four, constant-$I_a(g)$ rigidity holds for
$-4\le a\le-4/3$ without a scalar-curvature sign assumption.
\end{abstract}
\maketitle

\section{Introduction}
On a closed connected manifold, Obata's theorem~\cite{Obata71}
characterizes constant-scalar-curvature metrics conformal to a
positive Einstein metric. For fourth-order curvature equations,
the corresponding uniqueness problem has been studied through
determinant functionals~\cite{Gursky97}, constant
$Q$-curvature~\cite{Vetois23}, and mixed-curvature
identities~\cite{Case24,LiWei25}. The nonuniqueness results of
Gursky--Malchiodi~\cite{GM12} show that the answer depends on the
curvature combination being prescribed.
For locally conformally flat metrics, Ma--Wu~\cite{MaWu25}
identified a divergence structure for the $\sigma_k$-Yamabe operator.
They used it to establish weak continuity of the associated measures
under local $L^1$ convergence in the admissible class.

For the mixed curvature $I_a(g)=Q_g+a\sigma_2(A_g)$, we prove
that, under nonnegative scalar curvature, the sharp lower threshold
for rigidity is $a=-4$ in every dimension $n\ge4$. Rigidity holds
for all $a\ge-4$, whereas non-Einstein conformal metrics on the
round sphere with strictly positive scalar curvature exist for
every $a$ in a sufficiently small interval immediately below $-4$.
These examples develop a shrinking neck whose rescaled limit is
the Riemannian Schwarzschild metric.

To state the results, let $\Ric_g$ and $R_g=\tr_g\Ric_g$ denote
the Ricci tensor and scalar curvature. The Schouten tensor and
its first two elementary symmetric curvatures are
\[
 \begin{gathered}
 A_g\coloneqq\frac1{n-2}\left(\Ric_g-\frac{R_g}{2(n-1)}g\right),
 \qquad \sigma_1(A_g)\coloneqq\tr_gA_g=\frac{R_g}{2(n-1)},\\
 \sigma_2(A_g)\coloneqq\frac12\left(\sigma_1(A_g)^2-|A_g|_g^2\right).
 \end{gathered}
\]
All traces and norms are taken with respect to $g$. With
$\Delta_g f=\tr_g(\nabla_g^2f)$, we use Branson's normalization
\begin{equation}\label{intro:Q}
 Q_g\coloneqq-\Delta_g\sigma_1(A_g)-2|A_g|_g^2
       +\frac n2\sigma_1(A_g)^2,
 \qquad I_a(g)\coloneqq Q_g+a\sigma_2(A_g),\qquad a\in\R.
\end{equation}
These conventions agree with Li--Wei~\cite{LiWei25}.
A manifold is closed if it is compact without boundary.

\Needspace{14\baselineskip}
\begin{maintheorem}[Rigidity]\label{main:rigidity}
Let $(M^n,g_0)$ be closed and connected, $n\ge4$, with $g_0$ Einstein
and $R_{g_0}>0$. Suppose $g=u^2g_0$, where $u$ is smooth and positive,
satisfies
\[
 R_g\ge0,\qquad I_a(g)=\Lambda,\qquad a\ge-4,
\]
for a constant $\Lambda$. Then $g$ is Einstein and $R_g>0$.
Furthermore, $g=s^2g_0$ for a constant $s>0$, unless $(M,g_0)$
is a round sphere up to scale. In the round case,
$g=s^2\Phi^*g_0$ for a conformal diffeomorphism $\Phi$.
In either case,
\[
 \Lambda=\frac{(n-1)a+n^2-4}{8n(n-1)^2}R_g^2.
\]
\end{maintheorem}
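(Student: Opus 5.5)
Write $E=\Ric_g-\frac{R_g}{n}g$ for the traceless Ricci tensor of $g=u^2g_0$. The plan is to prove $E\equiv0$ by an integral identity of Obata type, with the fourth-order equation $I_a(g)=\Lambda$ supplying the divergence terms.

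\textbf{Step 1: rewrite $I_a$.} Since $|A_g|^2=\frac{|E|^2}{(n-2)^2}+\frac{\sigma_1^2}{n}$, we have
\[
 \sigma_2(A_g)=\frac{n-1}{2n}\sigma_1^2-\frac{|E|^2}{2(n-2)^2}.
\]
Substituting gives
\[
 I_a=-\frac{1}{2(n-1)}\Delta_g R_g+c_1(n,a)R_g^2-c_2(a)\frac{|E|^2}{(n-2)^2},
 \qquad c_2=2+\tfrac a2 .
\]
So $c_2\ge0$ exactly when $a\ge-4$, and equality $c_2=0$ is the borderline case. The constant $c_1$ can be checked against the stated formula for $\Lambda$ when $E=0$.

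\textbf{Step 2: the Obata identity.} Because $g_0$ is Einstein, the standard pointwise identity for conformal metrics applies. Writing $g=v^{-2}g_0$, the traceless Hessian of $v$ with respect to $g$ is proportional to $vE$, and this yields
\[
 \mathrm{div}_g(v\,E(\nabla v,\cdot))\;\text{-type expressions}\;\Rightarrow\;\int_M v\,|E|^2\,\dd V_g=\frac{n-2}{2n}\int_M E(\nabla_g v,\nabla_g R_g)\,\dd V_g\cdot(\text{const}),
\]
using the contracted Bianchi identity $\mathrm{div}E=\frac{n-2}{2n}\dd R$.

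\textbf{Step 3: Newton-type test function.} The right-hand side must now be controlled through the equation. I would use the Newton identity $\mathrm{div}(T_1(\nabla f))$, with $T_1$ the first Newton tensor of $A_g$, tested against $\phi=R_g-R_{\min}$. Here $R_{\min}=\min_M R_g\ge0$ is the reference value suggested in the abstract. Multiply the equation from Step 1 by $\phi$ and integrate. The Laplacian term becomes $-\int|\nabla R|^2\le0$. The quadratic term $c_1\int(R^2-\bar R^2)\phi$ and the $E$-term $-c_2\int|E|^2\phi$ then have to be combined with the Obata identity, weighted by suitable multiples of $v$.

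The aim is a single identity of the form
\[
 0=\int\bigl(\alpha|\nabla R|^2+\beta\,c_2\,\phi|E|^2+\gamma\,v|E|^2\bigr)
\]
with all coefficients nonnegative and $\gamma>0$ whenever $a\ge-4$. The choice of $R_{\min}$ is what should make the $\phi$-weighted terms nonnegative, with $R_g\ge0$ used here.

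\textbf{Step 4: conclusion.} From Step 3 we get $E\equiv0$ and $R_g$ constant, so $g$ is Einstein. Then $R_g>0$, since a conformal Einstein metric in the positive Yamabe class cannot have $R_g\le0$. For the classification, Obata's theorem gives that $u$ is constant unless $(M,g_0)$ is round, in which case $g=s^2\Phi^*g_0$ for a conformal diffeomorphism $\Phi$. Finally, plugging $E=0$ into Step 1 gives $\Lambda=c_1R_g^2$, and this matches the stated formula.

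\textbf{Main obstacle.} The hardest part is Step 3: choosing the weights so that the cross term $\int E(\nabla v,\nabla R)$ is absorbed for the full range $a\ge-4$. The difficulty is sharpest at the endpoint $a=-4$, where $c_2=0$ and all the positivity must come from $|\nabla R|^2$ and the Obata term. I would first try testing against $v\phi$ and using Cauchy--Schwarz with the exact Obata weight. Failing that, I would integrate over super-level sets of $R$.
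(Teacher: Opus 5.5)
\textbf{There is a genuine gap: Step 3 is the entire proof, and you leave it unresolved.} Steps 1 and 4 are fine. The obstacle you describe, however, is misdiagnosed.

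\emph{Step 2 is misstated.} Take $g=u^2g_0$, so $u=v^{-1}$. It is $u$, not $v$, whose $g$-traceless Hessian equals $-\frac{u}{n-2}E$. Then
\[
\operatorname{div}(E\nabla u)=\frac{n-2}{2n}\langle\nabla R,\nabla u\rangle-\frac{u}{n-2}|E|^2,
\qquad\text{hence}\qquad
\int_M u|E|^2=\frac{(n-2)^2}{2n}\int_M\langle\nabla R,\nabla u\rangle .
\]
No contraction $E(\nabla u,\nabla R)$ appears. That term belongs to the higher-order Case--Gursky--V\'etois identity.

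\emph{Your Step 3 closes only at the endpoint.} Write the scalar equation as $2(n-1)\Lambda=-\Delta R-\alpha_1|E|^2+\alpha_2R^2$, with $\alpha_1=\frac{(n-1)(a+4)}{(n-2)^2}$ and $\alpha_2=\frac{(n-1)a+n^2-4}{4n(n-1)}$. Test it against $\phi=R-r$, where $r=\min_MR\ge0$. At a minimum point, $\alpha_1\ge0$ gives $2(n-1)\Lambda\le\alpha_2r^2$. Together these yield
\[
 \int_M|\nabla R|^2+\alpha_2\int_M(R+r)(R-r)^2\le\alpha_1\int_M(R-r)|E|^2 .
\]
At $a=-4$ the right side vanishes, so $R$ is constant and Obata gives $E=0$. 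The endpoint is therefore the \emph{easy} case for your route. For $a>-4$, the $|E|^2$ term sits on the wrong side. Nothing in your plan bounds $\int_M(R-r)|E|^2$ by the left side. Obata controls only $\int_M u|E|^2$, whose weight is unrelated to $R-r$, and you give no mechanism relating the two.

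\textbf{The missing idea.} Apply the Newton identity to the conformal factor itself, and build the reference curvature into the tensor rather than into a test function. Use $\operatorname{div}T_1(A_g)=0$, the traceless Hessian formula, and
\[
\Delta u=\frac n2u^{-1}|\nabla u|^2-\frac{uR}{2(n-1)}+\frac{R_{g_0}}{2(n-1)u}.
\]
For any constant $r$ this gives
\[
\begin{aligned}
 &\operatorname{div}\Bigl[(n-1)\Bigl(T_1(A_g)-\frac{r}{2n}g\Bigr)\nabla u\Bigr]\\
 &\quad=\frac{R-r}{4n}\Bigl(n(n-1)\frac{|\nabla u|^2}{u}+ur+\frac{R_{g_0}}{u}\Bigr)
 +u\Bigl(\frac{r^2}{4n}-2(n-1)\sigma_2(A_g)\Bigr).
\end{aligned}
\]
Let $B=(n-1)a+n^2-4$. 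Add $B/(n-1)$ times this current to the pointwise Obata current $u\nabla R-\frac{2n}{n-2}E\nabla u$, whose divergence is $u\Delta R+\frac{2n}{(n-2)^2}u|E|^2$. Then substitute the equation for $\Delta R$. Three things happen:
\begin{itemize}
\item[(i)] the $uR^2$ terms cancel, and the coefficient of $u|E|^2$ becomes $\frac{n}{n-2}$, independent of $a$;
\item[(ii)] all $a$-dependence moves into $\alpha_2\ge0$, which multiplies $(R-r)\bigl(n(n-1)u^{-1}|\nabla u|^2+ur+u^{-1}R_{g_0}\bigr)\ge0$ when $r=\min_MR\ge0$;
\item[(iii)] the remaining term $u\bigl(\alpha_2r^2-2(n-1)\Lambda\bigr)$ is nonnegative by the minimum-point inequality.
\end{itemize}
Integrating over $M$ gives $E=0$ for every $a\ge-4$ at once. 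No $|\nabla R|^2$ term and no absorption step are needed.
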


The lower endpoint is sharp even among metrics with strictly positive
scalar curvature.

\begin{maintheorem}[Sharpness]\label{main:sharpness}
For each integer $n\ge4$, there exists $\eta_n>0$ such that, for every
$0<\eta<\eta_n$, there is a smooth metric $g_{n,\eta}$ on $\Sph^n$,
conformal to its unit round metric, satisfying
\[
 R_{g_{n,\eta}}>0,\qquad I_{-4-\eta}(g_{n,\eta})=\Lambda_{n,\eta},
 \qquad g_{n,\eta}\text{ is not Einstein}.
\]
These metrics are rotationally symmetric and invariant under reflection
across the equator. They can be normalized so that
\[
 \begin{cases}
 \displaystyle \Lambda_{4,\eta}=-\frac32\eta,\quad
 \Vol_{g_{4,\eta}}(\Sph^4)=\frac{8\pi^2}{3},&n=4,\\[2mm]
 \displaystyle \Lambda_{n,\eta}=\frac{n^2(n-4)}8,&n\ge5.
 \end{cases}
\]
Here rotational symmetry means invariance under rotations fixing
two antipodal points.
\end{maintheorem}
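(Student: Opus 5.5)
We will construct the metrics $g_{n,\eta}$ by a radial shooting argument in cylindrical coordinates, combined with a reflection symmetry across the equator. On $\Sph^n\setminus\{\pm p\}\cong\R\times\Sph^{n-1}$ we write
\[
 g=e^{2w(t)}\bigl(\dd t^2+g_{\Sph^{n-1}}\bigr).
\]
The round metric corresponds to $w=-\log\cosh t$ up to translation and scale. In these coordinates the equation $I_a(g)=\Lambda$ becomes a fourth-order autonomous ODE in $w$. This follows from the conformal covariance of the Paneitz operator together with the explicit form of $\sigma_2$ for warped products.

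For $n\ge5$ it is convenient to work with $u=e^{(n-4)w/2}$. In that variable the $Q$-part is the Paneitz operator $P u=\frac{n-4}{2}\Lambda_0 u^{(n+4)/(n-4)}$. The $\sigma_2$-part is a fully nonlinear second-order term, quadratic in $(w',w'')$ and multiplied by $e^{-4w}$. For $n=4$ we work directly with $w$. Smooth closure at $t=-\infty$ requires $w(t)=t+O(1)$ with a regular expansion. Reflection symmetry about $t=0$ requires $w'(0)=w'''(0)=0$.

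The construction proceeds in the following order.

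\textbf{Step 1 (local solutions at the pole).} Near the pole we solve the radial problem in the ball coordinate $r=e^{t}$. The solutions that extend smoothly over the pole form a family with two free parameters after fixing scale: $u(0)$ and the second-order coefficient $u''(0)$. We obtain this by a contraction argument for the ``local system at the pole.'' The operator $I_a$ is degenerate there only through the $r^{-k}$ radial terms, and these are handled by the usual regular-singular-point analysis.

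\textbf{Step 2 (linearization at $a=-4$).} At $a=-4$ the combination $Q-4\sigma_2$ is exactly the one for which Theorem~\ref{main:rigidity} is sharp. We expect that, at the round solution, it produces a degenerate kernel direction beyond the conformal ones. Writing $a=-4-\eta$, we look for solutions that are not perturbations of the round metric. Instead they are a gluing of two nearly round caps, joined through a neck of scale $\epsilon=\epsilon(\eta)\to0$.

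We rescale the neck by $\epsilon$, setting $\tilde g=\epsilon^{-2}g$ near the neck. The leading-order equation is then $Q-4\sigma_2=0$ in the scale-invariant regime. We verify by direct computation that the Riemannian Schwarzschild metric satisfies this equation, since it is scalar flat and conformally flat in the relevant radial form. This gives the model neck.

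\textbf{Step 3 (shooting and matching).} We shoot from the pole using the Step 1 parameter. We then show, via integral (energy or Pohozaev-type) estimates for the autonomous ODE, that the trajectory enters a neighborhood of the Schwarzschild neck. Next we show that the symmetry condition $w'''(0)=0$ at the neck center, with $w'(0)=0$ imposed by choosing $t=0$ at the neck minimum, changes sign as the shooting parameter varies across a compact interval. The intermediate value theorem then yields a symmetric solution. Reflection gives smooth closure at the second pole.

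We verify $R_g>0$ along the whole trajectory from the explicit approximate profiles: near the caps the metric is close to round, and in the neck it is close to Schwarzschild with a positive $\eta$-correction. The metric is not Einstein because its neck is not round, so by Theorem~\ref{main:rigidity} it lies in the regime where rigidity fails. The normalizations of $\Lambda_{n,\eta}$ are obtained by scaling. For $n\ge5$ we fix $\Lambda$ to its round value. For $n=4$ the equation is scale-invariant in $\Lambda$, so we fix the volume instead, and $\Lambda_{4,\eta}=-\frac32\eta$ is read off from the Gauss--Bonnet/Chern formula $\int(Q+a\sigma_2)=(1+a/4)\cdot 8\pi^2\chi$-type identities.

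\textbf{Main obstacle.} The main obstacle is Step 3. We need a uniform control of the neck scale $\epsilon(\eta)$, and we need to show that the matching error from the Schwarzschild tail is dominated by the $\eta$-forcing, so that the sign change in $w'''(0)$ is genuine. We would first try to compute the leading-order solvability (Melnikov-type) integral of the $\eta$-perturbation against the Schwarzschild kernel element $\partial_\epsilon$. Its nonvanishing with the correct sign is what forces $a<-4$ rather than $a>-4$. Once this holds, a quantitative implicit-function argument in weighted spaces along the neck finishes the construction.
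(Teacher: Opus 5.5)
Your global picture is right: two nearly round caps joined by a shrinking Schwarzschild neck, reflection symmetry, one shooting parameter closed by the intermediate value theorem, and normalization by scaling and, for $n=4$, by conformal invariance of $\int\sigma_2$. But the proposal omits exactly the steps that make the construction work, and the heuristic in Step~2 points the wrong way. There is no extra kernel at the round metric when $a=-4$. Linearizing \eqref{ode:scalar-form} at $g_{\Sph^n}$ in the direction $e^{2\phi}g_{\Sph^n}$ gives $(\Delta-\tfrac B2)(\Delta+n)\phi$ with $B=n(n-4)\ge0$. This annihilates only first spherical harmonics, plus constants (from scaling) when $n=4$. The matching needs the opposite, namely nondegeneracy of the round cap. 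In the paper this is the transversality at $(0,1,0)$ of the two curves $\mathcal C_0^\pm\subset\Sigma=\{x=0,\ y>0\}$ of smooth-pole data. Lemma~\ref{shooting} proves it by a maximum principle for the linearization $e$ of $y-q$ along the round orbit, whose potential in \eqref{round:transversality} is strictly positive. Without such a statement nothing forces $w'''(0)$ to change sign, and asserting the sign change amounts to asserting the theorem.

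The direction of shooting is a second concrete obstruction. In \eqref{system}, the equilibrium $(-1,0,0)$ has linearized eigenvalues $2$, $4$, $-(n-2)$. Smooth poles at $t\to-\infty$ lie on its two-dimensional unstable manifold. At $a=-4$, one branch of its one-dimensional stable manifold is the Schwarzschild orbit. The other branch is $x=-\coth\bigl(\tfrac{(n-2)t}{2}\bigr)$, which has $q<0$ and blows up. If you shoot backward from a nearly round cap, caps deviating from round on one side leave along the $q<0$ branch and never form a neck. So $w'''(0)$ is not defined on an interval around the round parameter. The admissible parameters lie in a one-sided, $\eta$-dependent window whose location is the unknown neck scale, and the ``energy or Pohozaev-type'' and ``Melnikov'' estimates meant to control it are not supplied.

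The paper instead shoots from the symmetric neck, with $x(0)=z(0)=0$ and $y(0)$ depending on $d$ only at order $\eta^2$, so that $I_{-4-\eta}=-nd\eta^2$. It then passes to $h=\eta e^{2v}$ and $j=\eta^{-1}\sigma_1(A_g)$. In these variables the outward passage is stable and has a regular limit at $\eta=0$ (Lemma~\ref{passage}). The exit is a smooth-pole cap with pole trace $L_n=\lim_{t\to\infty}j_1(t)$, where $\Delta_{g_*}j_1=\tfrac12|A^\circ_{g_*}|^2$ (Lemma~\ref{neck:variation}). This $L_n>0$ is the Melnikov-type quantity you anticipated, and it does three things. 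It is why $\eta>0$ works. It keeps $R_g>0$ through the transition region, where neither approximate profile alone is accurate. It gives $d^*=-\frac{n-4}{2n}L_n^2<0$, hence $\Lambda>0$ for $n\ge5$, which your normalization to $n^2(n-4)/8$ requires but does not establish.

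Finally, $I_a(\lambda^2g)=\lambda^{-4}I_a(g)$ in every dimension, so the $n=4$ equation is not scale-invariant in $\Lambda$. What is special for $n=4$ is that $\Lambda\Vol_g=(a+4)\int\sigma_2(A_g)\,\dd v_g$ is conformally invariant. That is what your value $-\tfrac32\eta$ actually uses.
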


Thus no half-line $[a_0,\infty)$ with $a_0<-4$ can replace
$[-4,\infty)$ in Theorem~\ref{main:rigidity}, even under the
stronger assumption $R_g>0$.

\subsection{The endpoint and the geometry of the examples}
The value $-4$ is distinguished by the identity~\cite{LiWei25}
\begin{equation}\label{intro:structural}
 I_a(g)=-\Delta_g\sigma_1(A_g)
       +\frac{n-4}{2}\sigma_1(A_g)^2+(a+4)\sigma_2(A_g).
\end{equation}
At the endpoint, this becomes
\begin{equation}\label{intro:endpoint}
 I_{-4}(g)=-\Delta_g\sigma_1(A_g)+\frac{n-4}{2}\sigma_1(A_g)^2.
\end{equation}
Equivalently, the coefficient of the squared norm of the trace-free
Schouten tensor vanishes; see~\eqref{ode:scalar-form}.

The rigidity argument combines the pointwise Obata identity with
a Newton identity containing a constant reference curvature.
The resulting identity, Proposition~\ref{id:mixed}, holds for every
smooth conformal metric $g=u^2g_0$ with $g_0$ Einstein, without a
curvature equation, compactness, or curvature-sign assumptions.
It avoids the mixed contraction $E_g(\nabla R_g,\nabla u)$ in the
Case--Gursky--V\'etois identity~\cite{Case24,LiWei25}, where $E_g$
is the trace-free Ricci tensor.
For Theorem~\ref{main:rigidity}, we choose the reference curvature
to be $\min_M R_g$. Evaluating the curvature equation at a point
attaining this minimum makes every term in the integrated identity
nonnegative for all $a\ge-4$, including the endpoint.

The metrics in Theorem~\ref{main:sharpness} develop a neck at the
equator. Proposition~\ref{high:neck-limit} shows that, after rescaling
by the inverse square of the equatorial radius, they converge smoothly
on compact subsets of the cylinder to
\[
 \cosh^{4/(n-2)}\!\left(\frac{(n-2)t}{2}\right)
             (\dd t^2+g_{\Sph^{n-1}}),\qquad t\in\R,
\]
where $g_{\Sph^{n-1}}$ is the unit round metric. Up to homothety,
this is the Riemannian Schwarzschild metric~\cite[Section~1]{BrayLee09},
which is complete and scalar flat, with two asymptotically Euclidean
ends. The proposition also determines the asymptotic neck scale.
With the normalizations in Theorem~\ref{main:sharpness}, the equatorial
radius tends to zero, while the tangential and radial sectional
curvatures there tend to $+\infty$ and $-\infty$, respectively.

\subsection{Earlier rigidity results}
Gursky~\cite{Gursky97} used conformal integral identities to study
critical metrics of determinant functionals in dimension four.
For uniqueness of conformal metrics with constant $Q$-curvature on
closed Einstein backgrounds, see V\'etois~\cite{Vetois23}.
Case~\cite{Case24} proved rigidity results for $I_a$ in conformal
classes containing a positive Einstein metric. Under $R_g\ge0$, Li--Wei~\cite{LiWei25}
enlarged the constant-$I_a(g)$ rigidity interval to
$[a_-(n),a_+(n)]$, where
\[
 a_\pm(n)\coloneqq
 \frac{(n-2)(n-4)\pm n\sqrt{n^2+4n}}{2(n-1)}.
\]
For $n\ge4$,
\[
 a_-(n)+4=\frac{2n}{(n-1)(n+2+\sqrt{n^2+4n})}>0.
\]
Theorem~\ref{main:rigidity} extends the conclusion to $a\ge-4$.
In dimension four, without a sign assumption on $R_g$, Case's interval
$[-(2+2\sqrt{21})/3,-4/3]$ was extended by Li--Wei to
$[-8\sqrt2/3,-4/3]$; see \cite[Theorem~1.2]{Case24} and
\cite{LiWei25}.
Proposition~\ref{four:unsigned} extends this interval to $[-4,-4/3]$.

For the quotient equation, Ge--Wang--Wei~\cite[Theorem~3.2]{GWWQuot26}
prove rigidity for $Q_g=\Lambda R_g^\theta$, $0<\theta\le1$, under
$R_g>0$. Corollary~\ref{rig:quotient} extends this conclusion in
dimensions $n\ge4$ to $I_a(g)=\Lambda R_g^\theta$ for all
$a\ge-4$ and $\theta\le1$, with $\Lambda\in\R$.
The background hypothesis is relevant here:
Li--Wang--Wei~\cite[Theorem~1.1]{CLWW26} construct noncompact families of metrics
with positive scalar curvature and positive constant $Q_g/R_g$ on
suitable non-conformally-flat backgrounds on $\Sph^n$, $n\ge25$.
Those backgrounds are not assumed to be conformal to positive
Einstein metrics.

\subsection{The nonuniqueness construction}
Gursky--Malchiodi~\cite[Theorem~1.3]{GM12} constructed smooth nonround
critical metrics in the round conformal class on $\Sph^4$ for the
determinant of the Paneitz operator and for Cheeger's half-torsion.
Their parameter is $\beta=\gamma_2/(12\gamma_3)$, where
$\gamma_2$ and $\gamma_3\ne0$ are the coefficients of their
$Q$-curvature and of $-\Delta_gR_g$, respectively, in the
Euler--Lagrange curvature. Their $Q$-curvature is half of
\eqref{intro:Q}. Since $Q_g=-\frac16\Delta_gR_g+4\sigma_2(A_g)$
in dimension four, their equation is equivalent, for $\beta\ne-1$,
to constant $I_a(g)$ with
\[
 a=-\frac4{1+\beta}.
\]
The Paneitz determinant and half-torsion correspond, respectively,
to $\beta=-7/16$ and $\beta=-31/58$, hence to $a=-64/9$ and
$a=-232/27$.

In \cite[Section~5.2]{GM12}, the authors discuss an extension to
general coefficients in the range
\[
 -1<\beta<-\frac14,\qquad\text{equivalently}\qquad a<-\frac{16}{3}.
\]
This range stays a positive distance below $-4$; the uniform estimates
in their construction break down for $\beta>-1/4$.
Moreover, their nonuniqueness theorem makes no assertion about the
sign of $R_g$. Theorem~\ref{main:sharpness} provides examples with
$R_g>0$ for every sufficiently small $\eta>0$ at $a=-4-\eta$, in
every dimension $n\ge4$.

In dimension four, the constructions also differ in their central
initial data. Write
$g=e^{2v(t)}(\dd t^2+g_{\Sph^3})$, and set $x=-v'$ and $y=x'$.
Reflection symmetry about $t=0$ makes $v$ even and gives $x(0)=0$.
The data in \cite[(5.11)]{GM12} satisfy $y(0)>0$, so the radius
$e^{v(t)}$ has a strict local maximum at the center. Our data have
$y(0)<0$, which gives a strict local minimum and produces a neck.

To prove Theorem~\ref{main:sharpness}, we solve the radial ODE
starting from a symmetric neck and vary one initial value.
The aim is to choose this value so that the resulting metric
extends smoothly across a pole. Reflection then gives a smooth
metric on the whole sphere.

We compare initial data on a common section of the ODE phase
space. Section~\ref{sec:radial} constructs two curves of initial
data in this section: trajectories through the first curve give
metrics smooth at one pole, and those through the second give
metrics smooth at the other pole. The matching condition is
that the intersection point of the neck trajectory with the
section lies on the second curve.

The main difficulty is that the travel time from the neck to
this section tends to infinity as $\eta\downarrow0$.
Lemma~\ref{neck:variation} first determines the leading scalar
curvature near the neck. Lemma~\ref{passage} then controls the
long passage away from it. After continuation to the common
section, the limiting data lie on the first curve.
A scalar maximum principle shows that the
two limiting curves meet with different tangent directions at
the round initial point. This allows us to place nearby shooting
data on opposite sides of the second curve. The intermediate
value theorem then gives an exact match
(Lemma~\ref{shooting}).

Finally, we check positive scalar curvature along the resulting
metric. The neck makes it non-Einstein, and constant rescaling
gives the normalizations in Theorem~\ref{main:sharpness}.

\subsection{Related fourth-order Liouville theorems}
Lin~\cite{Lin98} classified positive entire solutions of the critical
biharmonic equation in $\R^n$, $n\ge5$, and Wei--Xu~\cite{WeiXu99}
developed the classification theory for higher-order conformally
invariant equations. These results provide the Euclidean background
for fourth-order conformal rigidity.

Ma--Wu--Wu~\cite{MWW25} used differential identities to prove a
Liouville theorem for the subcritical biharmonic equation on complete
noncompact manifolds with nonnegative Ricci curvature.
Ma~\cite{Ma26} extended this approach to a broader class of semilinear
biharmonic equations. On closed four-manifolds with a positive
Ricci lower bound, Ma--Wu--Zhou~\cite{MWZ25} studied a Liouville-type
equation and obtained an Onofri-type inequality on $\Sph^4$.

For closed manifolds of dimension $n\ge5$,
Gursky--Malchiodi~\cite{GM15} established a strong maximum principle
for the Paneitz operator under $R_g\ge0$, $Q_g\ge0$, and
$Q_g\not\equiv0$. This gives further context for curvature-sign
assumptions in fourth-order conformal problems.
Our companion paper~\cite{CWSigma} uses the Newton identity with a
variable reference curvature to develop boundary rigidity, sharp
$\sigma_2$ inequalities, and stability.

\paragraph{Organization of the paper.}
Section~\ref{sec:identities} introduces the Newton identity with a
reference curvature and the pointwise mixed-curvature identity.
Section~\ref{sec:rigidity} proves the rigidity theorem and its
extensions. Section~\ref{sec:radial} derives the radial equations
and constructs families of local solutions giving smooth metrics at a pole.
Section~\ref{sec:high-neck} proves Theorem~\ref{main:sharpness}
and determines the scale and geometry of the limiting neck.

\section{Reference curvature identities}\label{sec:identities}

\subsection{Notation and conventions}
Let $(M^n,g_0)$ be Einstein, $n\ge4$, and let $g=u^2g_0$, where
$u>0$ is smooth. The local identities require neither compactness nor a
curvature-sign assumption. Whenever we integrate an identity,
we assume that $M$ is closed.
Unless otherwise indicated, derivatives, contractions, and norms
are taken with respect to $g$. Our curvature conventions follow
Li--Wei~\cite[Section~1]{LiWei25}.

For a symmetric covariant two-tensor $T$, write
$T^\circ=T-(\tr_gT)g/n$. Its action on vectors is understood
through the endomorphism $g^{-1}T$. In particular,
\begin{equation}\label{id:notation}
 E_g\coloneqq\Ric_g-\frac{R_g}{n}g=(n-2)A_g^\circ,\qquad
 \sigma_1(A_g)=\frac{R_g}{2(n-1)}.
\end{equation}
We use $\Delta_g f=\tr_g(\nabla_g^2f)$ and
$(\operatorname{div}_gT)_j=\nabla^iT_{ij}$, with indices raised
using $g$ and repeated indices summed. Thus $-\Delta_g$ has
nonnegative spectrum on a closed manifold. We identify one-forms
and vector fields using $g$.

The volume form is $\dd v_g$, and
$\Vol_g(M)=\int_M\dd v_g$. The symbol $|\Sph^m|$ denotes the
volume of the unit round $m$-sphere. A constant rescaling $s^2g$,
with $s>0$, is called a homothety.
In estimates, $C$ denotes a positive constant whose dependence is
specified locally and whose value may change between occurrences.
The notation $f\sim h$ means $f/h\to1$ in the stated limit.

\Needspace{8\baselineskip}
\subsection{Classical formulas used below}

With the divergence convention fixed above, the contracted second
Bianchi identity and its equivalent forms are
\begin{align}
 \nabla^i(\Ric_g)_{ij}&=\frac12\nabla_jR_g,\label{id:bianchi-ric}\\
 \nabla^i(A_g)_{ij}&=\nabla_j\sigma_1(A_g)
       =\frac1{2(n-1)}\nabla_jR_g,\label{id:bianchi-schouten}\\
 \nabla^i(E_g)_{ij}&=\frac{n-2}{2n}\nabla_jR_g.\label{id:bianchi-tracefree}
\end{align}
The last identity is also recorded by Li--Wei~\cite{LiWei25}.
The three forms are equivalent by the definitions of $E_g$ and
$A_g$ and the identity $\nabla g=0$. In particular, subtracting
$\nabla_jR_g/n$ from the first line gives the last line.
These formulas hold for every smooth metric in dimension $n\ge3$.
In particular, $E_g=0$ implies $\dd R_g=0$ on each connected component.

For a symmetric two-tensor $T$ and a smooth function $f$,
\begin{equation}\label{id:tensor-product}
 \operatorname{div}(T\nabla f)
 =\langle\operatorname{div}T,\nabla f\rangle_g
   +\langle T,\nabla^2f\rangle_g.
\end{equation}
When $M$ is closed, the divergence theorem and integration by parts give
\begin{equation}\label{id:green}
 \int_M\operatorname{div}X\,\dd v_g=0,\qquad
 \int_M f\Delta h\,\dd v_g
 =-\int_M\langle\nabla f,\nabla h\rangle_g\,\dd v_g.
\end{equation}

We first recall the general conformal transformation laws.
For a smooth function $f$ and $\widehat g=e^{2f}g$, the basic
transformation laws, with all derivatives on the right taken with
respect to $g$, are given in \cite[Chapter~1]{Besse87}:
\begin{align}
 A_{\widehat g}&=A_g-\nabla_g^2 f+\dd f\otimes\dd f
                   -\frac12|\nabla_g f|_g^2g,
                   \label{id:conformal-schouten}\\
 \sigma_1(A_{\widehat g})&=e^{-2f}\left(
       \sigma_1(A_g)-\Delta_g f-\frac{n-2}{2}|\nabla_g f|_g^2\right),
                   \label{id:conformal-trace}\\
 \Delta_{\widehat g}h&=e^{-2f}\left(
       \Delta_g h+(n-2)\langle\nabla_g f,\nabla_g h\rangle_g\right),
                   \label{id:conformal-laplacian}\\
 \dd v_{\widehat g}&=e^{nf}\dd v_g.
                   \label{id:conformal-volume}
\end{align}
Taking the trace of~\eqref{id:conformal-schouten} with respect to
$\widehat g$ gives~\eqref{id:conformal-trace}. The last two formulas
follow from the inverse metric and volume density. The formulas
specialized to $g=u^2g_0$ with $g_0$ Einstein are recorded next.

In the notation fixed above, the Schouten decomposition and the
scalar form of the mixed curvature read as follows; see
Li--Wei~\cite{LiWei25}:
\begin{align}
 \sigma_2(A_g)&=\frac{R_g^2}{8n(n-1)}-\frac{|E_g|^2}{2(n-2)^2},
       \label{id:sigma}\\
 2(n-1)I_a(g)&=-\Delta R_g-\frac{(n-1)(a+4)}{(n-2)^2}|E_g|^2
       +\frac{(n-1)a+n^2-4}{4n(n-1)}R_g^2.
       \label{id:mixed-curvature}
\end{align}
Li--Wei~\cite{LiWei25} also record the trace-free and trace conformal
identities:
\begin{align}
 (\nabla^2 u)^\circ&=-\frac{1}{n-2}uE_g,\label{id:hess}\\
 \Delta u&=\frac{n}{2}u^{-1}|\nabla u|^2-\frac{1}{2(n-1)}uR_g
                  +\frac{1}{2(n-1)}u^{-1}R_{g_0}.\label{id:trace}
\end{align}
All derivatives in these formulas use the target metric $g$.
Indeed, applying \eqref{id:conformal-schouten} to $g_0=u^{-2}g$ gives
\[
 \frac{R_{g_0}}{2n(n-1)}u^{-2}g
 =A_g+u^{-1}\nabla^2u-\frac12u^{-2}|\nabla u|^2g.
\]
Its trace-free part and its trace in $g$ give
\eqref{id:hess} and \eqref{id:trace}, respectively.

On a closed manifold $M$, the classical Obata identity
\cite{LiWei25} is
\begin{equation}\label{id:obata}
 \int_Mu|E_g|^2\,\dd v_g
 =\frac{(n-2)^2}{2n}
        \int_M\langle\nabla R_g,\nabla u\rangle_g\,\dd v_g.
\end{equation}
To obtain \eqref{id:obata}, apply \eqref{id:tensor-product}
to $E_g\nabla u$, use \eqref{id:bianchi-tracefree} and
\eqref{id:hess}, and integrate:
\[
 0=\frac{n-2}{2n}\int_M\langle\nabla R_g,\nabla u\rangle_g\,\dd v_g
   -\frac1{n-2}\int_Mu|E_g|^2\,\dd v_g.
\]
No scalar-curvature sign assumption is needed.

\subsection{Introducing a constant reference curvature}
The first Newton tensor and its divergence are
\begin{equation}\label{id:newton}
 T_1(A_g)\coloneqq \sigma_1(A_g)g-A_g=\frac{R_g}{2n}g-\frac{E_g}{n-2},
 \qquad \operatorname{div}T_1(A_g)=0.
\end{equation}
Indeed, \eqref{id:bianchi-schouten} gives in components
\[
 \nabla^iT_1(A_g)_{ij}
   =\nabla_j\sigma_1(A_g)-\nabla^i(A_g)_{ij}=0.
\]
This divergence identity holds for every smooth metric.
Introducing a real constant $r$ yields the factor $R_g-r$, which
is nonnegative for the choice $r\coloneqq \min_MR_g$.

\begin{proposition}[Newton identity with a reference curvature]\label{id:shifted}
For every real constant $r$,
\begin{equation}\label{id:shifted-formula}
\begin{split}
 \operatorname{div}\!\left[(n-1)\left(T_1(A_g)-\frac{r}{2n}g\right)\nabla u\right]={}&
 \frac{R_g-r}{4n}\left(n(n-1)u^{-1}|\nabla u|^2+ur+u^{-1}R_{g_0}\right)\\
 &+u\left(\frac{r^2}{4n}-2(n-1)\sigma_2(A_g)\right).
\end{split}
\end{equation}
\end{proposition}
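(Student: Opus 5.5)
Apply the product rule \eqref{id:tensor-product} to the symmetric tensor $T\coloneqq(n-1)\bigl(T_1(A_g)-\frac{r}{2n}g\bigr)$. Since $\operatorname{div}T_1(A_g)=0$ by \eqref{id:newton} and $r$ is constant, $\operatorname{div}T=0$. The left side of \eqref{id:shifted-formula} is therefore $\langle T,\nabla^2u\rangle_g$. Using $T_1(A_g)=\frac{R_g}{2n}g-\frac{E_g}{n-2}$, split this into trace and trace-free parts:
\[
 \langle T,\nabla^2u\rangle_g=\frac{(n-1)(R_g-r)}{2n}\Delta u-\frac{n-1}{n-2}\langle E_g,(\nabla^2u)^\circ\rangle_g .
\]

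Next, substitute the two conformal identities. By \eqref{id:hess}, the trace-free term equals $\frac{n-1}{(n-2)^2}u|E_g|^2$. By \eqref{id:trace}, the trace term equals
\[
 \frac{R_g-r}{4n}\Bigl(n(n-1)u^{-1}|\nabla u|^2-uR_g+u^{-1}R_{g_0}\Bigr).
\]
Comparing with the target, write $-uR_g=ur-u(R_g+r)$ inside the bracket. The first bracket on the right of \eqref{id:shifted-formula} then appears exactly, with remainder $-\frac{u}{4n}(R_g-r)(R_g+r)=-\frac{u}{4n}(R_g^2-r^2)$.

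It remains to verify that
\[
 -\frac{u}{4n}R_g^2+\frac{n-1}{(n-2)^2}u|E_g|^2=-2(n-1)u\,\sigma_2(A_g).
\]
This follows from \eqref{id:sigma}: indeed $2(n-1)\sigma_2(A_g)=\frac{R_g^2}{4n}-\frac{(n-1)|E_g|^2}{(n-2)^2}$. Combining this with the $\frac{ur^2}{4n}$ term produces the last term of \eqref{id:shifted-formula}.

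The argument is purely algebraic and uses no compactness or curvature equation. The main risk is bookkeeping: the sign conventions in \eqref{id:hess} and \eqref{id:trace}, and the fact that all operations are with respect to $g$. The only nontrivial step is the regrouping of $-uR_g(R_g-r)$ into $ur(R_g-r)$ plus the quadratic remainder that combines into $\sigma_2$. I would recheck that step by setting $r=0$, where it should reduce to the unshifted Newton identity.
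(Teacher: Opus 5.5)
Your proposal is correct and follows the paper's proof essentially step for step. Both use the product rule with the divergence-free shifted Newton tensor and split off the trace-free part via \eqref{id:hess}, then substitute \eqref{id:trace} and \eqref{id:sigma}; your explicit regrouping $-uR_g=ur-u(R_g+r)$ is the bookkeeping the paper leaves implicit, and it checks out.
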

\begin{proof}
The calculation is pointwise, with all derivatives and contractions
taken with respect to $g$. Since $r$ is constant and $\nabla g=0$,
\eqref{id:newton} gives
\[
 \operatorname{div}\left(T_1(A_g)-\frac r{2n}g\right)=0,
 \qquad
 T_1(A_g)-\frac r{2n}g
 =\frac{R_g-r}{2n}g-\frac{E_g}{n-2}.
\]
The tensor product rule~\eqref{id:tensor-product} therefore yields
\begin{align*}
 &\operatorname{div}\!\left[(n-1)
       \left(T_1(A_g)-\frac r{2n}g\right)\nabla u\right]\\
 &\quad=(n-1)\left\langle T_1(A_g)-\frac r{2n}g,
                              \nabla^2u\right\rangle_g\\
 &\quad=\frac{n-1}{2n}(R_g-r)\Delta u
       -\frac{n-1}{n-2}\langle E_g,\nabla^2u\rangle_g\\
 &\quad       =\frac{n-1}{2n}(R_g-r)\Delta u
       +\frac{n-1}{(n-2)^2}u|E_g|^2,
\end{align*}
where the last equality uses~\eqref{id:hess} and the fact that
$E_g$ is trace-free. Substituting~\eqref{id:trace} and
\eqref{id:sigma} into this equality gives~\eqref{id:shifted-formula}.

\end{proof}

\Needspace{6\baselineskip}
\begin{proposition}[Pointwise mixed-curvature identity]\label{id:mixed}
Let $(M^n,g_0)$ be a smooth Einstein manifold of dimension $n\ge4$,
let $u\in C^\infty(M)$ be positive, and set $g\coloneqq u^2g_0$.
For every pair of real constants $a$ and $r$, the following identity
holds pointwise on $M$, with all differential operators and
contractions taken with respect to $g$:
\begin{equation}\label{id:mixed-formula}
\begin{split}
 &\operatorname{div}\!\left[u\nabla R_g
 -\frac{(n-1)a+n^2+2n-4}{n-2}E_g\nabla u
 +\frac{(n-1)a+n^2-4}{2n}(R_g-r)\nabla u\right]\\
 &\quad=\frac n{n-2}u|E_g|^2
 +\frac{(n-1)a+n^2-4}{4n(n-1)}(R_g-r)
       \left(n(n-1)u^{-1}|\nabla u|^2+ur+u^{-1}R_{g_0}\right)\\
 &\qquad\quad+u\left[\frac{(n-1)a+n^2-4}{4n(n-1)}r^2-2(n-1)I_a(g)\right].
\end{split}
\end{equation}
No additional differential equation is imposed on $u$ or $g$;
in particular, $I_a(g)$ need not be constant. Neither compactness
nor a sign assumption on $R_g$ or $R_{g_0}$ is required.
If $a$ remains constant and $r\in C^1(M)$ is allowed to vary, add
$-\frac{(n-1)a+n^2-4}{2n}\langle\nabla r,\nabla u\rangle_g$
to the right side.
\end{proposition}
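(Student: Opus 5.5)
The identity is linear in the three vector fields inside the divergence, so I will compute the divergence of each piece separately and then recombine. Write $c\coloneqq\frac{(n-1)a+n^2-4}{4n(n-1)}$, so that the coefficient of the last field is $2(n-1)c$.

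\textbf{The reference-curvature part.} I will not recompute this piece directly. Instead I observe that
\[
\frac{(n-1)a+n^2-4}{2n}(R_g-r)\nabla u-\frac{(n-1)a+n^2-4}{n-2}E_g\nabla u
=4(n-1)c\left(T_1(A_g)-\frac r{2n}g\right)\nabla u ,
\]
using $T_1(A_g)-\frac r{2n}g=\frac{R_g-r}{2n}g-\frac{E_g}{n-2}$. The divergence of this combination is then $4c$ times the right side of Proposition~\ref{id:shifted}. That gives
\[
c(R_g-r)\bigl(n(n-1)u^{-1}|\nabla u|^2+ur+u^{-1}R_{g_0}\bigr)+4c\,u\frac{r^2}{4n}-8(n-1)c\,u\,\sigma_2(A_g).
\]

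\textbf{The leftover $E_g$ term.} After this regrouping, the $E_g\nabla u$ coefficient that remains is
\[
-\frac{(n-1)a+n^2+2n-4}{n-2}+\frac{(n-1)a+n^2-4}{n-2}=-\frac{2n}{n-2}.
\]
By \eqref{id:tensor-product}, \eqref{id:bianchi-tracefree} and \eqref{id:hess},
\[
\operatorname{div}(E_g\nabla u)=\frac{n-2}{2n}\langle\nabla R_g,\nabla u\rangle-\frac{u|E_g|^2}{n-2}.
\]
Hence $-\frac{2n}{n-2}\operatorname{div}(E_g\nabla u)=-\langle\nabla R_g,\nabla u\rangle+\frac{2n}{(n-2)^2}u|E_g|^2$.

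\textbf{The leading term.} Next, $\operatorname{div}(u\nabla R_g)=\langle\nabla u,\nabla R_g\rangle+u\Delta R_g$. The gradient cross term cancels the one produced in the previous step. For $u\Delta R_g$ I substitute \eqref{id:mixed-curvature}:
\[
\Delta R_g=-2(n-1)I_a-\frac{(n-1)(a+4)}{(n-2)^2}|E_g|^2+cR_g^2.
\]

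\textbf{Collecting terms.} It remains to gather the $u|E_g|^2$ and $uR_g^2$ contributions. The $\sigma_2$ term is expanded through \eqref{id:sigma}, which gives
\[
-8(n-1)c\,\sigma_2=-\frac{c}{n}R_g^2+\frac{4(n-1)c}{(n-2)^2}|E_g|^2 .
\]

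The main check, and the only real obstacle, is that these coefficients close up.

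The $|E_g|^2$ coefficient works out as follows. First, $4(n-1)c=\frac{(n-1)a+n^2-4}{n}$. Adding the other two contributions gives
\[
\frac{1}{(n-2)^2}\Bigl[\tfrac{(n-1)a+n^2-4}{n}+2n-(n-1)(a+4)\Bigr].
\]
The bracket should equal $n(n-2)$. Expanding, the numerator over $n$ is $(n-1)a+n^2-4+2n^2-n(n-1)a-4n(n-1)$, which is not obviously $n^2(n-2)$. I will recheck this carefully, since the $a$-dependence must cancel. If it does not cancel, that indicates the term $\frac{r^2}{4n}$ weighting or the coefficient of $E_g\nabla u$ needs to be recombined differently. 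The fallback is to use the $\sigma_2$ form of $I_a$, namely \eqref{intro:structural}, in place of \eqref{id:mixed-curvature}; this makes the $a$-dependence enter only through $(a+4)\sigma_2$ and should cancel exactly against the $\sigma_2$ from Proposition~\ref{id:shifted}.

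\textbf{Checking the $R_g^2$ terms.} The $uR_g^2$ terms should cancel identically, which I will verify: $cR_g^2$ from $\Delta R_g$ against $-\frac cnR_g^2$ and the $R_g^2$ part hidden in $c(R_g-r)ur$ together with $cr^2/n$.

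\textbf{The variable-$r$ remark.} This last part follows because the only place constancy of $r$ was used is $\operatorname{div}((R_g-r)\nabla u)$. When $r$ varies, that divergence picks up an extra $-\langle\nabla r,\nabla u\rangle$, multiplied by $\frac{(n-1)a+n^2-4}{2n}$.
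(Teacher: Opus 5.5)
\textbf{Gap: a factor-of-$n$ error in the regrouping, so the coefficient check fails.} Your decomposition is the paper's: the Obata field $u\nabla R_g-\frac{2n}{n-2}E_g\nabla u$ plus a multiple of the Newton field of Proposition~\ref{id:shifted}. As written, however, the argument does not close.

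Write $B\coloneqq(n-1)a+n^2-4$. The combination you isolate equals $B\bigl(T_1(A_g)-\frac r{2n}g\bigr)\nabla u$, and $B=4n(n-1)c$, not $4(n-1)c$. Its divergence is therefore $\frac{B}{n-1}=4nc$ times the right side of \eqref{id:shifted-formula}, not $4c$ times it. Your display is also internally inconsistent: its first term $c(R_g-r)(\cdots)$ is the one produced by $4nc$, while your $r^2$ and $\sigma_2$ terms were computed with $4c$. With the correct multiplier the reference part is
\begin{gather*}
 c(R_g-r)\bigl(n(n-1)u^{-1}|\nabla u|^2+ur+u^{-1}R_{g_0}\bigr)+c\,ur^2-2B\,u\,\sigma_2(A_g),\\
 -2B\,\sigma_2(A_g)=-cR_g^2+\frac{B}{(n-2)^2}|E_g|^2 .
\end{gather*}
The terms then close as follows.
\begin{itemize}
\item The $|E_g|^2$ bracket is $B+2n-(n-1)(a+4)=n(n-2)$, giving the coefficient $\frac n{n-2}$.
\item The $r^2$ coefficient is $c$, as in \eqref{id:mixed-formula}.
\item The $uR_g^2$ terms cancel directly: $+c\,uR_g^2$ from $u\Delta R_g$ against $-c\,uR_g^2$ from the $\sigma_2$ term.
\end{itemize}
The term $c(R_g-r)ur$ contains no $R_g^2$ and stays unexpanded on the right side of \eqref{id:mixed-formula}. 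So your plan to absorb a leftover $R_g^2$ into it could not have worked.

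\textbf{The fallback does not help.} \eqref{intro:structural} is only an algebraic rewriting of \eqref{id:mixed-curvature}, so substituting it cannot change the outcome. With your multiplier, the resulting $u\,\sigma_2(A_g)$ coefficient $2(n-1)(a+4)-\frac{2B}{n}$ still depends on $a$. Only with $\frac{B}{n-1}$ does it become $2(n-1)(a+4)-2B=-2n(n-4)$, which is independent of $a$.

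You call this coefficient check ``the only real obstacle'' and leave it unresolved with a wrong diagnosis, so the proposal does not yet prove the identity. Once the factor $n$ is corrected, it is exactly the paper's proof. Your remark on variable $r$ is correct.
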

\begin{proof}
The vector field on the left side of~\eqref{id:mixed-formula} equals
\begin{equation}\label{id:current-decomposition}
 u\nabla R_g-\frac{2n}{n-2}E_g\nabla u
 +\bigl((n-1)a+n^2-4\bigr)
        \left(T_1(A_g)-\frac r{2n}g\right)\nabla u.
\end{equation}
For the pointwise identity, apply the product rule
\eqref{id:tensor-product}, the contracted Bianchi identity
\eqref{id:bianchi-tracefree}, and
\eqref{id:hess}:
\[
 \operatorname{div}(E_g\nabla u)
 =\frac{n-2}{2n}\langle\nabla R_g,\nabla u\rangle_g
                          -\frac{1}{n-2}u|E_g|^2.
\]
Consequently, the pointwise Obata identity takes the divergence form
\[
 \operatorname{div}\!\left(u\nabla R_g-\frac{2n}{n-2}E_g\nabla u\right)
 =u\Delta R_g+\frac{2n}{(n-2)^2}u|E_g|^2.
\]
Take the divergence of \eqref{id:current-decomposition}, use Proposition~\ref{id:shifted}, substitute
\eqref{id:mixed-curvature} for $\Delta R_g$, and use~\eqref{id:sigma}.
The $uR_g^2$ terms cancel, and the coefficient of $u|E_g|^2$ becomes
\[
 \frac{2n-(n-1)(a+4)+(n-1)a+n^2-4}{(n-2)^2}
 =\frac n{n-2}.
\]
This gives~\eqref{id:mixed-formula}. For variable $r$, the stated
additional term follows by the product rule.
\end{proof}

\subsection{Relation to Gursky-type identities}\label{id:comparison-section}
The higher-order Case--Gursky--V\'etois identity appears in
\cite[Theorem~2.1]{LiWei25} and, for constant $I_a(g)$, in
\cite[Lemma~3.2]{Case24}. It contains the mixed contraction
$E_g(\nabla R_g,\nabla u)$, whose estimate leads to the parameter
intervals in those papers. The decomposition
\eqref{id:current-decomposition} combines the lower-order Obata
divergence with the Newton identity to avoid the mixed contraction
$E_g(\nabla R_g,\nabla u)$.

In the next section, under the
hypotheses of Theorem~\ref{main:rigidity}, we choose
$r\coloneqq\min_MR_g$. Evaluating the scalar equation at a point
attaining this minimum gives~\eqref{rig:min}. This inequality makes
every term obtained by integrating~\eqref{id:mixed-formula}
nonnegative for $a\ge-4$.
\section{Rigidity from the scalar-curvature minimum}\label{sec:rigidity}

For the scalar equation~\eqref{id:mixed-curvature}, use the
coefficients of Li--Wei~\cite{LiWei25}:
\begin{equation}\label{rig:coeff}
 \alpha_1\coloneqq \frac{(n-1)(a+4)}{(n-2)^2},\qquad
 \alpha_2\coloneqq \frac{(n-1)a+n^2-4}{4n(n-1)}.
\end{equation}
These are the coefficients of $-|E_g|^2$ and $R_g^2$, respectively,
in~\eqref{id:mixed-curvature}; they depend only on $n,a$.
For $n\ge4$ and $a\ge-4$, both coefficients are nonnegative.
Within this range, $\alpha_2=0$ precisely when $n=4$ and $a=-4$.

\begin{proof}[Proof of Theorem~\ref{main:rigidity}]
Set $r\coloneqq \min_MR_g\ge0$. Since $M$ is compact, choose $x_0\in M$
at which the scalar curvature $R_g$ attains its global minimum, so
$R_g(x_0)=r$, $\nabla_gR_g(x_0)=0$, and $\Delta_gR_g(x_0)\ge0$.
Evaluating~\eqref{id:mixed-curvature} at this point gives
\begin{equation}\label{rig:min}
 2(n-1)\Lambda=-\Delta R_g(x_0)-\alpha_1|E_g(x_0)|^2+\alpha_2r^2\le \alpha_2r^2.
\end{equation}
Integrating~\eqref{id:mixed-formula} gives
\begin{equation}\label{rig:integral}
\begin{split}
0=\int_M\Bigg\{&\frac n{n-2}u|E_g|^2
 +\alpha_2(R_g-r)\left(n(n-1)u^{-1}|\nabla u|^2+ur+u^{-1}R_{g_0}\right)\\
 &+u\bigl[\alpha_2r^2-2(n-1)\Lambda\bigr]\Bigg\}\,\dd v_g.
\end{split}
\end{equation}
By~\eqref{rig:min} and the hypotheses, every summand is nonnegative,
including when $r=0$ or $\alpha_2=0$. Since $u>0$, the first term
therefore gives $E_g=0$. Equation~\eqref{id:bianchi-tracefree} gives
$0=(n-2)\dd R_g/(2n)$, so $R_g$ is constant by connectedness. 

We next show that this constant is strictly positive.
At a global maximum point of the conformal factor $u$ on $M$,
$\nabla_g u=0$ and $\Delta_g u\le0$; hence~\eqref{id:trace} gives
$R_g\ge R_{g_0}(\max_M u)^{-2}>0$.
Obata's classification~\cite{Obata62,Obata71} gives the stated conclusion.
With $E_g=0$ and $R_g$ constant,~\eqref{id:mixed-curvature} gives
the asserted value of $\Lambda$.
\end{proof}

\begin{corollary}[Mixed-curvature quotients]\label{rig:quotient}
Let $(M^n,g_0)$ be closed and connected, $n\ge4$, with $g_0$
Einstein and $R_{g_0}>0$. Suppose $g=u^2g_0$, with $u>0$ smooth, satisfies
\[
 R_g>0,\qquad I_a(g)=\Lambda R_g^\theta,\qquad a\ge-4,\quad \theta\le1.
\]
Here $\Lambda$ and $\theta$ are real constants.
Then $g$ has the form described in Theorem~\ref{main:rigidity}, and
\begin{equation}\label{clo:mixed-scale}
 \Lambda=\frac{(n-1)a+n^2-4}{8n(n-1)^2}R_g^{\,2-\theta}.
\end{equation}
In particular, $\Lambda>0$ unless $n=4$ and $a=-4$, in which case
$\Lambda=0$.
\end{corollary}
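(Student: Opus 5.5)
The plan is to rerun the proof of Theorem~\ref{main:rigidity} with the constant $\Lambda$ replaced by the function $\Lambda R_g^\theta$. The only new work is to check that the last group of terms in the integrated identity is still pointwise nonnegative. As before, set $r\coloneqq\min_MR_g$, which is now strictly positive because $R_g>0$ and $M$ is compact. Choose $x_0$ with $R_g(x_0)=r$. There $\nabla R_g(x_0)=0$ and $\Delta R_g(x_0)\ge0$, so evaluating \eqref{id:mixed-curvature} at $x_0$ gives the analogue of \eqref{rig:min}:
\[
 2(n-1)\Lambda r^\theta\le\alpha_2r^2,\qquad\text{that is,}\qquad 2(n-1)\Lambda\le\alpha_2r^{2-\theta}.
\]
Here $r^\theta$ makes sense for every real $\theta$ because $r>0$.

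Next I integrate the pointwise identity \eqref{id:mixed-formula} of Proposition~\ref{id:mixed} with this constant $r$, substituting $I_a(g)=\Lambda R_g^\theta$. The divergence integrates to zero, and it is convenient to regroup the $ur$ part of the middle term with the last bracket:
\[
 0=\int_M\Bigl\{\tfrac n{n-2}u|E_g|^2+\alpha_2(R_g-r)\bigl(n(n-1)u^{-1}|\nabla u|^2+u^{-1}R_{g_0}\bigr)
 +u\bigl[\alpha_2rR_g-2(n-1)\Lambda R_g^\theta\bigr]\Bigr\}\,\dd v_g,
\]
using $\alpha_2(R_g-r)r+\alpha_2r^2=\alpha_2rR_g$. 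The first two summands are nonnegative since $\alpha_2\ge0$ (because $a\ge-4$), $R_g\ge r$, and $R_{g_0}>0$.

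The key step is the pointwise bound
\[
 2(n-1)\Lambda R_g^\theta\le\alpha_2rR_g.
\]
This is the place where $\theta\le1$ is used, and it is the step I expect to be the main obstacle, because a naive comparison of $R_g^\theta$ with $r^\theta$ goes the wrong way when $\theta>0$. I split according to the sign of $\Lambda$.

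If $\Lambda\le0$, the left side is nonpositive and the right side is nonnegative, so the bound holds.

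If $\Lambda>0$, the inequality at $x_0$ forces $\alpha_2>0$. It then gives
\[
 2(n-1)\Lambda R_g^\theta\le\alpha_2r^{2-\theta}R_g^\theta=\alpha_2rR_g\,(r/R_g)^{1-\theta}\le\alpha_2rR_g,
\]
since $0<r/R_g\le1$ and $1-\theta\ge0$.

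Hence every summand in the integral is nonnegative, and each must vanish. The first summand gives $E_g=0$. Then \eqref{id:bianchi-tracefree} and connectedness show that $R_g$ is constant, and it is positive by hypothesis. Obata's classification \cite{Obata62,Obata71} gives the form stated in Theorem~\ref{main:rigidity}.

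Finally, \eqref{id:mixed-curvature} with $E_g=0$ and $R_g$ constant reads $2(n-1)\Lambda R_g^\theta=\alpha_2R_g^2$. This yields \eqref{clo:mixed-scale}. Since $R_g>0$, $\Lambda$ has the sign of $\alpha_2$, so $\Lambda>0$ except when $\alpha_2=0$, which happens exactly for $n=4$, $a=-4$, and then $\Lambda=0$.
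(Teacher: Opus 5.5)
Your proof is correct and follows the paper's argument. It uses the same choice $r=\min_M R_g>0$, the same evaluation of \eqref{id:mixed-curvature} at a minimum point, the same regrouping of the integrated identity \eqref{id:mixed-formula}, and the same use of $\theta\le1$ to compare $R_g^{\theta-1}$ with $r^{\theta-1}$. The only difference is organizational: you split on the sign of $\Lambda$ rather than on whether $\alpha_2=0$. This lets you absorb the case $n=4$, $a=-4$ into the same integral identity, whereas the paper treats it separately via $\int_M\Delta_g R_g\,\dd v_g=0$ and the Obata identity \eqref{id:obata}.
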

\begin{proof}
If $\alpha_2=0$, then $n=4$, $a=-4$, and $\alpha_1=0$.
Integrating $-\Delta R_g=2(n-1)\Lambda R_g^\theta$ and using
$R_g^\theta>0$ gives $\Lambda=0$.
The function $R_g$ is then harmonic and hence constant, and
\eqref{id:obata} gives $E_g=0$. Obata's theorem gives the classification
in this case.
Suppose $\alpha_2>0$ and set $r\coloneqq \min_MR_g>0$. Choose $x_0\in M$
at which $R_g$ attains its global minimum, so $R_g(x_0)=r$ and
$\Delta_gR_g(x_0)\ge0$. Evaluating~\eqref{id:mixed-curvature} at $x_0$
and dividing by $r>0$ gives
$2(n-1)\Lambda r^{\theta-1}\le\alpha_2r$. Rearranging the scalar terms in
\eqref{id:mixed-formula} gives
\begin{equation}\label{rig:quotient-integral}
\begin{split}
0=\int_M\Bigg\{&\frac n{n-2}u|E_g|^2
 +\alpha_2(R_g-r)\left(n(n-1)u^{-1}|\nabla u|^2+u^{-1}R_{g_0}\right)\\
 &+uR_g\bigl[\alpha_2r-2(n-1)\Lambda R_g^{\theta-1}\bigr]\Bigg\}\,\dd v_g.
\end{split}
\end{equation}
If $\Lambda\le0$, the last bracket is strictly positive, contradicting
\eqref{rig:quotient-integral}. Thus $\Lambda>0$.
Since $\theta\le1$ and $R_g\ge r>0$, we have
$R_g^{\theta-1}\le r^{\theta-1}$. Together with the inequality at the
minimum of $R_g$, this makes every summand nonnegative and gives
$E_g=0$. The contracted Bianchi identity then implies that $R_g$ is constant.
Obata's theorem gives the classification, and substitution into
\eqref{id:mixed-curvature} yields~\eqref{clo:mixed-scale}.
\end{proof}

\begin{proposition}[Four-dimensional rigidity without a scalar-curvature sign assumption]\label{four:unsigned}
Let $(M^4,g_0)$ be closed and connected, with $g_0$ Einstein and $R_{g_0}>0$.
If a smooth metric $g=u^2g_0$, $u>0$, satisfies
$I_a(g)=\Lambda$ for a real constant $\Lambda$ and $-4\le a\le-4/3$,
then $R_g>0$ and $g$ has the
form described in Theorem~\ref{main:rigidity}.
\end{proposition}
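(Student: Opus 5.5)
Throughout, $n=4$. Then the coefficients \eqref{rig:coeff} become $\alpha_1=\tfrac34(a+4)$ and $\alpha_2=\tfrac{a+4}{16}$, and \eqref{id:mixed-curvature} reads
\[
 6I_a(g)=-\Delta R_g-\alpha_1|E_g|^2+\alpha_2R_g^2 .
\]
The plan is to reduce to Theorem~\ref{main:rigidity}: once I know $R_g\ge0$, that theorem applies verbatim (it only needs $a\ge-4$) and gives both $R_g>0$ and the classification. So the whole task is to establish the sign of $R_g$ from $a\le-4/3$ and the positivity of the conformal class of $g_0$.

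\emph{Endpoint $a=-4$.} Here $\alpha_1=\alpha_2=0$, so the equation is $-\Delta R_g=6\Lambda$. Integrating over the closed manifold $M$ gives $\Lambda=0$. Hence $R_g$ is harmonic and therefore constant. The Obata identity \eqref{id:obata} then gives $\int_M u|E_g|^2\,\dd v_g=0$, so $E_g=0$. The maximum-of-$u$ argument in the proof of Theorem~\ref{main:rigidity}, using \eqref{id:trace}, gives $R_g>0$, and Obata's theorem finishes this case.

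\emph{Case $-4<a\le-4/3$.} First I compute $\Lambda$. In dimension four, both $\int_M Q_g\,\dd v_g$ and $\int_M\sigma_2(A_g)\,\dd v_g$ are conformally invariant. Therefore
\[
 \Lambda\Vol_g(M)=\int_M I_a(g_0)\,\dd v_{g_0}=\frac{\alpha_2}{6}R_{g_0}^2\Vol_{g_0}(M)>0 .
\]
Consequently $R_g$ satisfies
\[
 -\Delta R_g+\alpha_2R_g^2=6\Lambda+\alpha_1|E_g|^2>0,\qquad 0<\alpha_2\le\tfrac16 ,
\]
and the upper bound $\alpha_2\le 1/6$ is exactly $a\le-4/3$.

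To turn this into $R_g>0$, I follow Gursky's argument for positivity of scalar curvature in the positive Yamabe class. The conformal class of $g_0$ has positive Yamabe invariant, since it contains the Einstein metric with $R_{g_0}>0$.
\begin{itemize}
\item Suppose $r=\min_M R_g\le0$.
\item Consider a path of auxiliary fourth-order problems connecting $g_0$ to $g$, for instance $-\Delta R+sR^2=\text{positive}$ with $s\in[\alpha_2,1/6]$, or a continuity path in the conformal factor.
\item Show that the set of parameters at which $R>0$ is open and closed.
\end{itemize}
Closedness is the key step. At a first time where $\min R=0$, the minimum point satisfies $-\Delta R\le0$ and $R^2=0$ there. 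This contradicts strict positivity of the right-hand side. The role of $\alpha_2\le1/6$ is to keep the quadratic term dominated by the conformal Laplacian $-6\Delta+R$. I would test the equation against powers of $R_-$ and use the Yamabe inequality to rule out a region where $R_g\le0$.

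\emph{Main obstacle.} The hard step is this positivity argument. The direct approach via \eqref{rig:integral} with $r=\min R_g<0$ fails, because the terms $\alpha_2(R_g-r)ur$ and $\alpha_2 r^2 u$ combine to $\alpha_2urR_g$, which has no sign. My first attempt would therefore be the Gursky-type deformation/Yamabe argument above. Failing that, I would try choosing the reference constant in Proposition~\ref{id:mixed} to be $r=\max_M R_g$ or a variable $r$, exploiting $\alpha_2\le1/6$ to absorb the indefinite terms.
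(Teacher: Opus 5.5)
Your endpoint case $a=-4$ is correct. So is your proof that $\Lambda>0$ for $-4<a\le-4/3$: in dimension four $\int_M\sigma_2(A_g)\,\dd v_g$, and hence $\int_M Q_g\,\dd v_g$, is conformally invariant, which is exactly the total-curvature identity \eqref{id:four-total}. This is a clean substitute for the paper's route through the integrated $r=0$ identity \eqref{id:four-weighted}.

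The gap is the step the proposition is really about: you never establish $R_g\ge0$, so the reduction to Theorem~\ref{main:rigidity} is not carried out. The continuity scheme cannot be set up as stated. You are given one fixed solution $g$, and nothing provides a family of solutions joining it to $g_0$, whether of $-\Delta R+sR^2=\text{positive}$ or of any equation along a path of conformal factors. Openness would need an existence and deformation theory that is not available. Your closedness argument only prevents the minimum from reaching $0$ along such a path. It also works for every $\alpha_2>0$, so the hypothesis $a\le-4/3$ never enters, which signals that this is not the mechanism. A direct minimum principle for $R_g$ fails as well: at a negative minimum $x_0$ it only gives $\alpha_2R_g(x_0)^2\ge6\Lambda$.

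The missing idea is to absorb the quadratic term into the conformal Laplacian, which is precisely where $\alpha_2\le1/6$ is used:
\[
 \Bigl(-\Delta_g+\frac{R_g}{6}\Bigr)R_g
 =6\Lambda+\alpha_1|E_g|^2+\Bigl(\frac16-\alpha_2\Bigr)R_g^2>0.
\]
The paper then uses conformal covariance in dimension four, $\bigl(-\Delta_{g_0}+\frac{R_{g_0}}6\bigr)(uR_g)=u^3\bigl(-\Delta_g+\frac{R_g}6\bigr)R_g$. Thus $uR_g$ is a strict supersolution for an operator with positive zeroth-order coefficient $R_{g_0}/6$, so $uR_g$ cannot have a nonpositive minimum.

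Your idea of testing against the negative part also works once this identity is written down. Set $w\coloneqq\max\{-R_g,0\}$ and integrate the display against $w$. If $w\not\equiv0$, this gives $-\int_M\bigl(|\nabla w|^2+\frac{R_g}6w^2\bigr)\,\dd v_g>0$. That contradicts positivity of the quadratic form of $-\Delta_g+R_g/6$, which holds in the conformal class of $g_0$ because $R_{g_0}>0$; no Yamabe--Sobolev inequality is needed. As written, though, the proposal only gestures at this, so $R_g>0$ remains unproved.
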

\begin{proof}
Apply Proposition~\ref{id:mixed} with $n=4$ and $r=0$ and integrate
the resulting identity over $M$. This gives
\[
 \begin{split}
 6\Lambda\int_Mu\,\dd v_g
 ={}&2\int_Mu|E_g|^2\,\dd v_g\\
 &+\frac{a+4}{16}\int_MR_g
       \left(12u^{-1}|\nabla u|^2+u^{-1}R_{g_0}\right)\,\dd v_g.
 \end{split}
\]
The trace formula~\eqref{id:trace} gives
\[
 12u^{-1}|\nabla u|^2+u^{-1}R_{g_0}=6\Delta_g u+uR_g,
\]
while integration by parts and the Obata identity~\eqref{id:obata}
give
\[
 \int_MR_g\Delta_g u\,\dd v_g
 =-\int_M\langle\nabla R_g,\nabla u\rangle_g\,\dd v_g
 =-2\int_Mu|E_g|^2\,\dd v_g.
\]
Substituting these two equalities yields
\begin{equation}\label{id:four-weighted}
 6\Lambda\int_Mu\,\dd v_g
 =\frac{-3a-4}{4}\int_Mu|E_g|^2\,\dd v_g
  +\frac{a+4}{16}\int_MuR_g^2\,\dd v_g.
\end{equation}
At a global maximum point of $u$, \eqref{id:trace} gives
$R_g\ge u^{-2}R_{g_0}>0$. Hence
$\int_MuR_g^2\,\dd v_g>0$.

If $a=-4$, integration of~\eqref{id:mixed-curvature} gives
$\Lambda=0$, and \eqref{id:four-weighted} then gives $E_g=0$.
The contracted Bianchi identity implies that $R_g$ is constant,
so the preceding observation gives $R_g>0$.
Now suppose $-4<a\le-4/3$. Both coefficients on the right side
of~\eqref{id:four-weighted} are nonnegative, and the second term
is strictly positive. Thus $\Lambda>0$. The scalar equation reads
\begin{equation}\label{id:four-scalar}
 -\Delta_g R_g=6\Lambda+\frac{3(a+4)}4|E_g|^2-\frac{a+4}{16}R_g^2.
\end{equation}
Consequently,
\[
 \left(-\Delta_g+\frac{R_g}{6}\right)R_g
 =6\Lambda+\frac{3(a+4)}4|E_g|^2+\frac{-3a-4}{48}R_g^2>0,
\]
because $\Lambda>0$, $a+4>0$, and $-3a-4\ge0$.
The conformal covariance of the conformal Laplacian in dimension
four \cite[Section~2]{LeeParker87} gives
\[
 \left(-\Delta_{g_0}+\frac{R_{g_0}}6\right)(uR_g)
 =u^3\left(-\Delta_g+\frac{R_g}6\right)R_g>0.
\]
If the minimum of $uR_g$ were nonpositive, then at a point
attaining it the left side would be nonpositive, since
$R_{g_0}>0$. This contradiction proves $R_g>0$.
Theorem~\ref{main:rigidity} applies in both cases.
\end{proof}

\section{The radial equation and smooth spherical poles}\label{sec:radial}

This section and Section~\ref{sec:high-neck} construct the examples
in Theorem~\ref{main:sharpness}. The four-dimensional cylindrical
system and its transport law are taken from
Gursky--Malchiodi~\cite[Sections~4.1 and~5.2]{GM12}.
Lemma~\ref{ode:reduction} records these formulas and proves their
higher-dimensional extension, together with the reconstruction
of solutions to the curvature equation. We then construct local
solutions that extend smoothly across spherical poles.

Set $B\coloneqq(n-1)a+n^2-4$. Equation~\eqref{id:mixed-curvature} and $E_g=(n-2)A_g^\circ$ give
\begin{equation}\label{ode:scalar-form}
 I_a(g)=-\Delta_g \sigma_1(A_g)-\frac{a+4}{2}|A_g^\circ|_g^2+\frac{B}{2n}\sigma_1(A_g)^2.
\end{equation}
Throughout Sections~\ref{sec:radial} and~\ref{sec:high-neck},
$n\ge4$ is fixed. We first reduce $I_a(g)=\Lambda$ to a radial
ODE system and construct local solutions whose metrics extend
smoothly across a pole. Section~\ref{sec:high-neck} then adjusts
one initial value at a symmetric neck so that the trajectory's
intersection with the common section lies on the appropriate
curve of initial data; reflection supplies the other half of
the sphere.

\subsection{Coordinates and the reduced equation}
Away from the two poles, the round metric on the unit sphere is
\[
 g_{\Sph^n}=\dd\theta^2+\sin^2\theta\,g_{\Sph^{n-1}},
 \qquad 0<\theta<\pi.
\]
Introduce the coordinate $t:=\log\tan(\theta/2)$, which ranges over
$\R$. Since $\theta(t)=2\arctan(e^t)$, we have
\[
 \dd\theta=\sin\theta\,\dd t,\qquad
 \sin\theta=\frac{2e^t}{1+e^{2t}}=\sech t=\frac{1}{\cosh t}.
\]
Substituting these identities into the polar expression gives
\[
 g_{\Sph^n}
 =\sin^2\theta\bigl(\dd t^2+g_{\Sph^{n-1}}\bigr)
 =\sech^2t\bigl(\dd t^2+g_{\Sph^{n-1}}\bigr).
\]
Thus the sphere with its two poles removed is conformal to the
product cylinder $\R\times\Sph^{n-1}$.

Now let $g=u(\theta)^2g_{\Sph^n}$, where $u>0$ depends only on
$\theta$. In the same coordinates,
\[
 g=\bigl(u(\theta(t))\sech t\bigr)^2
   \bigl(\dd t^2+g_{\Sph^{n-1}}\bigr).
\]
Define $v(t)$ by $e^{v(t)}=u(\theta(t))\sech t$. Taking the
logarithm of this positive factor yields
\begin{equation}\label{cylinder}
 g=e^{2v(t)}(\dd t^2+g_{\Sph^{n-1}}),\qquad
 v(t):=\log u(\theta(t))-\log\cosh t.
\end{equation}
The limits $t\to-\infty$ and $t\to+\infty$ correspond to the
poles $\theta=0$ and $\theta=\pi$, respectively. For the round
metric, $u\equiv1$, so
\begin{align}\label{09092223}
v_{\rm rd}(t)=-\log\cosh t.
\end{align}
A prime denotes differentiation in $t$. Set
\begin{equation}\label{variables}
 x\coloneqq-v',\quad y\coloneqq x',\quad z\coloneqq y',\quad q\coloneqq1-x^2,\quad
 J\coloneqq y+\frac{n-2}{2}q.
\end{equation}
The next lemma expresses the curvatures of $g$ in these variables,
which will turn $I_a(g)=\Lambda$ into an ODE.
\begin{lemma}[Radial curvature formulas]\label{ode:curvature-formulas}
Let $g$ be a smooth metric of the form~\eqref{cylinder} for $t$
in an open interval, with $x,y,z,q,J$ as in~\eqref{variables}.
Let $K_{\rm rad}$ denote the sectional curvature of $g$ on a
two-plane containing the radial direction $\partial_t$, and let
$K_{\rm tan}$ denote the sectional curvature of $g$ on a two-plane
tangent to a slice $\{t\}\times\Sph^{n-1}$. By rotational symmetry,
these values depend only on $t$.
The Schouten eigenvalues in the radial and tangential directions are
$e^{-2v}(y-q/2)$ and $e^{-2v}q/2$, respectively, and
\begin{equation}\label{radial-curvatures}
\begin{gathered}
 \sigma_1(A_g)=e^{-2v}J,\qquad K_{\rm rad}=e^{-2v}y,\qquad K_{\rm tan}=e^{-2v}q,\\
 e^{4v}|A_g^\circ|_g^2=\frac{n-1}{n}(y-q)^2,\qquad
 e^{4v}\sigma _2(A_g)=(n-1)\left(\frac{qy}{2}+\frac{n-4}{8}q^2\right).
\end{gathered}
\end{equation}
For every smooth radial function $f$,
$\Delta_g f=e^{-2v}(f''-(n-2)xf')$. Moreover,
\begin{equation}\label{radial:mixed-curvature}
 \begin{split}
 e^{4v}I_a(g)={}&-z'+2(n-4)xz+\frac{3(n-4)}2y^2 +(-n^2+10n-20)x^2y+\left(\frac B2-2n+6\right)qy\\
 &+(n-4)(n-2)x^2q+\frac{(n-4)(B+4)}8q^2.
 \end{split}
\end{equation}
\end{lemma}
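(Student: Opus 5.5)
The plan is to compute everything from the conformal transformation law \eqref{id:conformal-schouten}, applied with $f=v$ to the product metric $\bar g=\dd t^2+g_{\Sph^{n-1}}$. Then $g=e^{2v}\bar g$. The product metric has Schouten tensor
\[
 A_{\bar g}=\tfrac12\bigl(g_{\Sph^{n-1}}-\dd t^2\bigr).
\]
To see this, note that $\Ric_{\bar g}=(n-2)g_{\Sph^{n-1}}$ and $R_{\bar g}=(n-1)(n-2)$, and substitute into the definition of $A$.

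For a radial $v$, the $\bar g$-Hessian is $\nabla^2v=v''\,\dd t^2$, because $\partial_t$ is parallel and the slices are totally geodesic. Also $|\nabla v|^2=v'^2=x^2$. Substituting into \eqref{id:conformal-schouten} gives the two blocks.
\begin{itemize}
\item[] Radial block: $-\tfrac12-v''+v'^2-\tfrac12v'^2=y-\tfrac12(1-x^2)=y-q/2$, using $v''=-y$.
\item[] Tangential block: $\tfrac12-\tfrac12x^2=q/2$, times $g_{\Sph^{n-1}}$.
\end{itemize}
Raising an index with $g=e^{2v}\bar g$ multiplies each block by $e^{-2v}$, which gives the stated Schouten eigenvalues. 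The remaining pointwise formulas are then algebra.
\begin{itemize}
\item[] $\sigma_1=e^{-2v}\bigl(y-q/2+(n-1)q/2\bigr)=e^{-2v}J$.
\item[] $\sigma_2$ is the sum of pairwise products of eigenvalues: $(n-1)(y-q/2)(q/2)+\binom{n-1}2q^2/4$. This equals the claimed $(n-1)\bigl(qy/2+(n-4)q^2/8\bigr)$.
\item[] $|A^\circ|^2$ comes from a matrix with one eigenvalue $\lambda$ and $n-1$ eigenvalues $\mu$. Its trace-free norm is $\frac{n-1}{n}(\lambda-\mu)^2$, with $\lambda-\mu=e^{-2v}(y-q)$.
\item[] The sectional curvatures follow from the curvature decomposition for zero Weyl tensor: $K(e_i,e_j)=\lambda_i+\lambda_j$. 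Thus $K_{\rm rad}=(y-q/2)+q/2=y$ and $K_{\rm tan}=q$, each times $e^{-2v}$. Local conformal flatness of $g$ is clear, since $\bar g$ is conformally flat.
\end{itemize}

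The Laplacian formula follows from \eqref{id:conformal-laplacian} with $\widehat g=g$, $f=v$, and $\Delta_{\bar g}h=h''$. This gives $e^{-2v}\bigl(h''+(n-2)v'h'\bigr)=e^{-2v}\bigl(h''-(n-2)xh'\bigr)$.

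For \eqref{radial:mixed-curvature}, I will use \eqref{ode:scalar-form}, which gives
\[
 e^{4v}I_a=-e^{4v}\Delta_g(e^{-2v}J)-\tfrac{a+4}2\cdot\tfrac{n-1}n(y-q)^2+\tfrac B{2n}J^2.
\]
First, $(e^{-2v}J)'=e^{-2v}(J'+2xJ)$. Applying the Laplacian formula then gives
\[
 e^{4v}\Delta_g(e^{-2v}J)=(J'+2xJ)'+2x(J'+2xJ)-(n-2)x(J'+2xJ).
\]
Next I expand using $J'=z-(n-2)xy$, since $q'=-2xy$, and
\[
 J''=z'-(n-2)(y^2+xz).
\]
Finally I eliminate $a$ through $a+4=\bigl(B-(n-2)^2\bigr)/(n-1)$. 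With this substitution, the combination $-\tfrac{B-(n-2)^2}{2n}(y-q)^2+\tfrac B{2n}(y+\tfrac{n-2}2q)^2$ simplifies. The $By^2$ terms cancel, leaving $\frac{B}{2}qy+\frac{B(n-4)}{8}q^2$ plus $B$-free terms $\frac{(n-2)^2}{2n}(y-q)^2$.

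The main obstacle is this bookkeeping. The raw expansion contains terms in $x^2y$, $x^2q$, $x^4$, $y^2$, $qy$ and $q^2$, and they are not independent, because $x^2=1-q$. Any $x^4$ and extra $x^2y$ pieces must be rewritten via $x^2=1-q$ so that the result matches the stated normal form. The $q^2$ coefficient $\frac{(n-4)(B+4)}8$ also absorbs contributions from $x^2q=q-q^2$ rewriting. I would fix the coefficients by matching the monomials $z'$, $xz$, $y^2$, $x^2y$, $qy$, $x^2q$ and $q^2$ in turn.

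As a consistency check, take the round solution $x=\tanh t$, $y=\sech^2t=q$, $z=-2q\tanh t$. There $e^{4v}I_a$ should equal the constant $\frac{B}{8n(n-1)^2}R^2e^{4v}$ with $R=n(n-1)$, which equals $\frac{nB}8q^2$. This check tests the combination of the coefficients.
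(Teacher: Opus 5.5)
Your plan follows the paper's proof step for step: the cylinder Schouten tensor, the conformal law \eqref{id:conformal-schouten}, eigenvalue algebra, the Weyl--Schouten decomposition, then substitution into \eqref{ode:scalar-form}. Deriving the Laplacian from \eqref{id:conformal-laplacian} instead of the coordinate formula is immaterial. Your $P'+(4-n)xP$ with $P=J'+2xJ$ is exactly the paper's $J''+(6-n)xJ'+2yJ-2(n-4)x^2J$.

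\textbf{The step that fails is the elimination of $a$.} From $B=(n-1)a+n^2-4$ one gets $(n-1)(a+4)=B-n(n-4)$, not $B-(n-2)^2$. With your identity the $B$-free remainder is $\frac{(n-2)^2}{2n}(y-q)^2$ instead of $\frac{n-4}{2}(y-q)^2$. Your final expression therefore exceeds the true one by $\frac{2}{n}(y-q)^2$, and the coefficients of $y^2$, $qy$, $q^2$ cannot come out as stated. No use of $x^2=1-q$ can repair this, since the $y^2$ term is untouched by that relation.

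Your round consistency check cannot detect the mistake, because $y=q$ (that is, $A_g^\circ=0$) along the round trajectory. A test that does detect it is the scalar-flat neck \eqref{neck} with $B=n(n-4)$, the value at $a=-4$. There $J\equiv0$ forces $I_{-4}(g)=0$, while $y-q=-\frac{n}{2}q\neq0$.

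With the correct identity the computation closes with no rewriting beyond $q'=-2xy$, and no $x^4$ terms arise. Expand
$-\bigl[J''+(6-n)xJ'+2yJ-2(n-4)x^2J\bigr]+\frac{n-4}{2}(y-q)^2+\frac{B}{2}qy+\frac{(n-4)B}{8}q^2$.
This gives the coefficients:
\begin{itemize}
\item $\frac{3(n-4)}{2}$ for $y^2$, from $(n-4)+\frac{n-4}{2}$;
\item $-(n^2-10n+20)$ for $x^2y$;
\item $\frac{B}{2}-(n-2)-(n-4)$ for $qy$;
\item $(n-4)(n-2)$ for $x^2q$;
\item $\frac{n-4}{2}+\frac{(n-4)B}{8}$ for $q^2$.
\end{itemize}
This is exactly \eqref{radial:mixed-curvature}.

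Since $x^2+q=1$, the stated form is not unique. Matching monomials only makes sense if you keep $x$ and $q$ formally independent, as the raw expansion already does. Rewriting $x^2q=q-q^2$, as you propose, would move you away from the stated form rather than toward it.
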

\begin{proof}
Write $h:=\dd t^2+g_{\Sph^{n-1}}$ for the product cylinder metric.
The line factor is flat, while the unit $(n-1)$-sphere has Ricci
tensor $(n-2)g_{\Sph^{n-1}}$. Hence
\[
 \Ric_h=(n-2)g_{\Sph^{n-1}},\qquad R_h=(n-1)(n-2).
\]
By the definition of the Schouten tensor,
\[
 A_h=\frac{1}{n-2}\left(\Ric_h-\frac{R_h}{2(n-1)}h\right)
 =-\frac12\dd t^2+\frac12g_{\Sph^{n-1}}.
\]
Thus $h^{-1}A_h$ has eigenvalue $-1/2$ in the radial direction
and $1/2$ in the sphere directions, with multiplicities $1$ and
$n-1$, respectively. Since $g=e^{2v}h$ and $v$ depends only on
$t$, \eqref{id:conformal-schouten} gives the stated eigenvalues
for $g$. Their sum, the sum of their
pairwise products, and the sum of their squared deviations from
their mean give $\sigma_1(A_g)$, $\sigma_2(A_g)$, and
$|A_g^\circ|_g^2$, respectively, proving the corresponding formulas
in~\eqref{radial-curvatures}.

The metric $g$ is locally conformal to the round sphere, so its
Weyl tensor vanishes. The Weyl--Schouten decomposition therefore
gives
\[
 K_g\bigl(\operatorname{span}\{U,V\}\bigr)
 =A_g(U,U)+A_g(V,V)
\]
for any $g$-orthonormal tangent vectors $U,V$. For a radial plane,
we add the radial and tangential Schouten eigenvalues; for a plane
tangent to the sphere factor, we add two tangential eigenvalues.
The eigenvalues computed above thus yield
\[
 \begin{aligned}
 K_{\rm rad}
 &=e^{-2v}\left(y-\frac q2\right)+e^{-2v}\frac q2
 =e^{-2v}y,\\
 K_{\rm tan}
 &=e^{-2v}\frac q2+e^{-2v}\frac q2=e^{-2v}q.
 \end{aligned}
\]
To compute the Laplacian, we use the coordinate formula for the
Laplace--Beltrami operator,
\[
 \Delta_g f=\frac{1}{\sqrt{\det g}}\,
 \partial_i\!\left(\sqrt{\det g}\,g^{ij}\partial_j f\right).
\]
For a radial function $f=f(t)$, this gives
\[
 \Delta_g f=e^{-nv}\bigl(e^{(n-2)v}f'\bigr)'
 =e^{-2v}\bigl(f''-(n-2)xf'\bigr).
\]
Applying this formula to $\sigma_1(A_g)=e^{-2v}J$ and using
$v'=-x$ and $x'=y$, we obtain
\[
 \Delta_g\sigma_1(A_g)
 =e^{-4v}\bigl[J''+(6-n)xJ'+2yJ-2(n-4)x^2J\bigr].
\]
Substituting into~\eqref{ode:scalar-form}, together with
\eqref{radial-curvatures}, gives
\[
 \begin{split}
 e^{4v}I_a(g)={}&-J''-(6-n)xJ'-\bigl(2y-2(n-4)x^2\bigr)J\\
 &-\frac{(n-1)(a+4)}{2n}(y-q)^2+\frac B{2n}J^2.
 \end{split}
\]
Finally, use $J=y+(n-2)q/2$ and
\[
 J'=z-(n-2)xy,\qquad J''=z'-(n-2)(y^2+xz).
\]
Collecting terms and using $(n-1)(a+4)=B-n(n-4)$ yields
\eqref{radial:mixed-curvature}.
\end{proof}

Following the four-dimensional reduction of Gursky--Malchiodi
\cite[Sections~4.1 and~5.2]{GM12}, we derive its counterpart for
$n\ge4$, using an integral identity and the spherical end
conditions to eliminate $\Lambda e^{4v}$.
We seek a polynomial $\cK(x,y,z)$. Along the curve
$t\mapsto(x(t),y(t),z(t))$, we write $\cK$ or $\cK(t)$ for
$\cK(x(t),y(t),z(t))$ and use a prime for its derivative in $t$:
\[
 \cK'=\frac{\dd}{\dd t}\cK(x(t),y(t),z(t))
      =\cK_x\,y+\cK_y\,z+\cK_z\,z'.
\]
Here the partial derivatives of $\cK$ are evaluated at
$(x(t),y(t),z(t))$. Expressions such as $\cK(\pm1,0,0)$
denote values of the polynomial at the indicated points.
We require
\begin{align}\label{09092213}
 \left(e^{(n-4)v}\cK\right)'=xe^{nv}I_a(g)
 \end{align}
for every smooth function $v=v(t)$. If \eqref{09092213} holds and $I_a(g)=\Lambda$, then
$v'=-x$ gives
\begin{equation}\label{09092119}
 \left(e^{(n-4)v}\cK+\frac{\Lambda}{n}e^{nv}\right)'=0.
\end{equation}
At a smooth spherical pole, $u$ tends to a positive value.
Since $\theta'=\sech t$ and its first two derivatives tend to
zero, $\log u(\theta(t))$ has a finite limit and its first three
$t$-derivatives tend to zero. Thus~\eqref{cylinder} gives
\begin{align}
 v(t)&=-|t|+O(1)\longrightarrow-\infty,\\
 \label{09092117}
 (x,y,z)&=(-v',-v'',-v''')\longrightarrow(\pm1,0,0)
 \qquad\text{as }t\to\pm\infty.
\end{align}
In particular, $v\to-\infty$ at both ends.
These pole states motivate $\cK(\pm1,0,0)=0$.
For $n=4$, this fixes the additive constant in $\cK$.
For $n>4$, the factor $e^{(n-4)v}$ already tends to zero,
so boundedness of $\cK$ suffices for the endpoint limit below.

Suppose first that the smooth pole lies at $t\to+\infty$.
Fix a finite $t$ in the interval of definition and integrate
\eqref{09092119} over $[t,T]$, where $T>t$:
\[
 \left[e^{(n-4)v(s)}\cK(s)+\frac{\Lambda}{n}e^{nv(s)}\right]_{s=t}^{s=T}=0.
\]
As $T\to+\infty$, both terms at $T$ vanish. Hence
\[
 e^{(n-4)v(t)}\cK(t)+\frac{\Lambda}{n}e^{nv(t)}=0
\]
throughout that interval. For a pole at $t\to-\infty$, integrate
over $[T,t]$ and let $T\to-\infty$ to obtain the same conclusion.

We take
\begin{equation}\label{K}
 \cK:=-xz+\tfrac12y^2+(n-4)x^2y-(n-2)x^2q-\tfrac18(B+4)q^2.
\end{equation}
For $n=4$ and $a=-4/(1+\beta)$, $6\cK$ is $K_\beta$ in
\cite[(5.12)]{GM12}, whose transport law is \cite[(5.14)]{GM12}.
The specific polynomial~\eqref{K} is chosen to satisfy the two
requirements
\begin{equation}\label{K:requirements}
 \begin{aligned}
 \cK'-(n-4)x\cK&=xe^{4v}I_a(g),\\
 \cK(\pm1,0,0)&=0.
 \end{aligned}
\end{equation}
The first identity must hold for every smooth function $v=v(t)$;
these requirements are verified in Lemma~\ref{ode:reduction}.
For a solution of $I_a(g)=\Lambda$ with a smooth spherical pole,
they imply, by the integration above,
\[
 \Lambda e^{4v}=-n\cK,\qquad \cK'=-4x\cK.
\]
The first relation eliminates $\Lambda e^{4v}$ and closes the
system for $(x,y,z)$; the second gives its transport law.
Write $X:=(x,y,z)$.

\begin{lemma}[Radial reduction; cf.~{\cite[Section~5.2]{GM12}} in dimension four]\label{ode:reduction}
Let $g=u^2g_{\Sph^n}$ be smooth on $\Sph^n$, with $u$ positive
and radial. If $I_a(g)=\Lambda$ is constant, then the variables
in~\eqref{variables} satisfy $\Lambda e^{4v}=-n\cK$ and the
smooth autonomous system
\begin{equation}\label{system}
 \begin{split}
 x'&=y,\qquad y'=z,\qquad z'=G(x,y,z),\\
 G&:=(n-8)xz+2(n-3)y^2+(6n-20)x^2y\vphantom{\tfrac12}\\
 &\quad +(B/2-2n+6)qy-4(n-2)x^2q-\tfrac12(B+4)q^2.
 \end{split}
\end{equation}
Along every solution of this system, with $v'=-x$,
\begin{equation}\label{transport}
 \cK'=-4x\cK,\qquad e^{-4v}\cK=\text{constant}.
\end{equation}
Conversely, fix a solution $X(t)=(x(t),y(t),z(t))$ of
\eqref{system} on an interval containing $t = 0$. This fixes the
initial values $x(0),y(0),z(0)$. For any $v_0\in\R$, define
\[
 v(t):=v_0-\int_0^t x(s)\,\dd s.
\]
Then $v(0)=v_0$ and $v'=-x$, and the metric~\eqref{cylinder}
satisfies $I_a(g)=-ne^{-4v_0}\cK(0)$ throughout that interval.
The polynomial also has the equivalent expression
\begin{equation}\label{K-J}
 \cK=-xJ'+\tfrac12J^2-(2x^2+\frac{n-2}{2}q)J-\frac{(n-1)(a+4)}8q^2.
\end{equation}
\end{lemma}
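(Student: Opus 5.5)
The plan is to verify the polynomial identities in \eqref{K:requirements} by direct computation, and then read off each assertion of the lemma from them.

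\textbf{Pole values and the formula \eqref{K-J}.} The identity $\cK(\pm1,0,0)=0$ is immediate, since $q=0$ at $x=\pm1$ and every term of \eqref{K} carries a factor $y$, $z$ or $q$. For \eqref{K-J}, substitute $J=y+\frac{n-2}{2}q$ and $J'=z-(n-2)xy$ into the right side of \eqref{K-J} and expand. Then use $(n-1)(a+4)=B-n(n-4)$ to match the $q^2$ coefficient $-\frac18(B+4)$. The $q^2$ contributions are $\frac{(n-2)^2}{8}-\frac{(n-2)^2}{4}-\frac{B-n^2+4n}{8}=-\frac{B+4}{8}$, which is the required coefficient; the remaining coefficients are matched in the same way.

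\textbf{Transport identity.} The key step is the first line of \eqref{K:requirements}, namely $\cK'-(n-4)x\cK=xe^{4v}I_a(g)$ for arbitrary smooth $v$. I compute $\cK'=\cK_xy+\cK_yz+\cK_zz'$, where the only third-order contribution is $\cK_zz'=-xz'$. This matches the $-xz'$ term in $x\cdot$\eqref{radial:mixed-curvature}. The remaining terms are a polynomial identity in $(x,y,z)$, which I verify monomial by monomial: $xz\cdot x$, $xy^2$, $x^3y$, $yz$, $xqy$, $x^3q$, $xq^2$, and so on. Here $q'=-2xy$ is used throughout. Multiplying by $e^{(n-4)v}$ and using $v'=-x$ then gives \eqref{09092213}.

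\textbf{Forward direction.} Assume $I_a(g)=\Lambda$. Then \eqref{09092119} holds, and the pole limits \eqref{09092117} together with $v\to-\infty$ and the argument already given in the text yield $e^{(n-4)v}\cK+\frac{\Lambda}{n}e^{nv}=0$, that is, $\Lambda e^{4v}=-n\cK$. Insert this into \eqref{radial:mixed-curvature} and solve for $z'$. Using the definition of $\cK$, the quantity $-ne^{4v}\cK$ supplies the terms $nxz-\frac n2y^2-n(n-4)x^2y+n(n-2)x^2q+\frac n8(B+4)q^2$, so that
\[
z'=2(n-4)xz+\tfrac{3(n-4)}2y^2+\dots+nxz-\tfrac n2y^2-\dots .
\]
Collecting terms should produce $G$; for example, the $xz$ coefficient is $2(n-4)-n=n-8$, and the $y^2$ coefficient is $\frac{3n-12-n}{2}=n-6$. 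This second value conflicts with the $2(n-3)y^2$ in $G$, so I will recheck the sign conventions and recompute carefully at this point. For the transport law \eqref{transport}: along any solution of \eqref{system}, the identity $z'=G$ is equivalent to $e^{4v}I_a(g)=-n\cK$, since both come from the same rearrangement. Hence the transport identity gives $\cK'-(n-4)x\cK=-nx\cK$, that is, $\cK'=-4x\cK$, and this is equivalent to $(e^{-4v}\cK)'=0$.

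\textbf{Converse.} Given a solution of \eqref{system} and $v$ defined from $v_0$ by $v'=-x$, the same equivalence gives $e^{4v}I_a(g)=-n\cK$. Combined with the constancy of $e^{-4v}\cK$, this yields $I_a(g)=-ne^{-4v}\cK=-ne^{-4v_0}\cK(0)$.

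The main obstacle is the bookkeeping in the monomial verification and in the derivation of $G$. Everything else follows formally from these two computations.
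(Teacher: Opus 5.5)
Your route is the paper's. The paper proves two identities valid for arbitrary smooth $v$: $e^{4v}I_a(g)+n\cK=G-z'$ (this is \eqref{radial:equivalence}, exactly your ``same rearrangement'') and $\cK'+4x\cK=x(G-z')$ (this is \eqref{09092216}). It combines them into the transport requirement and then runs the first-integral, transport and converse arguments as you do.

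The one defect is in your derivation of $G$. The conflict you found is a sign slip on your side, not a problem with the conventions. Write \eqref{radial:mixed-curvature} as $e^{4v}I_a(g)=-z'+P(x,y,z)$ and use $e^{4v}\Lambda=-n\cK$; this gives $z'=P+n\cK$. So the terms to be added to $P$ are
\[
 n\cK=-nxz+\tfrac n2y^2+n(n-4)x^2y-n(n-2)x^2q-\tfrac n8(B+4)q^2,
\]
which are the negatives of those in your display. Your $xz$ coefficient $n-8$ was in fact computed with the correct sign, but your $y^2$ coefficient was not. Also, ``$-ne^{4v}\cK$'' should read $-n\cK$. With the sign fixed, the coefficients are:
\begin{itemize}
\item $xz$: $2(n-4)-n=n-8$;
\item $y^2$: $\tfrac{3(n-4)}2+\tfrac n2=2(n-3)$;
\item $x^2y$: $-n^2+10n-20+n(n-4)=6n-20$;
\item $qy$: $\tfrac B2-2n+6$;
\item $x^2q$: $(n-4)(n-2)-n(n-2)=-4(n-2)$;
\item $q^2$: $\tfrac{(n-4)-n}{8}(B+4)=-\tfrac12(B+4)$.
\end{itemize}
This is precisely $G$.

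For the transport requirement, you can avoid a second full expansion against $x$ times \eqref{radial:mixed-curvature} by doing what the paper does. Differentiating \eqref{K} with $x'=y$, $y'=z$, $q'=-2xy$ gives
\[
 \cK'+4x\cK=-xz'+(n-8)x^2z+2(n-3)xy^2+(6n-20)x^3y
 +\bigl(\tfrac B2-2n+6\bigr)xyq-4(n-2)x^3q-\tfrac12(B+4)xq^2,
\]
which is $x(G-z')$. Substituting $G-z'=e^{4v}I_a(g)+n\cK$ yields $\cK'-(n-4)x\cK=xe^{4v}I_a(g)$.

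Once these two identities are in hand, the rest of your proposal is correct as written:
\begin{itemize}
\item the first integral and pole limits give $\Lambda e^{4v}=-n\cK$;
\item along solutions, $\cK'=-4x\cK$ and $(e^{-4v}\cK)'=0$;
\item the converse gives $I_a(g)=-ne^{-4v_0}\cK(0)$.
\end{itemize}
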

\begin{proof}
For $n=4$ and $a\ne0$, the vector field in~\eqref{system}
and the transport law~\eqref{transport} agree with
\cite[(5.10), (5.12), and (5.14)]{GM12} under the substitutions
$\beta=-1-4/a$ and $K_\beta=6\cK$. We now verify the formulas
for every $n\ge4$.

For any smooth function $v=v(t)$, formula~\eqref{radial:mixed-curvature}
and the definitions of $\cK$ and $G$ in~\eqref{K} and~\eqref{system}
give
\begin{equation}\label{radial:equivalence}
 e^{4v}I_a(g)+n\cK=G(x,y,z)-z'.
\end{equation}
Direct differentiation of~\eqref{K}, using $x'=y$ and $y'=z$,
gives, for every smooth function $v=v(t)$,
\begin{align}\label{09092216}
 \cK'+4x\cK=x\bigl(G(x,y,z)-z'\bigr).
\end{align}
Combining these identities gives
\[
 \cK'-(n-4)x\cK=xe^{4v}I_a(g).
\]
Also, $\cK(\pm1,0,0)=0$ follows directly from~\eqref{K}.
This verifies both requirements in~\eqref{K:requirements}.
If $I_a(g)=\Lambda$, then $v'=-x$ yields
\begin{equation}\label{first-integral}
 \left(e^{(n-4)v}\cK+\frac\Lambda n e^{nv}\right)'=0.
\end{equation}
The smooth pole limits make the integration constant zero, as
shown above, so $\Lambda e^{4v}=-n\cK$.
Equation~\eqref{radial:equivalence} now gives~\eqref{system}.

Along every solution of~\eqref{system}, identity~\eqref{09092216}
gives $\cK'=-4x\cK$. Since $v'=-x$, this also makes
$e^{-4v}\cK$ constant, proving~\eqref{transport}.
Conversely, for any such solution and the reconstructed $v$,
\eqref{radial:equivalence} gives $I_a(g)=-ne^{-4v}\cK$.
Evaluating the constant at $t=0$ gives the stated value.
Finally, substituting $z=J'+(n-2)xy$ and
$y=J-(n-2)q/2$ into~\eqref{K} gives~\eqref{K-J}.
\end{proof}

Solutions of~\eqref{system} depend smoothly on their initial data
and parameters on every compact existence interval; see~\cite{Teschl}.
Lemma~\ref{ode:reduction} ensures that the initial-value construction
solves the curvature equation. However, smooth closure at the poles remains
to be proved.

\subsection{A local equation at a spherical pole}
For the round metric, equation~\eqref{09092223} gives the trajectory
\begin{equation}\label{round}
 X_{\rm rd}(t)=(\tanh t,\sech^2t,-2\tanh t\sech^2t),
\end{equation}
which is the reference trajectory for the initial data curves
in Lemma~\ref{ode:caps} and for the linearization in
Section~\ref{sec:high-neck}.
Also, $(-x(-t),y(-t),-z(-t))$ solves \eqref{system} whenever
$(x,y,z)$ does. By uniqueness, $x(0)=z(0)=0$ makes $v$ even.

In the cylinder coordinate $t$, a spherical pole lies at
$t=\pm\infty$. Prescribing a curvature value at the pole is therefore
an asymptotic condition. For the local construction near a pole,
it is more convenient to use $s:=e^{2v}$, which places the pole
at the finite endpoint $s=0$. Geometrically, a spherical section
of~\eqref{cylinder} has metric $s g_{\Sph^{n-1}}$, so $s$ is its
squared radius. This lets us study the curvature functions
$k:=q/s$ and $\sigma_1(A_g)$ directly near $s=0$.

The change of variable does not by itself prove smoothness at
the pole. The first equation below still has the vanishing
coefficient $s$ in front of $k_s$. We will handle this equation
by integration from $s=0$, and then verify smoothness of the
conformal factor in the usual spherical pole coordinates.

\begin{lemma}[The pole coordinate system]\label{ode:pole-reduction}
Let $g$ be a radial metric of the form~\eqref{cylinder} that
extends smoothly across a spherical pole and satisfies
$I_a(g)=\Lambda$ near that pole. On a sufficiently small punctured
neighborhood of the pole, the squared radius $s:=e^{2v}$ is a
valid local coordinate. Regard $k:=q/s$ and $\sigma_1(A_g)$ as
functions of $s$, and write $k_s:=\dd k/\dd s$. Then $k$ is the
tangential sectional curvature, and
\begin{equation}\label{pole-system}
 \begin{split}
 s k_s&=\sigma_1(A_g)-\frac n2 k,\\
 2(1-sk)\frac{\dd}{\dd s}\sigma_1(A_g)&=-\frac\Lambda n-\frac12\sigma_1(A_g)^2
 +\frac{n-2}{2}k\sigma_1(A_g)+\frac{(n-1)(a+4)}8k^2.
 \end{split}
\end{equation}
Thus~\eqref{pole-system} is a first-order system for $k$ and $\sigma_1(A_g)$.
\end{lemma}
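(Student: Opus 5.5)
The plan is to reduce both equations in \eqref{pole-system} to the formulas already available in the cylinder variable $t$, namely Lemma~\ref{ode:curvature-formulas}, the pole asymptotics \eqref{09092117}, and the first integral $\Lambda e^{4v}=-n\cK$ from Lemma~\ref{ode:reduction} in the form \eqref{K-J}. Then I would translate $t$-derivatives into $s$-derivatives by the chain rule.

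\emph{Step 1: $s$ is a coordinate.} Since $v'=-x$, we have $s'=\dd s/\dd t=-2xe^{2v}=-2xs$. By \eqref{09092117}, $x\to\pm1$ at the pole, so on a sufficiently small punctured neighborhood $x$ is bounded away from zero. There $s'$ has a fixed strict sign, and $s$ is strictly monotone in $t$. Since $v\to-\infty$, $s\to0$ at the pole, so $s$ is a valid coordinate on some interval $(0,s_0)$. On this interval we may divide by $x$ freely, and $\dd f/\dd s=f'/(-2xs)$ for every radial function $f$. Also $k=q/s=e^{-2v}q=K_{\rm tan}$ by \eqref{radial-curvatures}.

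\emph{Step 2: the $k$-equation.} Using $q'=-2xy$, compute
\[
k'=(e^{-2v}q)'=e^{-2v}(2xq-2xy)=-2xe^{-2v}(y-q).
\]
Dividing by $s'=-2xs$ gives $sk_s=e^{-2v}(y-q)$. By \eqref{radial-curvatures}, $\sigma_1(A_g)=e^{-2v}\bigl(y+\tfrac{n-2}{2}q\bigr)$, so $e^{-2v}(y-q)=\sigma_1(A_g)-\tfrac n2k$. This is the first equation.

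\emph{Step 3: the $\sigma_1$-equation.} From $\sigma_1(A_g)=e^{-2v}J$ we get $\sigma_1(A_g)'=e^{-2v}(J'+2xJ)$, hence
\[
\frac{\dd\sigma_1(A_g)}{\dd s}=-\frac{e^{-4v}(J'+2xJ)}{2x}.
\]
The key observation is that $1-sk=1-q=x^2$. It follows that
\[
2(1-sk)\frac{\dd\sigma_1(A_g)}{\dd s}=-xe^{-4v}J'-2x^2e^{-4v}J.
\]
Next, multiply \eqref{K-J} by $e^{-4v}$ and insert $e^{-4v}\cK=-\Lambda/n$ from Lemma~\ref{ode:reduction}. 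The terms $-xe^{-4v}J'-2x^2e^{-4v}J$ appear there verbatim. The remaining terms are rewritten in the new variables: $\tfrac12e^{-4v}J^2=\tfrac12\sigma_1(A_g)^2$, $\tfrac{n-2}{2}e^{-4v}qJ=\tfrac{n-2}{2}k\sigma_1(A_g)$, and $e^{-4v}q^2=k^2$. Solving for the two $J$-terms yields exactly the second equation of \eqref{pole-system}.

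\emph{Obstacle.} Lemma~\ref{ode:reduction} is stated for a metric smooth on all of $\Sph^n$. The one point needing care is therefore that the identity $\Lambda e^{4v}=-n\cK$ only requires the single smooth pole under consideration. I would handle this by rerunning the integration of \eqref{first-integral} from that pole alone, which the paper already does for either end. The rest is the sign bookkeeping in Step 3 and the use of $x\neq0$ near the pole.
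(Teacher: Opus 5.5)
Your proposal is correct and follows essentially the same route as the paper. Both use the chain rule $\dd/\dd t=-2xs\,\dd/\dd s$, the identities $k=e^{-2v}q$, $J=s\sigma_1(A_g)$ and $x^2=1-sk$, and the one-pole first integral $\cK=-\Lambda s^2/n$ inserted into \eqref{K-J}. The paper substitutes $J'=-2xs\bigl(\sigma_1(A_g)+s\,\dd\sigma_1(A_g)/\dd s\bigr)$ into \eqref{K-J} and divides by $s^2$, whereas you first express $\dd\sigma_1(A_g)/\dd s$ through $J'$; this is only a reordering of the same computation, and your single-pole integration matches the paper's appeal to the argument preceding Lemma~\ref{ode:reduction}.
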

\begin{proof}
Primes continue to denote derivatives with respect to $t$. 
Since $v'=-x$, we have $s'=-2xs$. At a smooth pole,
$x\to\pm1$ and $s\to0$, so $x\ne0$ and $s>0$ on a
sufficiently small punctured neighborhood. Thus $s'\ne0$,
which allows us to use $s$ as a local coordinate. The chain rule
then gives
\[
 \frac{\dd}{\dd t}=-2xs\frac{\dd}{\dd s}.
\]
By~\eqref{radial-curvatures}, $k=e^{-2v}q=K_{\rm tan}$ and
$J=s\sigma_1(A_g)$.

To obtain the first equation, differentiate $q=1-x^2$ in $t$.
Since $x'=y$, we have
\[
 q'=-2xy,\qquad
 \frac{\dd q}{\dd s}=\frac{q'}{s'}=\frac ys.
\]
On the other hand, $q=sk(s)$ and
$J=y+(n-2)q/2=s\sigma_1(A_g)$ give
\[
 k+sk_s=\frac{\dd q}{\dd s}
 =\frac ys=\sigma_1(A_g)-\frac{n-2}{2}k.
\]
Rearranging yields $sk_s=\sigma_1(A_g)-nk/2$.

For the second equation, differentiating $J=s\sigma_1(A_g)$ gives
\[
 \begin{aligned}
 J'&=s'\left(\sigma_1(A_g)+s\frac{\dd}{\dd s}\sigma_1(A_g)\right)\\
 &=-2xs\left(\sigma_1(A_g)+s\frac{\dd}{\dd s}\sigma_1(A_g)\right).
 \end{aligned}
\]
Substituting this formula, $J=s\sigma_1(A_g)$, and $q=sk$ into
\eqref{K-J}, we obtain
\[
 \begin{aligned}
 \cK={}&2x^2s\left(\sigma_1(A_g)+s\frac{\dd}{\dd s}\sigma_1(A_g)\right)
 +\frac12s^2\sigma_1(A_g)^2\\
 &-2x^2s\sigma_1(A_g)-\frac{n-2}{2}s^2k\sigma_1(A_g)
 -\frac{(n-1)(a+4)}8s^2k^2.
 \end{aligned}
\]
Dividing by $s^2>0$ therefore gives
\[
 \begin{aligned}
 \frac{\cK}{s^2}
 ={}&2x^2\frac{\dd}{\dd s}\sigma_1(A_g)+\frac12\sigma_1(A_g)^2 -\frac{n-2}{2}k\sigma_1(A_g)-\frac{(n-1)(a+4)}8k^2.
 \end{aligned}
\]
The smooth pole condition and $I_a(g)=\Lambda$ give
$\cK=-\Lambda s^2/n$ by the argument preceding
Lemma~\ref{ode:reduction}. Using this identity and
$x^2=1-q=1-sk$ yields the second equation in~\eqref{pole-system}.
All these calculations take place for $s>0$; the pole corresponds
to the endpoint $s\downarrow0$.
\end{proof}

Lemma~\ref{ode:pole-reduction} shows that a smooth radial metric
with $I_a(g)=\Lambda$ satisfies~\eqref{pole-system} near a pole.
Conversely, we can also construct a radial metric from a solution
of~\eqref{pole-system}. Suppose that $k$ and $\sigma_1(A_g)$
are smooth functions of $s$ satisfying these equations on an
open interval where $s>0$ and $1-sk(s)>0$. Choose one sign in
$x:=\pm\sqrt{1-sk}$ and define $t$ by
\[
 \frac{\dd t}{\dd s}=-\frac{1}{2xs}.
\]
Since this derivative is nonzero, we can locally write $s$ as
a function of $t$. Set $v(t):=\frac12\log s(t)$ and define $g$
by~\eqref{cylinder}. The corresponding conformal factor on the
sphere is defined by
\[
 u(\theta(t)):=e^{v(t)}\cosh t=\sqrt{s(t)}\cosh t.
\]
Indeed, since $g_{\Sph^n}=\sech^2t(\dd t^2+g_{\Sph^{n-1}})$,
the same metric satisfies
\[
 g=e^{2v}(\dd t^2+g_{\Sph^{n-1}})=u^2g_{\Sph^n}
\]
on the corresponding region away from the pole.
We also have $s'=-2xs$ and $v'=-x$.

We next check the curvature of this metric. Set $y:=x'$ and
$z:=y'$. The first equation in~\eqref{pole-system} gives
\[
 y=s\left(\sigma_1(A_g)-\frac{n-2}{2}k\right),
 \qquad J=y+\frac{n-2}{2}q=s\sigma_1(A_g).
\]
By~\eqref{radial-curvatures}, the Schouten trace of $g$ is
therefore exactly the function denoted by $\sigma_1(A_g)$.
The second equation in~\eqref{pole-system}, together
with~\eqref{K-J}, gives $\cK=-\Lambda s^2/n$.
Differentiating this relation and using $s'=-2xs$ yields
$\cK'=-4x\cK$. Substituting these two identities into the
first equation in~\eqref{K:requirements} gives $I_a(g)=\Lambda$,
since $x\ne0$. Equation~\eqref{radial:equivalence} then shows
that $(x,y,z)$ satisfies~\eqref{system}.
This construction is defined for $s>0$.
The next lemma gives conditions under which $u$ extends smoothly
and positively to the pole, so that $g$ extends smoothly there.

\Needspace{8\baselineskip}
\begin{lemma}[Existence and smoothness at a pole]\label{ode:pole}
Fix $n\ge4$, $a,\Lambda\in\R$, and a prescribed pole value
$c>0$ of the Schouten trace. There exists $\varepsilon>0$ such that
\eqref{pole-system} has a solution whose two functions $k$ and
$\sigma_1(A_g)$ are smooth on $[0,\varepsilon]$ and satisfy
\[
 \sigma_1(A_g)(0)=c,\qquad k(0)=\frac{2c}{n}.
\]
This solution is unique near $s=0$ among solutions for which
$k$ is bounded and $\sigma_1(A_g)\to c$ as $s\downarrow0$.
For parameters $(c,\Lambda,a)$ near any fixed such triple,
$\varepsilon$ can be chosen uniformly, and both functions
depend smoothly on $s$ and these parameters.
The associated radial metric extends smoothly across the pole,
with a positive smooth conformal factor, $I_a(g)=\Lambda$,
and positive scalar curvature near the pole.
\end{lemma}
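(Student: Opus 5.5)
Write $\sigma:=\sigma_1(A_g)$. The plan treats \eqref{pole-system} as a singular ODE at $s=0$ of Briot--Bouquet type. The first equation, $sk_s=\sigma-\tfrac n2k$, is singular. The second equation is regular near $s=0$, since $1-sk\to1$ there.

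\textbf{Reduction to a regular fixed-point problem.} Substitute $k=\tfrac{2}{n}\sigma(0)+\kappa$ is not the right normalization; instead integrate the first equation exactly. With an integrating factor,
\[
 k(s)=s^{-n/2}\int_0^s \tau^{n/2-1}\sigma(\tau)\,\dd\tau=\int_0^1 \lambda^{n/2-1}\sigma(\lambda s)\,\dd\lambda .
\]
This formula is the unique bounded solution given $\sigma$, because the homogeneous solutions are $Cs^{-n/2}$. Boundedness of $k$ therefore selects it, and it gives $k(0)=\tfrac2n c$ automatically.

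Insert this into the second equation, written as
\[
 \sigma_s=F(s,k,\sigma,c,\Lambda,a)/\bigl(2(1-sk)\bigr),
\]
with $F$ polynomial. We obtain an integral equation
\[
 \sigma(s)=c+\int_0^s\frac{F(\tau,(\mathcal A\sigma)(\tau),\sigma(\tau))}{2(1-\tau(\mathcal A\sigma)(\tau))}\,\dd\tau,
\]
where $\mathcal A$ is the averaging operator above. The operator $\mathcal A$ has norm at most $2/n$ on $C^0[0,\varepsilon]$ and commutes with $s\,\dd/\dd s$. The Picard map is therefore a contraction on a ball in $C^0[0,\varepsilon]$ around $c$ for small $\varepsilon$, with $\varepsilon$ uniform for nearby $(c,\Lambda,a)$. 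This gives existence, and uniqueness among solutions with bounded $k$ and $\sigma\to c$. Smooth dependence on the parameters follows from the implicit function theorem applied to the fixed-point map.

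\textbf{Regularity in $s$.} For smoothness on $[0,\varepsilon]$ I would bootstrap. If $\sigma\in C^m$, then $k=\mathcal A\sigma\in C^m$, since $\partial_s^j\mathcal A\sigma=\int\lambda^{n/2-1+j}\sigma^{(j)}(\lambda s)\,\dd\lambda$. The second equation then gives $\sigma\in C^{m+1}$.

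\textbf{Smoothness of the metric at the pole: the main obstacle.} Choose the sign of $x$ to match the pole, for instance $x=+\sqrt{1-sk}$ for a pole at $t=+\infty$. Note that $\theta$-smoothness of $u$ is not automatic from smoothness in $s$. Let $\rho$ be the geodesic-type coordinate on the sphere at the pole, say $\rho=\pi-\theta$. Then $u$ must be a smooth even function of $\rho$, or equivalently a smooth function of $\rho^2$.

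The plan is to compute $\rho$ as a function of $s$. We have $\dd t/\dd s=-1/(2s\sqrt{1-sk})$ and $\tan(\rho/2)=e^{-t}$. Hence
\[
 \log\tan(\rho/2)=\tfrac12\log s+\int_0^s\Bigl(\frac1{2\tau\sqrt{1-\tau k}}-\frac1{2\tau}\Bigr)\dd\tau+C_0 ,
\]
and the integrand is smooth in $\tau$. Thus $\tan^2(\rho/2)=s\,e^{h(s)}$ with $h$ smooth. Because $u=\sqrt s\cosh t$, we get
\[
 u^2=s\,\frac{(1+\tan^2(\rho/2))^2}{4\tan^2(\rho/2)},
\]
which is a smooth positive function of $s$ near $0$. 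Since $w:=\tan^2(\rho/2)=se^{h(s)}$ can be inverted smoothly to $s=s(w)$, and $w$ is a smooth function of $\rho^2$, it follows that $u^2$ is smooth in $\rho^2$. Hence $u$ is positive and smooth across the pole.

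The additive constant $C_0$ corresponds to the freedom of conformal dilations, and $v_0$ to homothety. The relation $I_a(g)=\Lambda$ was already verified for $s>0$ in the converse construction, so it extends by continuity. Finally, $R_g=2(n-1)\sigma\to2(n-1)c>0$, which gives positive scalar curvature near the pole.
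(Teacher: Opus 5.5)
Your proposal is correct and follows the paper's proof essentially step by step. It uses the same integrating-factor (averaging) formula for $k$, the same contraction argument for the Picard map on a $C^0$ ball around $c$ with $\varepsilon$ uniform in nearby parameters, and the same bootstrap for smoothness in $s$. It also shows smoothness at the pole in the same way, by comparing $s$ with the stereographic squared radius $e^{-2t}=\tan^2(\rho/2)$ through a smooth logarithmic derivative and then inverting. The differences are cosmetic: you work with $u^2$ and pass through $\rho^2$ instead of stopping at $e^{-2t}$, and you leave implicit the smallness condition (the paper's $\varepsilon\le n/(6c)$) that keeps $1-s\,\mathcal A f\ge 1/2$.
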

\begin{proof}
\emph{Step 1: determine $k$ from the Schouten trace.}
Multiplying the first equation in~\eqref{pole-system} by
$s^{n/2-1}$ gives
\[
 \frac{\dd}{\dd s}\bigl(s^{n/2}k(s)\bigr)
 =s^{n/2-1}\sigma_1(A_g)(s).
\]
Integrating over $[\delta,s]$, where $0<\delta<s$, yields
\[
 s^{n/2}k(s)-\delta^{n/2}k(\delta)
 =\int_\delta^s\tau^{n/2-1}\sigma_1(A_g)(\tau)\,\dd\tau.
\]
If $k$ is bounded, the term at $\delta$ tends to zero as
$\delta\downarrow0$. Dividing by $s^{n/2}$ and substituting
$\tau=sr$, we obtain
\begin{equation}\label{pole-average}
 k(s)=\int_0^1r^{n/2-1}\sigma_1(A_g)(sr)\,\dd r.
\end{equation}
In particular, $\sigma_1(A_g)\to c$ implies $k\to2c/n$.

\emph{Step 2: solve for the Schouten trace.}
For a continuous function $f$ on $[0,\varepsilon]$, define
\[
 (\mathcal A f)(s) :=\int_0^1r^{n/2-1}f(sr)\,\dd r.
\]
This is the formula in Step 1 but applied to $f$, and
$\|\mathcal A f\|_\infty\le(2/n)\|f\|_\infty$.
We now define an operator $\mathcal T$. Given a continuous
trial function $f$ for which $1-s(\mathcal A f)(s)>0$ on
$[0,\varepsilon]$, let $\mathcal T f$ be the continuous function
on this interval defined by
\[
 (\mathcal T f)(s):=c+\int_0^s
 \frac{-\Lambda/n-\tfrac12 f(\tau)^2
 +\tfrac{n-2}{2}(\mathcal A f)(\tau)f(\tau)
 +\tfrac{(n-1)(a+4)}8(\mathcal A f)(\tau)^2}
 {2\bigl(1-\tau(\mathcal A f)(\tau)\bigr)}\,\dd\tau.
\]
The formula is obtained by replacing $\sigma_1(A_g)$ with $f$
and $k$ with $\mathcal A f$ in the expression for
$\dd\sigma_1(A_g)/\dd s$ given by the second equation
in~\eqref{pole-system}, integrating from $0$ to $s$, and adding
the prescribed initial value $c$. In particular,
$(\mathcal T f)(0)=c$ for every such $f$.
A fixed point means a function satisfying
$f(s)=(\mathcal T f)(s)$ throughout the interval.
It gives the desired $\sigma_1(A_g)$, and~\eqref{pole-average}
then gives $k$.

We apply the contraction mapping theorem on the closed ball
\[
 \bigl\{f\in C([0,\varepsilon]):
             \|f-c\|_\infty\le c/2\bigr\}.
\]
For every $f$ in this ball, $\|f\|_\infty\le3c/2$ and
$\|\mathcal A f\|_\infty\le3c/n$. Choose
$\varepsilon\le n/(6c)$, so
$1-s(\mathcal A f)(s)\ge1/2$ throughout the interval.
For two functions $f$ and $\widetilde f$ in the ball, linearity
of $\mathcal A$ gives
\[
 \|\mathcal A f-\mathcal A\widetilde f\|_\infty
 \le\frac2n\|f-\widetilde f\|_\infty.
\]
The integrand defining $\mathcal T$ and its partial derivatives
with respect to the two values $f(\tau)$ and
$(\mathcal A f)(\tau)$ are uniformly bounded: these values
remain bounded, and the denominator stays away from zero.
The mean value theorem, followed by integration over $[0,s]$,
therefore gives a constant $C$ independent of $\varepsilon$ such that
\[
 \begin{aligned}
 &| (\mathcal T f)(s)-(\mathcal T\widetilde f)(s)|\\
 &\quad\le C\int_0^s\bigl(|f(\tau)-\widetilde f(\tau)|
       +|(\mathcal A f)(\tau)-(\mathcal A\widetilde f)(\tau)|\bigr)\,\dd\tau\\
 &\quad\le C(1+2/n)s\|f-\widetilde f\|_\infty.
 \end{aligned}
\]
Set $L:=C(1+2/n)$. Taking the supremum over
$s\in[0,\varepsilon]$ yields
\[
 \|\mathcal T f-\mathcal T\widetilde f\|_\infty
 \le L\varepsilon\|f-\widetilde f\|_\infty.
\]
The uniform bound on the integrand also gives
$\|\mathcal T f-c\|_\infty\le M\varepsilon$ for a constant
$M$ independent of $\varepsilon$.
Taking $M\varepsilon<c/4$ and $L\varepsilon<1/2$ makes
$\mathcal T$ a contraction mapping the ball into its interior.
It therefore has a unique fixed point. Every solution in the
class stated in the lemma satisfies~\eqref{pole-average};
integrating the second equation in~\eqref{pole-system} then gives
the same fixed point equation. After shrinking the interval,
its Schouten trace lies in the same ball, proving local uniqueness.

\emph{Step 3: prove smoothness at $s=0$ and in the parameters.}
The integral equation first gives
$\sigma_1(A_g)\in C^1([0,\varepsilon])$.
For every $f\in C^j([0,\varepsilon])$, differentiation under
the integral gives
\[
 \frac{\dd^j}{\dd s^j}(\mathcal A f)(s)
 =\int_0^1r^{n/2-1+j}f^{(j)}(sr)\,\dd r.
\]
Hence $\mathcal A$ preserves $C^j$ regularity. If the Schouten
trace $\sigma_1(A_g)$ is $C^j$, then so is $k$ by~\eqref{pole-average}, and the
second equation in~\eqref{pole-system} makes the trace $C^{j+1}$.
Repeating this argument proves smoothness of both functions
up to $s=0$.

The strict inequalities in Step 2 also hold for all parameters
near the fixed data, using the same interval and the same ball
centered at the original value of $c$. The map $\mathcal T$
is smooth in the trial function and in $(c,\Lambda,a)$.
Moreover, $\|D_f\mathcal T\|<1/2$, so
$\mathrm{Id}-D_f\mathcal T$ is invertible.
The implicit function theorem applied to the fixed point
equation gives smooth parameter dependence in
$C([0,\varepsilon])$. The averaging formula and the differential
equation then give all mixed derivatives in $s$ and the parameters.

\emph{Step 4: show that the metric is smooth at the pole.}
The construction before this lemma gives a metric for $s>0$.
To extend it across the
pole, we need to show that its conformal factor $u$ is smooth
and positive there. Consider the choice $x=\sqrt{1-sk}$;
the other sign is treated by replacing $t$ with $-t$.
Recovering $t$ from $s$ leaves the freedom to translate $t$.
We fix this freedom by defining
\[
 t(s):=\int_s^\varepsilon
       \frac{\dd\tau}{2\tau\sqrt{1-\tau k(\tau)}},
\]
so that $t=0$ at $s=\varepsilon$.
In the standard stereographic chart at $\theta=\pi$,
$e^{-2t}=\cot^2(\theta/2)$ is the squared Euclidean radius.
It is therefore enough to show that $u$ is a smooth positive
function of $e^{-2t}$ near zero.

The key is to compare $s$ with $e^{-2t}$.
Since $s'=-2xs$, we have
\[
 \frac{\dd}{\dd s}\log\frac{e^{-2t}}s
 =\frac{(1-sk)^{-1/2}-1}{s}
 =\frac{k}{\sqrt{1-sk}\bigl(1+\sqrt{1-sk}\bigr)}.
\]
The right-hand side is smooth at $s=0$, because $k$ is smooth
there. Integrating shows that $e^{-2t}/s$ extends smoothly
and positively to $s=0$. Thus $e^{-2t}$ is a smooth function
of $s$, vanishing at zero with positive derivative.
By the inverse function theorem, $s$ is a smooth function of
$e^{-2t}$, and the ratio $s/e^{-2t}$ is smooth and positive
at zero. Consequently,
\[
 u(\theta)=e^v\cosh t
 =\frac{1+e^{-2t}}2\sqrt{\frac{s}{e^{-2t}}}
\]
is a smooth positive function of the squared radius in the
stereographic chart. Hence $u$, and therefore $g=u^2g_{\Sph^n}$,
extends smoothly across the pole.
By continuity, $I_a(g)=\Lambda$ at the pole and the Schouten
trace there equals $c>0$, so the scalar curvature is positive
near the pole. Step 3 provides the same $\varepsilon$ for all
nearby parameter values. Our choice $t=0$ at $s=\varepsilon$
therefore fixes the value of $e^{-2t}/s$ there to be
$1/\varepsilon$, independently of the parameters.
The logarithmic derivative above is smooth in $s$ and the
parameters, so integration and the inverse function theorem
with parameters show that the extended $u$ also depends
smoothly on the parameters.
\end{proof}

To use Lemma~\ref{ode:pole} in the matching argument of
Section~\ref{sec:high-neck}, we extend its trajectories to the
section $x=0$, $y>0$. Varying the pole data gives two curves
of intersection points. Each point determines a trajectory
whose metric extends smoothly across the corresponding pole.
The next lemma proves that these initial data form smooth
curves and gives the uniform pole estimates needed for matching.

\begin{lemma}[Two curves of initial data]\label{ode:caps}
Let $\Sigma:=\{(x,y,z):x=0,\ y>0\}$. For $a$ sufficiently
close to $-4$, there are two smooth embedded curves of initial
data in $\Sigma$, both passing through $(0,1,0)$ and depending
smoothly on $a$. The trajectories starting on the first and
second curves give metrics that extend smoothly across the
poles at $t\to-\infty$ and $t\to+\infty$, respectively.
The reflection $(0,y,z)\mapsto(0,y,-z)$ exchanges the two curves.
For a trajectory starting on the second curve, set $\tau:=t-t_0$,
where $t_0$ is the time at which $X(t_0)\in\Sigma$.
Then, as $\tau\to+\infty$,
\begin{equation}\label{tail}
 X-(1,0,0)=b e^{-2\tau}(-1,2,-4)+O(e^{-4\tau}),\qquad b>0.
\end{equation}
The coefficient $b$ depends smoothly on the initial point and
on $a$, and equals $2$ for the round trajectory. The remainder
estimates hold after any fixed number of time and parameter
derivatives, uniformly for initial points near $(0,1,0)$ and
$a$ near $-4$.
\end{lemma}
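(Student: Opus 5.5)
Fix $\Lambda$ by the normalization dictated by the round metric, so that the round trajectory corresponds to pole value $c_{\rm rd}$; the free parameter is then the pole value $c$ of the Schouten trace. For $(c,a)$ near $(c_{\rm rd},-4)$, Lemma~\ref{ode:pole} gives a local solution on $[0,\varepsilon]$ with $\varepsilon$ uniform. The associated metric extends smoothly across the pole at $t\to+\infty$, and we have $x=\sqrt{1-sk}$ near that pole. The first step is to show that, in the $t$ variable, the trajectory $X(t)$ obtained from this local solution can be continued backward by the system~\eqref{system} until it reaches $\Sigma$. For the round trajectory~\eqref{round} this happens at $t=0$, where $X_{\rm rd}(0)=(0,1,0)$, and the crossing is transversal because $x'=y=1>0$. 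Hence by smooth dependence on data and parameters on compact time intervals, together with the implicit function theorem applied to $x(t)=0$, nearby trajectories cross $\Sigma$ at a unique nearby time $t_0(c,a)$. The point $X(t_0)\in\Sigma$ depends smoothly on $(c,a)$, and this defines the second curve $c\mapsto P_+(c,a)$. The first curve is its image under the reflection $(0,y,z)\mapsto(0,y,-z)$. This is justified by the symmetry $(x,y,z)(t)\mapsto(-x,y,-z)(-t)$ of~\eqref{system}, which exchanges the two poles.

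To show that $P_+$ is an embedded curve through $(0,1,0)$, I would verify that $\partial_cP_+\ne0$ at the round data. Differentiate the family at $c_{\rm rd}$ to obtain a solution $\dot X$ of the linearization of~\eqref{system} along $X_{\rm rd}$, whose pole behaviour is fixed by $\dot c\ne0$. The main obstacle is excluding $\partial_cP_+=0$ at $a=-4$. My first attempt would be to argue by scaling. Varying $c$ with $\Lambda$ fixed corresponds, near the round metric, to moving along the family $s^2\Phi^*g_{\Sph^n}$, which changes $\Lambda$ unless compensated. I would then use the conserved quantity $e^{-4v}\cK=-\Lambda/n$ to show that a vanishing variation of the $\Sigma$-data would force the trajectory, and hence $(c,\Lambda)$, to coincide with the round one. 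Concretely, if $\dot X(t_0)=0$ and $\dot t_0$ is absorbed by time translation, then uniqueness for the linear ODE gives $\dot X\equiv cX_{\rm rd}'$. This contradicts the fact that $\dot c\ne0$ changes the pole curvature value, and the pole curvature value is determined by the tail of $X$ through $\sigma_1=e^{-2v}J$.

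For the tail~\eqref{tail}, I would linearize~\eqref{system} at the pole equilibrium $(1,0,0)$. There $q=0$, and the Jacobian is computed from $G$: the $x$-derivative contributes $-4(n-2)\partial_xq=8(n-2)$, and the remaining entries are computed similarly. Its eigenvalues should include $-2$ with eigenvector $(-1,2,-4)$, consistent with $X_{\rm rd}-(1,0,0)\sim 2e^{-2\tau}(-1,2,-4)$. Indeed, $\tanh t-1\sim-2e^{-2t}$, $\sech^2t\sim4e^{-2t}$, and $-2\tanh t\sech^2t\sim -8e^{-2t}$, which gives $b=2$. The other eigenvalues should be $-4$ and a positive one. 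Trajectories smooth at the pole lie on the stable manifold, so the dominant term is $be^{-2\tau}$. The next correction is $O(e^{-4\tau})$, coming both from the quadratic terms (which produce $e^{-4\tau}$) and from the $-4$ mode. One must check that no resonance creates $\tau e^{-4\tau}$; a resonance would only weaken the bound by a log factor.

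Alternatively, and more cleanly, the tail follows directly from Lemma~\ref{ode:pole}. Since $e^{-2t}/s$ is smooth and positive in $s$, and $k,\sigma_1$ are smooth in $s$, the functions $x=\sqrt{1-sk}$, $y$, and $z$ are smooth functions of $w=e^{-2\tau}$ with $x=1+O(w)$. Hence $X-(1,0,0)=wV+O(w^2)$, where $V$ is proportional to $(-1,2,-4)$ by the relations $y=x'$ and $z=y'$ applied to $w'=-2w$. Positivity $b>0$ follows from $x-1\approx-\tfrac12sk<0$, since $k(0)=2c/n>0$. Smoothness in parameters and derivatives is uniform by Step~3 of Lemma~\ref{ode:pole}.
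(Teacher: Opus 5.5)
Your construction of the curves has a genuine gap, and it sits exactly where the lemma is later used. The state $X$ is invariant under homotheties $g\mapsto\lambda^2g$, and these send the pole data $(c,\Lambda)$ to $(\lambda^{-2}c,\lambda^{-4}\Lambda)$. The trajectories smooth at a pole therefore form a family parametrized by the invariant ratio $\Lambda/c^2$. Fixing $\Lambda$ and varying $c$ moves this ratio at speed $-2\Lambda/c^3$. The round value is $\Lambda_{\rm rd}=Bc^2/(2n)$ with $B=(n-1)a+n^2-4$, and for $n=4$ one has $B=3(a+4)$. So in dimension four at $a=-4$ your family has $\Lambda=0$ fixed. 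Every round sphere has $I_{-4}=0$ there, so by the uniqueness in Lemma~\ref{ode:pole} each member is a homothety of the round metric. Hence $c\mapsto P_+(c,-4)$ is the constant map to $(0,1,0)$. The curve collapses to a point precisely at $\eta=0$, where Lemma~\ref{shooting} needs $\mathcal C_0^\pm$ as regular arcs meeting transversally. For $a$ near $-4$ your arcs shrink like $|a+4|$, so you also do not get curves depending smoothly on $a$. No other fixed value of $\Lambda$ helps, since the round trajectory at $n=4$, $a=-4$ has $\Lambda/c^2=0$.

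Your nondegeneracy argument fails for the same reason. After reducing to $\dot X\equiv\kappa X_{\rm rd}'$, you claim a contradiction because the pole value can be read off from the tail of $X$ through $\sigma_1=e^{-2v}J$. But $v$ is not a function of $X$. The only scale information is $e^{-4v}\cK=-\Lambda/n$, and with $\dot\Lambda=0$ it yields $\dot v=\kappa v'$ only where $\cK\neq0$. For $n\ge5$ this repairs your argument: then $\dot\sigma_1=\kappa\sigma_1'=0$, because $\sigma_1\equiv c$ on the round metric, contradicting $\partial_c\sigma_1\to1$ at the pole. For $n=4$, $a=-4$, however, $\cK_{\rm rd}\equiv0$ and nothing is gained.

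The remedy is to vary the invariant direction: fix $c=n/2$, vary $\Lambda$ near $nB/8$, and show that the data move by evaluating the polynomial $\cK(X)$ on the section $x=1-\varepsilon$. There $\cK=-\Lambda s^2/n$. Since $k(0)=1$ for every member, $\partial_\Lambda k=O(s)$, so $\dd s/\dd\Lambda=O(\varepsilon^2)$ and $\dd\cK/\dd\Lambda=-(s^2/n)(1+O(\varepsilon))\neq0$, also at $\Lambda=0$. The flow map to $\Sigma$ then transports this embedded curve. Your backward continuation to $\Sigma$, the reflection, and the tail derivation from Step~4 of Lemma~\ref{ode:pole} (with $b>0$ from $k(0)>0$) agree with the paper.
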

\begin{proof}
By \eqref{variables} and \eqref{cylinder}, multiplying a metric by a positive constant changes its
Schouten trace but leaves $X$ unchanged. We may therefore fix
the pole trace to be $n/2$. Lemma~\ref{ode:pole} then gives a
smooth family of solutions near the pole $t\to+\infty$ as
$\Lambda$ varies near the round value $nB/8$.
Since $k(0)=1$ for every member of this family,
\[
 k=1+O(s),\qquad \partial_\Lambda k=O(s),
\]
where the partial derivative is taken at fixed $s$.
To compare their trajectories, choose a small fixed
$\varepsilon>0$ and record their data where $x=1-\varepsilon$.
Here $sk=2\varepsilon-\varepsilon^2$, and hence
\[
 s=2\varepsilon+O(\varepsilon^2),\qquad
 \frac{\dd s}{\dd\Lambda}
 =-\frac{s\,\partial_\Lambda k}{k+sk_s}=O(\varepsilon^2).
\]
Along this section, $\cK=-\Lambda s^2/n$ gives
\[
 \frac{\dd\cK}{\dd\Lambda}
 =-\frac{s^2}{n}\left(1+\frac{2\Lambda}{s}
                         \frac{\dd s}{\dd\Lambda}\right)
 =-\frac{s^2}{n}(1+O(\varepsilon))<0.
\]
Thus $\cK$, a smooth polynomial in $X$, strictly decreases as
$\Lambda$ varies and has nonzero derivative. The recorded data
therefore form a smooth embedded curve.

Now follow these trajectories backwards to $\Sigma$. The round
trajectory crosses both sections with $x'=y>0$. Nearby
trajectories therefore also cross them, with crossing times
and data depending smoothly on the parameters. Following the
flow in the opposite direction gives the inverse map between
the sections, so the data on $\Sigma$ still form a smooth
embedded curve. This is the second curve in the statement.
The round trajectory gives $(0,1,0)$, and the symmetry
$t\mapsto-t$, $(x,y,z)\mapsto(-x,y,-z)$ gives the other family.

It remains to prove~\eqref{tail}. Near the pole,
$q=s+O(s^2)$ and $y=s+O(s^2)$. Using $x=\sqrt{1-q}$ and
$s'=-2xs$, we obtain
\[
 x=1-\tfrac12s+O(s^2),\qquad
 y=s+O(s^2),\qquad z=-2s+O(s^2).
\]
Step 4 of Lemma~\ref{ode:pole} shows that $s/e^{-2t}$ is a
smooth positive function of $e^{-2t}$ and the parameters.
This remains true after the smooth time shift $\tau=t-t_0$.
Thus, with
\[
 b:=\tfrac12\lim_{\tau\to+\infty}e^{2\tau}s(\tau)>0,
\]
we have $s=2b e^{-2\tau}+O(e^{-4\tau})$. Substitution gives
\eqref{tail}. Each remainder is $e^{-4\tau}$ times a smooth
function of $e^{-2\tau}$ and the parameters, which also proves
the stated derivative estimates. For the round solution,
$s=\sech^2\tau=4e^{-2\tau}+O(e^{-4\tau})$, so $b=2$.
\end{proof}

For $a<-4$, the next lemma gives a direct test for scalar-curvature
positivity: a positive constant $I_a(g)$ already forces $R_g>0$.
\begin{lemma}[Automatic scalar positivity]\label{automatic}
If $a<-4$ and a smooth radial metric on $\Sph^n$ has constant
$I_a(g)=\Lambda>0$, then $R_g>0$.
\end{lemma}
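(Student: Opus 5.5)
The plan is to argue by contradiction at a minimum point of the Schouten trace $\sigma_1(A_g)$, using the closed first-order relation that Lemma~\ref{ode:reduction} and Lemma~\ref{ode:pole-reduction} provide for radial solutions. By Lemma~\ref{ode:reduction}, a smooth radial metric with $I_a(g)=\Lambda$ satisfies $\cK=-\Lambda e^{4v}/n$ along the whole cylinder, and hence $\cK<0$ everywhere when $\Lambda>0$. I will substitute $J=e^{2v}\sigma_1(A_g)$ into \eqref{K-J}. Using $v'=-x$, so that $J'=e^{2v}\bigl(\sigma_1(A_g)'-2x\sigma_1(A_g)\bigr)$, the terms $\pm2x^2J$ cancel. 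Writing $k=e^{-2v}q$ and $c_a:=(n-1)(a+4)/8<0$, this should give the $t$-version of the second equation in \eqref{pole-system}:
\[
 -\frac{\Lambda}{n}=-x\,e^{-2v}\sigma_1(A_g)'+\frac12\sigma_1(A_g)^2-\frac{n-2}{2}k\,\sigma_1(A_g)-c_a k^2 .
\]
This identity holds for all $t\in\R$, with no division by $x$.

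Now suppose $R_g\le0$ somewhere, so $m:=\min_{\Sph^n}\sigma_1(A_g)\le0$. First I treat the case where the minimum is attained at some finite $t_0$, away from the poles. Then $\sigma_1(A_g)'(t_0)=0$, and the identity becomes
\[
 -\frac\Lambda n=\frac12m^2-\frac{n-2}2k m+|c_a|k^2 .
\]
If $k(t_0)\ge0$, every term on the right is nonnegative because $m\le0$. This contradicts $\Lambda>0$, and it covers the borderline case $m=0$ as well.

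The remaining case is the main obstacle: $k(t_0)<0$, which means the slice is locally hyperbolic-like, with $x^2>1$. Here the cross term $-\frac{n-2}{2}km$ is negative, and the quadratic form need not be positive unless $|c_a|>(n-2)^2/8$. That fails for $a$ near $-4$. My first attempt will use the transport identity
\[
 \bigl(e^{(n-2)v}q\bigr)'=-2x\,e^{nv}\sigma_1(A_g),
\]
which follows from $q'=-2xy$ and $J=y+\frac{n-2}2q$. Since $e^{(n-2)v}q\to0$ at both poles, I will integrate this from the nearer pole to $t_0$, splitting according to the sign of $x$. Since $\sigma_1(A_g)\ge m$ throughout, the aim is to bound $|k(t_0)|$ in terms of $|m|$, in the spirit of the averaging formula \eqref{pole-average}, giving $k\ge\frac2n m$. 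A bound of that type keeps the quadratic form in $(m,k)$ positive: with $k=\lambda m$ for $0<\lambda\le 2/n$, one gets $\frac12-\frac{n-2}{2}\lambda>0$. Controlling the sign changes of $x$ between the pole and $t_0$ is where the real work lies. If this fails, I would instead apply the same inequality at a minimum of $J$, or of a suitably weighted combination, where the first-order relation \eqref{K-J} again loses its derivative term.

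The last case is a minimum attained at a pole. There I would use Lemma~\ref{ode:pole}. Since $k(0)=2c/n$ with $c=\sigma_1(A_g)(0)=m$, the pole version of the relation reads
\[
 2\frac{\dd\sigma_1}{\dd s}(0)=-\frac\Lambda n-\frac12m^2+\frac{n-2}{n}m^2+|c_a|\cdot0 ,
\]
with the last term corrected by the exact value of $k(0)^2$. Because $m$ is a minimum, $\dd\sigma_1/\dd s(0)\ge0$. I expect to have to combine this with the smoothness of $\sigma_1(A_g)$ as a function on $\Sph^n$, which makes the first $s$-derivative the Laplacian-type term $\frac1{2n}\Delta_g\sigma_1(A_g)$ at the pole. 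The PDE \eqref{id:mixed-curvature} at that point then reduces the problem to the algebraic sign check above, using $E_g=0$ at a pole by rotational symmetry.
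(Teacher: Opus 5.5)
Your first-order identity $-\Lambda/n=-x\,e^{-2v}\sigma_1(A_g)'+\tfrac12\sigma_1(A_g)^2-\tfrac{n-2}{2}k\,\sigma_1(A_g)-c_ak^2$ is correct. It settles the case $k(t_0)\ge0$ and also the case $m=0$. The remaining case $m<0$, $k(t_0)<0$ is a genuine gap, and the repair you propose cannot close it for $n\ge5$.

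Write $k=\lambda m$ with $\lambda>0$. The right side at $t_0$ is then $m^2\bigl(\tfrac12-\tfrac{n-2}{2}\lambda+|c_a|\lambda^2\bigr)$. Your claim that $\tfrac12-\tfrac{n-2}{2}\lambda>0$ for $0<\lambda\le 2/n$ is false: it holds only for $\lambda<1/(n-2)$, and $1/(n-2)\le 2/n$ when $n\ge4$. At $\lambda=2/n$ the bracket equals $\frac{4-n}{2n}+\frac{4|c_a|}{n^2}$, which is negative for $n\ge5$ once $a$ is close to $-4$. So even the sharp averaging bound $k\ge\frac2n m$ gives no contradiction.

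The pole case breaks for the same reason, because there $k=2m/n$ exactly. Using $A_g^\circ=0$ and $\Delta_g\sigma_1(A_g)\ge0$ in \eqref{ode:scalar-form} only gives $\Lambda\le\frac{B}{2n}m^2$. Here $B=(n-1)a+n^2-4>0$ for $n\ge5$ and $a$ near $-4$, so there is no contradiction when $m<0$. Your displayed pole formula also has the sign of the $k^2$ term wrong: it is $c_ak(0)^2=-4|c_a|m^2/n^2$. A negative minimum of the Schouten trace is the wrong point to test, because the cross term $-\frac{n-2}{2}k\,\sigma_1(A_g)$ survives there with the bad sign.

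The missing idea, which is the paper's argument, is to test at the zeros of $J=e^{2v}\sigma_1(A_g)$, where that cross term vanishes. At $J=0$, \eqref{K-J} gives $xJ'=-\cK-c_aq^2>0$.

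Now take a maximal interval on which $J<0$. This inequality forces $x<0$ at a finite left endpoint and $x>0$ at a finite right endpoint. The limits $x\to\pm1$ give the same signs at infinite ends. Hence $x$ must cross zero upward inside the interval. But at any zero of $x$ one has $q=1$ and $x'=y=J-\frac{n-2}{2}<0$, which is a contradiction.

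So $J\ge0$, and the same inequality excludes finite zeros of $J$. Only then is the pole treated: the only remaining possibility there is $m=0$, and \eqref{ode:scalar-form} gives $\Lambda\le0$.

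Your fallback at a minimum of $J$ does work for $4\le n\le6$. There the coefficient of $-J$ in \eqref{K-J}, namely $2x^2+\frac{n-2}{2}q=\frac{n-2}{2}+\frac{6-n}{2}x^2$, is positive. For $n\ge7$ this coefficient becomes negative when $x^2>(n-2)/(n-6)$, so that route does not cover all dimensions either.
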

\begin{proof}
Since $R_g=2(n-1)e^{-2v}J$, we first prove $J>0$ at finite $t$.
Here $a+4<0$ and $\cK=-\Lambda e^{4v}/n<0$, so~\eqref{K-J} gives
\[
 xJ'=-\cK-\frac{(n-1)(a+4)}8q^2>0
 \qquad\text{whenever }J=0.
\]
In particular, every zero of $J$ is simple.
Suppose $J<0$ on a maximal interval. At a finite left endpoint,
$J'<0$ and hence $x<0$; at a finite right endpoint,
$J'>0$ and hence $x>0$. The same signs hold at infinite endpoints
because $x(t)\to\pm1$ as $t\to\pm\infty$.
Thus $x$ must cross zero from negative to positive inside the
interval. But whenever $x=0$ there,
$x'=y=J-(n-2)/2<0$, which is impossible.
Hence $J\ge0$, and the displayed inequality excludes finite zeros.

At either pole, $R_g\ge0$ by continuity. If it vanished there,
the pole would be a minimum of $\sigma_1(A_g)$, so
$\Delta_g\sigma_1(A_g)\ge0$. Rotational symmetry would also give
$A_g^\circ=0$. Equation~\eqref{ode:scalar-form} would then imply
$I_a(g)\le0$, contradicting $\Lambda>0$.
\end{proof}

\section{A neck construction in every dimension}\label{sec:high-neck}

The use of symmetric radial shooting is motivated by
Gursky--Malchiodi~\cite[Section~4.4]{GM12}. Their existence argument
covers dimension four and $-1<\beta<-1/4$, equivalently
$a<-16/3$; see~\cite[Section~5.2]{GM12}. Here we prove
Theorem~\ref{main:sharpness} for $a=-4-\eta$ with small $\eta>0$
in every dimension $n\ge4$. We perturb an explicit scalar-flat
neck solution and vary one initial value until the resulting
trajectory meets the curve $\mathcal C_\eta^+\subset\Sigma$
defined below.
The corresponding metric then extends smoothly across the pole
at $t\to+\infty$. The required passage
estimates and matching argument are proved below. The neck model
is the Riemannian Schwarzschild metric up to homothety; see
\cite[Section~1]{BrayLee09} for its standard form and
Proposition~\ref{high:neck-limit} for its appearance here.
Write $a=-4-\eta$ and denote the two curves of initial data in
$\Sigma$ from Lemma~\ref{ode:caps} by $\mathcal C_\eta^-$ and
$\mathcal C_\eta^+$, respectively. Their trajectories give metrics
that extend smoothly across the poles at $t\to-\infty$ and
$t\to+\infty$, respectively. The construction uses small $\eta>0$.

\subsection{The limiting neck and its first variation}
At $a=-4$, the scalar-flat solution with initial data
$(x,y,z)=(0,-(n-2)/2,0)$ is
\begin{equation}\label{neck}
 \begin{gathered}
 x_*=-\tanh\left(\frac{(n-2)t}{2}\right),\qquad
 q_*=\sech^2\left(\frac{(n-2)t}{2}\right),\\
 y_*=-\frac{n-2}{2}q_*,\qquad
 v_*=\frac2{n-2}\log\cosh\left(\frac{(n-2)t}{2}\right).
 \end{gathered}
\end{equation}
The corresponding metric has $J=\cK=0$. For $\eta>0$ we
perturb this neck using one shooting parameter $d$:
\begin{equation}\label{data}
 x(0)=z(0)=v(0)=0,\qquad
 y(0)=-\sqrt{\frac{(n-2)^2}{4}-\frac{n-1}{4}\eta+2d\eta^2}.
\end{equation}
Then $\cK(0)=d\eta^2$, so the curvature constant is
$I_{-4-\eta}(g)=-nd\eta^2$. In what follows, $d$ varies over a
fixed compact interval and $\eta>0$ tends to zero.

The model has zero Schouten trace. The next lemma computes
its first variation $j_1$ and proves $j_1>0$. Thus scalar curvature
is positive on each fixed compact time interval for small
$\eta>0$. The limit $L_n$ of $j_1$ will supply the initial datum
for the passage away from the neck.

\Needspace{6\baselineskip}
\begin{lemma}[First variation at the neck]\label{neck:variation}
On every fixed compact interval in $t$, the solutions with initial data
\eqref{data} converge smoothly to \eqref{neck}, and
$\sigma_1(A_g)/\eta\to j_1$ in $C^\infty$, uniformly in $d$.
The function $j_1$ is even, independent of $d$, and satisfies
\begin{equation}\label{first-variation}
 \begin{gathered}
 \left(\cosh^2\left(\frac{(n-2)t}{2}\right)j_1'\right)'
 =\frac{n(n-1)}8\cosh^{-2n/(n-2)}\left(\frac{(n-2)t}{2}\right),\\
 j_1(0)=\frac{n-1}{4(n-2)},\qquad j_1'(0)=0.
 \end{gathered}
\end{equation}
In particular, $j_1>0$, $j_1'(t)>0$ for $t>0$, and
\begin{equation}\label{ell}
 \lim_{t\to\infty}j_1(t)=L_n
 :=\frac{n(n-1)}{2(n-2)^2}\int_0^\infty\sech^{2n/(n-2)}s\,\dd s>0,
 \qquad L_4=1.
\end{equation}
Consequently $J>0$ on each fixed compact interval for small $\eta>0$.
\end{lemma}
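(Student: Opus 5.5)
The plan is a regular perturbation of the ODE system \eqref{system} in the small parameter $\eta$, with the data \eqref{data} as a smooth function of $\eta$.

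\emph{Smooth convergence.} First check that the data \eqref{data} depend smoothly on $\eta$ near $0$ (with $d$ in a compact set), since the square root has argument near $(n-2)^2/4>0$. The right side of \eqref{system} depends polynomially on $(X,a)$, with $B=(n-1)a+n^2-4$. Hence smooth dependence on initial data and parameters (cited from \cite{Teschl}) gives smooth convergence, on each compact $t$-interval and uniformly in $d$, to the solution with data $(0,-(n-2)/2,0)$ at $a=-4$. One verifies directly that \eqref{neck} solves \eqref{system} at $a=-4$, where $J=\cK=0$. By uniqueness this is the limit.

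\emph{The first variation.} Write $\sigma_1(A_g)=e^{-2v}J$. Since $J_*=0$, differentiability in $\eta$ gives $J/\eta\to J_1:=\partial_\eta J|_{\eta=0}$ and $e^{-2v}\to e^{-2v_*}$ in $C^\infty$. So $j_1=e^{-2v_*}J_1$, and $j_1$ is independent of $d$ because $d$ enters the data only at order $\eta^2$. Evenness follows from the reflection symmetry noted after \eqref{round}: the data have $x(0)=z(0)=0$, so $v$, $J$ and $\sigma_1$ are even.

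To derive the equation, I would not linearize \eqref{system} directly. Instead I would linearize the scalar equation \eqref{ode:scalar-form}, written as
\[
 -\Delta_g\sigma_1(A_g)+\frac{\eta}{2}|A_g^\circ|^2+\frac{B}{2n}\sigma_1^2=-nd\eta^2 .
\]
At order $\eta$, the terms $\sigma_1^2$ and $\Lambda$ drop out, and the Laplacian is evaluated at the neck metric. Using the formula $\Delta_g f=e^{-nv}(e^{(n-2)v}f')'$ from Lemma~\ref{ode:curvature-formulas}, this gives
\[
 \bigl(e^{(n-2)v_*}j_1'\bigr)'=\frac{1}{2}e^{nv_*}|A^\circ_*|^2 .
\]
Next insert $e^{(n-2)v_*}=\cosh^2((n-2)t/2)$. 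For the right side, use \eqref{radial-curvatures}:
\[
 e^{4v_*}|A^\circ_*|^2=\frac{n-1}{n}(y_*-q_*)^2=\frac{n-1}{n}\cdot\frac{n^2}{4}q_*^2 .
\]
Combined with $e^{(n-4)v_*}$, this should produce $\frac{n(n-1)}{8}\cosh^{-2n/(n-2)}$; the powers must be checked carefully. For the initial values, $j_1(0)=J_1(0)$ because $v(0)=0$. Here
\[
 J(0)=y(0)+\frac{n-2}{2}=\frac{n-1}{4(n-2)}\eta+O(\eta^2),
\]
obtained by expanding the square root, and $j_1'(0)=0$ follows from evenness.

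\emph{Positivity and the limit.} Integrating once gives $\cosh^2(\cdot)\,j_1'=\int_0^t(\text{positive})$, so $j_1'>0$ for $t>0$. By evenness and $j_1(0)>0$, this gives $j_1>0$. Integrating again,
\[
 L_n=j_1(0)+\int_0^\infty\sech^2\!\Big(\frac{(n-2)t}{2}\Big)\int_0^t\frac{n(n-1)}{8}\cosh^{-2n/(n-2)}\Big(\frac{(n-2)s}{2}\Big)\,\dd s\,\dd t ,
\]
which is finite. Fubini converts it to a single integral. The inner tail is $\int_s^\infty\sech^2=\frac{2}{n-2}(1-\tanh)$, and the outer substitution $\sigma=(n-2)s/2$ follows.

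\emph{Main obstacle.} The delicate point is matching this to the stated closed form \eqref{ell}, because the $j_1(0)$ term must be absorbed. The idea is that the integral $\int\sech^{2n/(n-2)}\tanh$ equals $\frac{n-2}{2n}$ after the substitution. I would verify this identity and also $L_4=1$, using $\int_0^\infty\sech^4=2/3$. Finally, $J=\eta e^{2v}\sigma_1/\eta$ with $j_1>0$ gives $J>0$ on compact intervals for small $\eta$.
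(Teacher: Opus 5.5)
Your proposal is correct and follows essentially the same route as the paper. It uses smooth dependence on the data and on $a$ for the convergence, linearizes the scalar form \eqref{ode:scalar-form} at $\eta=0$ to get $\Delta_{g_*}j_1=\tfrac12|A_{g_*}^\circ|^2$, expands $y(0)$ to obtain $j_1(0)$, and uses Fubini so that $\int_0^\infty\sech^{2n/(n-2)}s\,\tanh s\,\dd s=(n-2)/(2n)$ cancels $j_1(0)$. The checks you flagged do work out: $e^{(n-4)v_*}q_*^2=\cosh^{-2n/(n-2)}((n-2)t/2)$, and with $w=\cosh s$ the $\tanh$-integral equals $1/p$ for $p=2n/(n-2)$.
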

\begin{proof}
Smooth dependence of~\eqref{system} on its initial data gives
convergence to the model. Since its Schouten trace is zero,
\[
 j_1:=\left.\partial_\eta\sigma_1(A_g)\right|_{\eta=0}
\]
satisfies $\sigma_1(A_g)/\eta\to j_1$ smoothly on fixed compact
intervals. The parameter $d$ enters~\eqref{data} only at order
$\eta^2$, so it does not affect $j_1$.

We compute $j_1$ by differentiating~\eqref{ode:scalar-form}
at $\eta=0$. This gives
$\Delta_{g_*}j_1=|A_{g_*}^\circ|_{g_*}^2/2$, where $g_*$ is
specified by $v_*$. The other differentiated terms vanish:
the variation of the Laplacian acts on the zero Schouten trace,
the squared trace has zero first variation, and
$\left.\partial_\eta(-nd\eta^2)\right|_{\eta=0}=0$. Since
$y_*-q_*=-nq_*/2$, \eqref{radial-curvatures} gives
\[
 j_1''-(n-2)x_*j_1'
 =\frac{n(n-1)}8e^{-2v_*}q_*^2.
\]
Multiplication by $\cosh^2((n-2)t/2)$ gives
\eqref{first-variation}. Expanding~\eqref{data} gives
$y(0)=-(n-2)/2+(n-1)\eta/(4(n-2))+O(\eta^2)$.
Since $v(0)=0$ and $q(0)=1$, this gives the stated value of
$j_1(0)$; evenness gives $j_1'(0)=0$.

The right-hand side of~\eqref{first-variation} is positive.
Integration from zero therefore gives $j_1'>0$ for $t>0$;
together with $j_1(0)>0$ and evenness, this gives $j_1>0$.
Integrating once more and interchanging the nonnegative
integrals gives the finite limit
\[
 j_1(\infty)=j_1(0)+\frac{n(n-1)}{2(n-2)^2}
 \int_0^\infty\sech^{2n/(n-2)}s(1-\tanh s)\,\dd s.
\]
The integral containing $\tanh s$ equals $(n-2)/(2n)$, and its
contribution cancels $j_1(0)$. This proves \eqref{ell}; for $n=4$,
$\int_0^\infty\sech^4s\,\dd s=2/3$ gives $L_4=1$.
Finally, $J=e^{2v}\sigma_1(A_g)$, so positivity of $j_1$ and the compact-interval
convergence imply $J>0$.
\end{proof}

\subsection{An elementary neck passage}
The preceding lemma controls only fixed time intervals.
To follow the solution farther from the neck, we use $v$ as
the independent variable wherever $v'=-x>0$. Set
\begin{equation}\label{scale}
 h:=\eta e^{2v},\qquad j:=\eta^{-1}\sigma_1(A_g).
\end{equation}
Here $h(v)=\eta e^{2v}$; only $q$ and $j$ are unknown functions.
Geometrically, $h$ and $j$ are the squared radius and Schouten
trace of the rescaled metric $\eta g$. Equations
\eqref{K-J} and \eqref{transport}, with $\cK=dh^2$, become
\begin{equation}\label{passage-equations}
 \begin{split}
 q_v+(n-2)q&=2hj,\\
 j_v&=\frac{\frac{n-2}{2}qj+h(d-j^2/2)
                   -\frac{n-1}{8}e^{-2v}q^2}{1-q}.
 \end{split}
\end{equation}
In the variable $h$, $\partial_v=2h\partial_h$ and $e^{-2v}=\eta/h$.
These are precisely \eqref{pole-system} for the rescaled metric
$\eta g$, with $s=h$, $k=q/h$, $\sigma_1(A_{\eta g})=j$,
$\Lambda=-nd$, and $a=-4-\eta$.
This form has a well-defined limit at $\eta=0$.

The next lemma follows the solution until the rescaled squared
radius $h$ reaches a small fixed value $\rho$. It keeps the
Schouten trace positive throughout this passage and identifies
the limiting exit data. These data will then be continued to
$\Sigma$ for matching.

\Needspace{16\baselineskip}
\begin{lemma}[Uniform passage]\label{passage}
Fix a compact interval of $d$ values. For every sufficiently small
fixed $\rho>0$, the shots \eqref{data} reach $h=\rho$ with $x<0$
and $j>0$ for all sufficiently small $\eta>0$. Their exit data
converge uniformly in $d$ to the solution of
\eqref{passage-equations}, written in the variable $h$ at
$\eta=0$, characterized by
\[
 j(0)=L_n,\qquad \lim_{h\downarrow0}\frac{q(h)}h=\frac{2L_n}{n}.
\]
This limiting solution is the outgoing smooth-pole branch of
Lemma~\ref{ode:pole}, with curvature constant $-nd$ and pole
Schouten trace $L_n$ for the limiting rescaled metric.
\end{lemma}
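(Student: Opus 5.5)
The plan is a two-region matching argument. Fix a large time $T$ and split the outgoing half of the shot at $t=T$. On $[0,T]$ I use Lemma~\ref{neck:variation}. On the rest, up to $h=\rho$, I use \eqref{passage-equations} with $v$ as the independent variable. This is legitimate there because $x<0$, and $x$ cannot vanish as long as $q<1$, so $v'=-x>0$.

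\emph{Inner region.} On $[0,T]$ the solutions converge smoothly to the model \eqref{neck}, uniformly in $d$. Hence at $t=T$:
\begin{itemize}
\item $j=\sigma_1(A_g)/\eta$ equals $j_1(T)+o_\eta(1)$, and $j_1(T)\to L_n$ as $T\to\infty$;
\item $q(T)=q_*(T)+o(1)\le Ce^{-(n-2)T}$;
\item $v(T)=v_*(T)+o(1)=T+O(1)$, so the entry value $h_T:=\eta e^{2v(T)}$ is of order $\eta e^{2T}$ and tends to $0$ as $\eta\downarrow0$.
\end{itemize}

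\emph{Outer region: bootstrap.} From $v(T)$ until $h=\rho$ I run a continuity argument with the hypotheses
\[
 |j-L_n|\le\delta,\qquad q\le\tfrac12 .
\]
Under them, the first equation in \eqref{passage-equations} integrates by variation of constants:
\[
 q(v)=q(v_T)e^{-(n-2)(v-v_T)}+\int_{v_T}^{v}e^{-(n-2)(v-w)}\,2h(w)j(w)\,\dd w .
\]
\begin{itemize}
\item The integral term is at most $Ch$, because $h=\eta e^{2w}$ grows like $e^{2w}$ while the kernel decays like $e^{-(n-2)(v-w)}$.
\item The homogeneous term equals $q(v_T)(h_T/h)^{(n-2)/2}$. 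It is $O(e^{-(n-2)T})$ throughout, and at $h=\rho$ it is bounded by $e^{-(n-2)T}(\eta e^{2T}/\rho)^{(n-2)/2}\to0$. This is the nonsmooth branch $k\sim s^{-n/2}$ of \eqref{pole-system}, and it decays in the outer variable.
\end{itemize}
So $q\le C\rho+Ce^{-(n-2)T}$, which recovers $q\le\frac12$ once $\rho$ is small and $T$ large.

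For $j_v$, I split the terms:
\begin{itemize}
\item $\frac{n-2}{2}qj/(1-q)$ and $h(d-j^2/2)/(1-q)$ integrate in $v$ to $O(\rho)+O(e^{-(n-2)T})$.
\item The $\eta$-dependent term $\frac{n-1}{8}\eta q^2/h$ is split along the same two-part decomposition of $q$. The homogeneous part gives $\eta\,q(v_T)^2e^{-2(n-2)(v-v_T)}/(\eta e^{2v})$, whose integral from $v_T$ is $O(e^{-2(n-1)T})$. The particular part gives $O(\eta h)$.
\end{itemize}
So $j$ moves by at most $O(\rho)+O(e^{-(n-2)T})+o_\eta(1)$. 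This closes the bootstrap for $\rho$ small, $T$ large, then $\eta$ small, and gives $j>0$ and $x=-\sqrt{1-q}<0$ up to $h=\rho$.

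\emph{Identifying the limit.} I pass to $\eta\to0$ in the $h$-variable, where \eqref{passage-equations} becomes \eqref{pole-system} with $a=-4$ and $\Lambda=-nd$. The estimates above show that, in the limit,
\begin{itemize}
\item $k=q/h$ is bounded on $(0,\rho]$ (the homogeneous part disappears), and
\item $j\to L_n$ as $h\downarrow0$ (let $T\to\infty$ after $\eta\to0$).
\end{itemize}
To make this rigorous I compare the shot on $[h_T,\rho]$ with the fixed-point operator $\mathcal T$ from Lemma~\ref{ode:pole}, centered at $c=L_n$. The shot satisfies the fixed-point equation up to errors that vanish in the limit: the homogeneous $q$ term, the $\eta q^2/h$ term, and the defect $j(h_T)-L_n$. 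The contraction bound $\|\mathcal Tf-\mathcal T\widetilde f\|\le L\varepsilon\|f-\widetilde f\|$ then gives uniform convergence to the unique solution with bounded $k$ and $j(0)=L_n$. That solution is the outgoing smooth-pole branch, and it gives $\lim_{h\downarrow0}q(h)/h=k(0)=2L_n/n$. Uniformity in $d$ follows because every constant depends only on the compact $d$-interval.

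\emph{Main obstacle.} The overlap zone just past $t=T$ is where the work lies. There $q$ is still dominated by the neck's homogeneous mode rather than by $2hj/n$. The difficulty is to show that this mode neither contaminates $j$ through the $\eta q^2/h$ term nor survives to $h=\rho$. I expect the explicit variation-of-constants bounds above to suffice, with the order of limits chosen as: $\rho$ small, then $T$ large, then $\eta$ small.
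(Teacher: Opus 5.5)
Your proposal is correct. The inner/outer split, the variation-of-constants formula for $q$, and the bootstrap closed in the order $\rho$ small, entrance large, $\eta$ small are the paper's argument. The paper enters at a height $v=V$ with $v_*(t_V)=V$ rather than at a time $T$, which is equivalent. It also disposes of the $\eta$-term more cheaply via $e^{-2v}q^2\le q$ for $v\ge0$, so your homogeneous/particular split there is correct but unnecessary.

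You genuinely differ in the identification of the limit. The paper argues by compactness. Along $\eta_k\downarrow0$, $d_k\to d$, the $h$-equations have uniformly bounded smooth right sides on each $[\epsilon,\rho]$, so a diagonal subsequence converges in $C^\infty_{\rm loc}((0,\rho])$. The bootstrap bounds give $q=O(h)$ and $j=L_n+O(h)$ for the limit, and the uniqueness clause of Lemma~\ref{ode:pole} identifies it. Uniformity in $d$ then comes from a separate contradiction argument.

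You instead use the contraction estimate of Lemma~\ref{ode:pole} as a stability bound for an approximate fixed point. This buys uniform-in-$d$ convergence at once, with an explicit error of order $|j_1(T)-L_n|+e^{-(n-2)T}+o_\eta(1)$ and no subsequence extraction. The price is bookkeeping you only gesture at. First, you must extend $j$ to $[0,h_T]$, say by the constant $j(h_T)$, so that $\mathcal A$ and $\mathcal T$ are defined. Second, the homogeneous mode is small only after integration: near $h_T$ it contributes $k\approx q(h_T)/h_T\to\infty$, so your error terms do not vanish in sup norm. This is enough, because the integrand of $\mathcal T$ is uniformly Lipschitz in $k$ while $hk\le\frac12$. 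With $A=d-j^2/2$ and $B=\frac{n-2}{2}j$,
\[
 \frac{A+Bk}{2(1-hk)}-\frac{A+B\tilde k}{2(1-h\tilde k)}
 =\frac{(B+hA)(k-\tilde k)}{2(1-hk)(1-h\tilde k)},
\]
and, with $k=q/h$ taken from the shot and $\tilde j$ the extension, $\int_{h_T}^{\rho}|k-\mathcal A\tilde j|\,\dd h=O(q(h_T)+h_T)$. The $\eta k^2$ term integrates to $O(q(h_T)^2e^{-2v_T}+\eta\rho)$. With these two points written out, your route is a valid, more quantitative replacement for the paper's compactness step.
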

\begin{proof}
We first show that the solutions reach $h=\rho$ while $q$
stays small and $j$ stays positive. Choose a large fixed height
$V$, and let $t_V>0$ satisfy $v_*(t_V)=V$.
Since $v_*'(t_V)>0$, nearby solutions also reach $v=V$ at a
nearby time. At this entrance,
Lemma~\ref{neck:variation} gives, uniformly in $d$,
\[
 q_\eta(V)\longrightarrow e^{-(n-2)V},\qquad
 j_\eta(V)\longrightarrow j_1(t_V).
\]
For large $V$ and small $\eta$, these entrance data have
$q_\eta(V)>0$ and $j_\eta(V)$ close to $L_n$.
As long as $q<1/2$, $h\le\rho$, and $L_n/2\le j\le2L_n$,
the first equation in~\eqref{passage-equations} gives
\[
 q(v)=q(V)e^{-(n-2)(v-V)}
       +2\int_V^v e^{-(n-2)(v-s)}h(s)j(s)\,\dd s.
\]
The identity $h(s)=h(v)e^{-2(v-s)}$ bounds the integral.
Together with the second equation, it gives
\begin{equation}\label{passage-bounds}
 \begin{split}
 0\le q(v)&\le q(V)e^{-(n-2)(v-V)}+Ch(v),\\
 |j(v)-j(V)|&\le C\bigl(q(V)+h(v)\bigr).
 \end{split}
\end{equation}
The integral formula gives $q\ge0$. For $v\ge V\ge0$,
$e^{-2v}q^2\le q$, so the second equation gives
$|j_v|\le C(q+h)$. The first estimate also gives
\[
 \int_V^v q(w)\,\dd w\le\frac{q(V)}{n-2}+Ch(v),\qquad
 \int_V^v h(w)\,\dd w\le\frac12h(v).
\]
These inequalities prove the second estimate. Here $C$ is
uniform in $d$ and independent of $\eta$ and sufficiently large
$V$. Fix $\rho>0$ small, then take $V$ large
and $\eta$ small enough that $\eta e^{2V}<\rho$ and
\[
 q_\eta(V)+C\rho<\tfrac14,\qquad
 |j_\eta(V)-L_n|+C\bigl(q_\eta(V)+\rho\bigr)<\tfrac14L_n.
\]
Thus~\eqref{passage-bounds} gives the stronger bounds
$q<1/4$ and $3L_n/4<j<5L_n/4$. The solution cannot leave
the assumed range before reaching $h=\rho$, so it continues
with $x=-\sqrt{1-q}<-1/\sqrt2$ throughout the passage.
Since $h'=-2xh\ge\sqrt2h$, it reaches $h=\rho$ in finite time.
The same fixed $\rho$ works for every sufficiently large fixed
$V$, after decreasing $\eta$ as needed.

We next identify the limiting exit data. Take any sequences
$\eta_k\downarrow0$ and $d_k\to d$. On each $[\epsilon,\rho]$
with $\epsilon>0$, the equations in $h$ have uniformly bounded smooth
right-hand sides, since $q\le1/4$, $j$ is bounded, and $h\ge\epsilon$.
A diagonal subsequence converges in $C^\infty$ on compact
subintervals of $(0,\rho]$ to one solution of~\eqref{passage-equations}
with $\eta=0$.
For fixed $V$ and $h>0$, the initial term in the first estimate is
\[
 q_\eta(V)\left(\frac{\eta e^{2V}}h\right)^{(n-2)/2}\longrightarrow0.
\]
Hence $q(h)=O(h)$. The second estimate gives
$|j(h)-j_1(t_V)|\le C(e^{-(n-2)V}+h)$. Since the same bound
holds for every sufficiently large fixed $V$, we can let
$V\to\infty$ and obtain $j(h)=L_n+O(h)$.
Thus $(q/h,j)$ solves~\eqref{pole-system} with $s=h$, $a=-4$,
and $\Lambda=-nd$, with $q/h$ bounded and $j\to L_n$.
The uniqueness statement in Lemma~\ref{ode:pole} identifies
this limit, and~\eqref{pole-average} gives $q/h\to2L_n/n$.
If uniform convergence in $d$ failed, a sequence in the compact
parameter interval would have a convergent subsequence.
The uniqueness just proved and the continuous dependence of the
limiting pole solution on $d$ would give a contradiction.
\end{proof}

The original exit state is recovered smoothly from
\begin{equation}\label{recovery}
 x=-\sqrt{1-q},\qquad y=hj-\frac{n-2}{2}q,\qquad z=-x y_v.
\end{equation}
Hence the same convergence holds for the original shooting
trajectories and their subsequent finite-time continuations.

\Needspace{12\baselineskip}
\subsection{Scalar shooting and smooth closure}
The passage lemma lets us compare the shooting data with
$\mathcal C_0^-$ at $\eta=0$. To obtain smooth closure at the
other pole, we seek perturbed data on $\mathcal C_\eta^+$.
The next lemma shows that the two limiting curves cross with
different tangent directions. This makes the required match
persist for small $\eta>0$.

\begin{lemma}[Transverse shooting]\label{shooting}
The curves $\mathcal C_0^-$ and $\mathcal C_0^+$ meet transversely
at $(0,1,0)$. For every sufficiently small $\eta>0$, there is a
parameter $d_\eta$ such that the shot \eqref{data} meets
$\mathcal C_\eta^+$ at its first upward return to $x=0$.
Moreover $d_\eta\to d^*$, where the limiting round solution is
\begin{equation}\label{critical-d}
 d^*:=-\frac{n-4}{2n}L_n^2,\qquad
 j\equiv L_n,\qquad q=\frac{2L_n}{n}h.
\end{equation}
\end{lemma}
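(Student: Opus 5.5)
Our plan is to work entirely at the limit $\eta=0$ first, where Lemma~\ref{passage} identifies the exit data of the shot \eqref{data} with the outgoing smooth-pole branch of Lemma~\ref{ode:pole} at $a=-4$, with pole Schouten trace $L_n$ and curvature constant $\Lambda=-nd$. After undoing the rescaling $h=\eta e^{2v}$, and since the state $X$ is invariant under homothety, the exit state at $h=\rho$ is exactly a state on a trajectory smooth at the pole $t\to-\infty$ (the reversed orientation, since $x<0$ on the outgoing side corresponds under $t\mapsto -t$ to the first family of Lemma~\ref{ode:caps}). Continuing forward to the first upward crossing of $x=0$ therefore lands, in the limit, on $\mathcal C_0^-$. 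As $d$ ranges over a compact interval, the rescaled constant $\Lambda/c^2$ with $c=L_n$ fixed sweeps a neighbourhood of the round value. The round value is attained when $(q/h,j)=(2L_n/n,L_n)$ is constant. Plugging a constant solution into \eqref{passage-equations} at $\eta=0$ gives $j_v=0$ iff
\[
 \tfrac{n-2}{2}\cdot\tfrac{2L_n}{n}L_n+d-\tfrac12L_n^2=0,
\]
i.e.\ $d=d^*=-\frac{n-4}{2n}L_n^2$, which is \eqref{critical-d}. So the limiting map $d\mapsto P_0(d)\in\mathcal C_0^-$ is a smooth parametrization of $\mathcal C_0^-$ near $(0,1,0)$ with $P_0(d^*)=(0,1,0)$ and nonzero velocity, the latter by the $\partial\cK/\partial\Lambda<0$ computation in the proof of Lemma~\ref{ode:caps}.

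The next step is transversality of $\mathcal C_0^-$ and $\mathcal C_0^+$ at $(0,1,0)$. By the reflection symmetry in Lemma~\ref{ode:caps}, $\mathcal C_0^+$ is the image of $\mathcal C_0^-$ under $(0,y,z)\mapsto(0,y,-z)$. Hence the two curves are transverse iff the tangent vector $(0,\delta y,\delta z)$ of $\mathcal C_0^-$ has both $\delta y\neq0$ and $\delta z\neq0$. Tangency could occur only if the tangent is purely vertical or purely horizontal. We would linearize \eqref{system} along $X_{\rm rd}$ in the direction of varying $\Lambda$ at fixed pole trace. This gives a variation $\dot J$ (equivalently $\dot\sigma_1$) of the Schouten trace satisfying a linear scalar equation obtained by differentiating \eqref{ode:scalar-form} at the round metric, namely $-\Delta_{g_{\rm rd}}\dot\sigma_1+\frac{B}{n}\sigma_1\dot\sigma_1=\dot\Lambda$ plus terms quadratic in $A^\circ$ that vanish. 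This holds on the half-sphere with $\dot\sigma_1(\text{pole})=0$. This is where the paper's announced ``scalar maximum principle'' enters. Since the zeroth-order coefficient is positive for $a$ near $-4$ with $n\ge4$ (use the round value $\sigma_1=n/2$; for $n=4$ take care since $B=0$ and argue directly with $-\Delta\dot\sigma_1=\dot\Lambda$), the maximum principle forces $\dot\sigma_1$ to have a strict sign up to the equator, with nonvanishing normal derivative there by Hopf. Translating via $J=s\sigma_1$, $y=J-\frac{n-2}{2}q$, $z=y'$ and the equatorial section $x=0$ will show that both $\delta y$ and $\delta z$ are nonzero. Nonvanishing of $\delta z$ comes from Hopf. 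For $\delta y$, combine the sign of $\dot\sigma_1$ with the induced variation of $q$, computed from the first equation of \eqref{pole-system} via the averaging formula \eqref{pole-average}, which has the same sign. I expect this sign bookkeeping, especially making $\delta y$ and $\delta z$ simultaneously nonzero and handling the time shift of the crossing, to be the main obstacle.

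Finally the intermediate value argument. Let $F_\eta(d)$ be a signed function on $\Sigma$ near $(0,1,0)$ measuring the side of $\mathcal C_\eta^+$, say the $z$-offset from the curve written as a graph over $y$ (this is legitimate since $\mathcal C_\eta^+$ is transverse to the $z$-direction). Evaluate it at the first upward return of the shot \eqref{data}. By Lemma~\ref{passage}, \eqref{recovery} and smooth finite-time dependence, $F_\eta\to F_0$ uniformly on a compact $d$-interval around $d^*$. By transversality $F_0$ changes sign at $d^*$, with nonzero derivative. Hence for small $\eta$ there is $d_\eta$ with $F_\eta(d_\eta)=0$ and $d_\eta\to d^*$. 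Taking $\mathcal C_\eta^+\to\mathcal C_0^+$ smoothly in $a$ from Lemma~\ref{ode:caps} completes the matching claim.
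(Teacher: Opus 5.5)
Reducing transversality to $\delta y\,\delta z\ne0$ is correct; it uses the reflection $(0,y,z)\mapsto(0,y,-z)$, which exchanges the two curves. Computing the tangent of $\mathcal C_0^-$ from the linearized equation $-\Delta\dot\sigma_1+\frac{n(n-4)}2\dot\sigma_1=\dot\Lambda$ with $\dot\sigma_1(\text{pole})=0$ is also a legitimate route, different from the paper's. The gap is at the decisive step. What you defer as the ``main obstacle'' is the whole content of the transversality claim, and the sign bookkeeping you sketch for $\delta y$ would not close it.

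On $\Sigma$ one has $q=1$, hence $y=J-\frac{n-2}2$ and $z=J'=s\,\partial_t\sigma_1$. Along the round orbit $\sigma_1\equiv n/2$ and $k\equiv1$. So the shift of the crossing time, and the difference between fixed-$s$ and fixed-point variations, contribute nothing at first order. At the equator ($s=1$) this gives
\[
 \delta z=\partial_t\dot\sigma_1,\qquad
 \delta y=\dot\sigma_1-\frac n2\dot k
 =\frac n2\int_0^1r^{n/2-1}\bigl(\dot\sigma_1(1)-\dot\sigma_1(r)\bigr)\,\dd r .
\]
Here $\dot\sigma_1$ is viewed as a function of $s\in[0,1]$, and $\dot k$ comes from linearizing \eqref{pole-average}. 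Since $\dot k$ has the same sign as $\dot\sigma_1$, the two terms in $\delta y$ have opposite signs. Knowing the sign of $\dot\sigma_1$ and of the induced variation of $q$ therefore decides nothing.

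What you need is strict monotonicity of $\dot\sigma_1$ in the distance $\phi$ from the pole. This follows from your equation, but not from a maximum principle on the hemisphere, since you have no boundary data at the equator. Instead, apply the maximum principle on every geodesic ball about the pole, using $\dot\sigma_1(\text{pole})=0$. Alternatively, integrate $(\sin^{n-1}\!\phi\,f')'=\sin^{n-1}\!\phi\,\bigl(\frac{n(n-4)}2f-\dot\Lambda\bigr)$ outward from the pole. Either way $\dot\sigma_1$ is strictly monotone with nonzero radial derivative at the equator, so both $\delta y$ and $\delta z$ are nonzero.

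With this repair, your route yields an explicit tangent direction for $\mathcal C_0^\pm$, which is more than the paper needs. The paper avoids this bookkeeping by arguing by contradiction. A common tangent would glue the two one-sided variations into a solution $Y$ of the linearization along $X_{\rm rd}$ on all of $\R$, decaying at both ends. The variation $e=Y_2+2\tanh t\,Y_1$ of $y-q$ satisfies \eqref{round:transversality}, whose zeroth-order coefficient is strictly positive, so $e\equiv0$. Then $Y_1(0)=0$ forces $Y\equiv0$. That argument needs neither the reflection reduction, nor the averaging formula, nor the crossing-time analysis.

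The rest of your proposal matches the paper: the value of $d^*$, the regular parametrization $d\mapsto P_0(d)$ of $\mathcal C_0^-$, and the intermediate value argument giving $d_\eta\to d^*$. You should still verify, as the paper does, that the crossing you evaluate is the \emph{first} upward return to $x=0$, i.e.\ that $x<0$ persists from the neck through the passage.
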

\begin{proof}
We first prove transversality. Suppose the two curves share a
nonzero tangent vector at the round point. Vary the initial
point along each curve with this tangent vector and differentiate
the corresponding trajectories. The two variations agree at
$t=0$, so uniqueness for the linearized system joins them into
a solution $Y:=(Y_1,Y_2,Y_3)$ along~\eqref{round} on all of $\R$.
The uniform pole estimates in Lemma~\ref{ode:caps} give
$Y\to0$ at both ends.

Set $e:=Y_2+2\tanh t\,Y_1$, the first variation of $y-q$
along the round trajectory. To find an equation for $e$, note that
$(y-q)'=z+2xy$. Differentiation and \eqref{system} then give
\[
 \begin{split}
 (y-q)''+(6-n)x(y-q)'
 =\bigl[&4(n-2)x^2+(B/2+2n-2)q\\
        &+2(n-2)(y-q)\bigr](y-q).
 \end{split}
\]
On the round orbit $y-q=0$, and at $a=-4$ we have $B=n(n-4)$.
Linearizing this identity therefore gives
\begin{equation}\label{round:transversality}
 e''+(6-n)\tanh t\,e'
 =(n-2)\left(4\tanh^2t+\frac{n+2}{2}\sech^2t\right)e.
\end{equation}
The coefficient on the right is strictly positive. Since
$e(\pm\infty)=0$, any positive maximum or negative minimum
is attained at a finite point and contradicts this equation.
Hence $e\equiv0$. The first equation of the linearized system
now gives $Y_1'=Y_2=-2\tanh t\,Y_1$. Since the initial points
stay in $\Sigma=\{x=0,\ y>0\}$, we have $Y_1(0)=0$.
Thus $Y_1=Y_2=0$, and the second equation gives $Y_3=Y_2'=0$.
This contradicts the choice of a nonzero tangent vector and
proves transversality.

We now use this crossing to choose the shooting parameter.
The formulas in~\eqref{critical-d} follow by substitution in
the limiting equations \eqref{passage-equations}. For $d$ in a
sufficiently small fixed interval about $d^*$, follow the exit
states from Lemma~\ref{passage} to $\Sigma$ and denote their
crossings by $P_\eta(d)$. These crossings exist because the
limiting round trajectory crosses $\Sigma$ with $y>0$.
They are the first upward returns to $x=0$: indeed,
$y(0)\to-(n-2)/2$ gives $y<0$ on a common short initial interval,
so $x<0$ there for $t>0$. On the remaining fixed interval up to
the entrance, convergence to $x_*<0$ preserves this sign.
Lemma~\ref{passage} keeps $x<0$ through the passage. The segment
from its exit to $\Sigma$ is close to the compact round segment
where $y>0$, so the crossing exists, is transverse, and is the
first upward return to $x=0$.
Continuous dependence and transversality give continuity in $d$
and uniform convergence $P_\eta\to P_0$ on that interval.

By Lemma~\ref{passage}, $P_0(d)$ is the intersection with $\Sigma$
of a trajectory whose metric has a smooth pole at $t\to-\infty$,
with pole Schouten trace $L_n$ and curvature constant $-nd$.
Multiplying this metric by $2L_n/n$ changes the pole trace to
$n/2$ and the curvature constant to $-n^3d/(4L_n^2)$, while
leaving $X$ unchanged. Lemma~\ref{ode:caps} therefore shows that
$d\mapsto P_0(d)$ parameterizes a regular arc of
$\mathcal C_0^-$, with $P_0(d^*)=(0,1,0)$.

Restrict the parameter interval so that the limiting curve
$P_0(d)$ meets $\mathcal C_0^+$ only at $d=d^*$.
By transversality, sufficiently close fixed values
$d_-<d^*<d_+$ in this interval place $P_0(d_-)$ and $P_0(d_+)$
on opposite local sides of $\mathcal C_0^+$. Uniform convergence of $P_\eta$
and smooth dependence of $\mathcal C_\eta^+$ preserve these
sides for small $\eta>0$. Locally on $\Sigma$, choose a smooth
function, depending smoothly on $\eta$, that vanishes exactly
on $\mathcal C_\eta^+$ and has opposite signs on its two sides.
Evaluating it at $P_\eta(d)$ gives a continuous function of $d$
with opposite signs at $d_-$ and $d_+$. The intermediate value
theorem yields
$d_\eta\in(d_-,d_+)$ with
$P_\eta(d_\eta)\in\mathcal C_\eta^+$.
Any subsequential limit of $d_\eta$ lies in $[d_-,d_+]$ and,
by uniform convergence, gives an intersection of $P_0$ with
$\mathcal C_0^+$. Our choice of interval forces that limit to be
$d^*$, proving $d_\eta\to d^*$.
\end{proof}

\begin{proof}[Completion of Theorem~\ref{main:sharpness}]
By Lemma~\ref{shooting}, the shot with $d=d_\eta$ reaches
$\mathcal C_\eta^+\subset\Sigma$. ODE uniqueness identifies its
continuation with the trajectory through the same point from
Lemma~\ref{ode:caps}. The two reconstructed functions $v$ differ
only by a constant, so the metrics differ by a positive constant
factor. Lemma~\ref{ode:pole} therefore gives smooth extension
across the pole at $t\to+\infty$.
The data $x(0)=z(0)=0$ and reversibility make $v$ even and close
the left end. The conformal factor is smooth and positive at both
poles by Lemma~\ref{ode:pole}.

We check scalar positivity along the whole trajectory.
Positivity of $J$ holds on the initial fixed interval by
Lemma~\ref{neck:variation} and through the passage by Lemma~\ref{passage}.
First choose a uniform large $\tau_0$ so that \eqref{tail} gives
$J=nb e^{-2\tau}+O(e^{-4\tau})>0$ for $\tau\ge \tau_0$,
using the positive lower bound for $b$. The portion from the fixed
exit $h=\rho$ to $\tau=\tau_0$ is a compact segment of the matched
trajectory; convergence to the round trajectory makes $J>0$
there for small $\eta$. At the poles, the Schouten trace is
positive by Lemma~\ref{ode:pole}. Thus $R_g>0$ everywhere, also in
dimension four. At the center $y(0)<0<q(0)=1$, so
\eqref{radial-curvatures} gives $A_g^\circ\ne0$ and the metric is
non-Einstein. Denote the metric with $v(0)=0$ by $\widehat g_\eta$.

It remains to fix the scale of the metric. If $n\ge5$, then
$d^*<0$ and hence $d_\eta<0$ for small $\eta$. Multiplying
$\widehat g_\eta$ by $\bigl(-8d_\eta\eta^2/(n(n-4))\bigr)^{1/2}$ gives
the stated curvature normalization.

\Needspace{6\baselineskip}
In dimension four, the normalization uses a total-curvature identity.
To derive it, let $g=u^2g_0$, $u>0$, on a closed connected
four-manifold $M$, with $g_0$ Einstein and $I_a(g)=\Lambda$.
For the conformal variation $g_t\coloneqq e^{2t\varphi}g$,
where $\varphi$ is smooth and $t$ is the variation parameter,
\eqref{id:conformal-schouten} gives
\[
 \left.\frac{\dd}{\dd t}\right|_0\sigma_2(A_{g_t})
 =-4\varphi\sigma_2(A_g)-\langle T_1(A_g),\nabla_g^2\varphi\rangle_g.
\]
Together with \eqref{id:conformal-volume}, \eqref{id:newton}, and
\eqref{id:green}, this gives
\[
 \left.\frac{\dd}{\dd t}\right|_0\int_M\sigma_2(A_{g_t})\,\dd v_{g_t}
 =-\int_M\langle T_1(A_g),\nabla_g^2\varphi\rangle_g\,\dd v_g=0.
\]
This derivative vanishes along every smooth conformal path, so the
total $\sigma_2$-curvature is conformally invariant. In dimension
four, \eqref{intro:Q} gives
$Q_g=-\Delta_g\sigma_1(A_g)+4\sigma_2(A_g)$, while
\eqref{id:sigma} gives $\sigma_2(A_{g_0})=R_{g_0}^2/96$.
Integration therefore yields
\begin{equation}\label{id:four-total}
 \Lambda\Vol_g(M)=(a+4)\int_M\sigma_2(A_g)\,\dd v_g
 =\frac{(a+4)R_{g_0}^2}{96}\Vol_{g_0}(M).
\end{equation}
Applying this identity to $\widehat g_\eta$ and the unit round
background, with $a=-4-\eta$, gives
\begin{equation}\label{four-total}
 (-4d_\eta\eta^2)\Vol_{\widehat g_\eta}(\Sph^4)=-4\pi^2\eta.
\end{equation}
In particular, $d_\eta>0$ in dimension four.
Rescale the metric to volume $|\Sph^4|=8\pi^2/3$. Equation~\eqref{four-total} gives
$I_{-4-\eta}(g_{4,\eta})=-3\eta/2$.
\end{proof}

\Needspace{18\baselineskip}
\subsection{Volume, normalization, and the geometric neck limit}
We now quantify the shrinking neck in the normalized metrics.
The next proposition computes its radius and identifies the
metric seen after rescaling that radius to one.

\begin{proposition}[Neck scale and geometric limit]\label{high:neck-limit}
For the normalized metrics of Theorem~\ref{main:sharpness}, write
$g_{n,\eta}=e^{2v_\eta}(\dd t^2+g_{\Sph^{n-1}})$, with the neck center
at $t=0$. The induced metric there is $e^{2v_\eta(0)}g_{\Sph^{n-1}}$,
so its radius is $e^{v_\eta(0)}$. As $\eta\downarrow0$,
\begin{equation}\label{geometric-limit}
 e^{-2v_\eta(0)}g_{n,\eta}\longrightarrow
 \cosh^{4/(n-2)}\!\left(\frac{(n-2)t}{2}\right)(\dd t^2+g_{\Sph^{n-1}})
 \quad\hbox{in }C^\infty_{\rm loc}(\R\times\Sph^{n-1}),
\end{equation}
and the limit is complete, scalar flat, with two asymptotically
Euclidean ends. The fourth power of the central radius satisfies
\begin{equation}\label{central-scale}
 e^{4v_\eta(0)}\sim
 \begin{cases}\eta^2/8,&n=4,\\ (4L_n^2/n^2)\eta^2,&n\ge5.
 \end{cases}
\end{equation}
At the center $K_{\rm tan}\to+\infty$ and $K_{\rm rad}\to-\infty$.
\end{proposition}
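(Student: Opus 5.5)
The plan is to read everything off the shooting construction. The normalized metric is $g_{n,\eta}=\lambda_\eta\widehat g_\eta$ for a constant $\lambda_\eta>0$, and $\widehat g_\eta$ is the trajectory with data \eqref{data}, $d=d_\eta$ and $v(0)=0$. So $e^{2v_\eta(0)}=\lambda_\eta$, and
\[
 e^{-2v_\eta(0)}g_{n,\eta}=\widehat g_\eta=e^{2\widehat v_\eta}(\dd t^2+g_{\Sph^{n-1}}).
\]
By Lemma~\ref{neck:variation}, $\widehat v_\eta\to v_*$ in $C^\infty$ on compact $t$-intervals, uniformly in $d$ in a compact set containing every $d_\eta$ (recall $d_\eta\to d^*$ by Lemma~\ref{shooting}). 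Hence $\widehat g_\eta\to e^{2v_*}(\dd t^2+g_{\Sph^{n-1}})$ in $C^\infty_{\rm loc}$, and $e^{2v_*}=\cosh^{4/(n-2)}((n-2)t/2)$ by \eqref{neck}, which gives \eqref{geometric-limit}. The limit has $J=0$, so it is scalar flat. For completeness and the two asymptotically Euclidean ends, substitute $r=e^{v_*}$ on each end: since $e^{v_*}\sim 2^{-2/(n-2)}e^{|t|}$, the metric becomes $\dd r^2/(1-q_*)+r^2g_{\Sph^{n-1}}$ with $q_*=\sech^2((n-2)t/2)=O(r^{-(n-2)})$. This is the Schwarzschild form cited from \cite{BrayLee09}, and both properties follow from it.

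For the scale, treat $n\ge5$ first. The completion of Theorem~\ref{main:sharpness} multiplies $\widehat g_\eta$ by $\lambda_\eta=(-8d_\eta\eta^2/(n(n-4)))^{1/2}$. Therefore
\[
 e^{4v_\eta(0)}=\lambda_\eta^2=-\frac{8d_\eta\eta^2}{n(n-4)}.
\]
Using $d_\eta\to d^*=-(n-4)L_n^2/(2n)$ gives $e^{4v_\eta(0)}\sim 4L_n^2\eta^2/n^2$.

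For $n=4$, the volume of $g_{4,\eta}$ is fixed, so $\lambda_\eta^2=(8\pi^2/3)/\Vol_{\widehat g_\eta}$. Equation \eqref{four-total} gives $\Vol_{\widehat g_\eta}=\pi^2/(d_\eta\eta)$, so
\[
 e^{4v_\eta(0)}=\tfrac{8}{3}d_\eta\eta .
\]
Here $d^*=0$ because $n=4$, so this leading term degenerates, and the crux is the rate $d_\eta\sim 3\eta/64$, equivalently $\Vol_{\widehat g_\eta}\sim 64\pi^2/(3\eta^2)$. I would compute $\Vol_{\widehat g_\eta}$ directly. The neck and passage region contribute $O(|\log\eta|)$. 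The cap beyond the passage is, after rescaling by $\eta$, close to the round solution of Lemma~\ref{shooting} with pole Schouten trace $L_4=1$ and $q\sim h/2$, that is, a round sphere of curvature $1/2$, hence radius $\sqrt2$. The neck and passage lie on one side of the equator, and the cap covers that side. So the cap is the half-sphere of radius $\sqrt2$ scaled by $\eta^{-1/2}$, with volume $\tfrac12|\Sph^4|(\sqrt2)^4\eta^{-2}=\tfrac{16\pi^2}{3}\eta^{-2}$. This gives $d_\eta\sim 3\eta/16$ and $e^{4v_\eta(0)}\sim\eta^2/2$, which is not the claimed $\eta^2/8$.

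That factor-of-four discrepancy is the main obstacle, and I would resolve it by carefully tracking the normalization of $h=\eta e^{2v}$ against the actual cap radius, including whether the reflected half also contributes a full cap. In particular I would check if the limiting cap is a full sphere on each side, which would double the volume and change the constant. The rigorous part is showing the rescaled cap volume converges: use the exit data convergence of Lemma~\ref{passage}, the tail estimate \eqref{tail} of Lemma~\ref{ode:caps} uniformly, and dominated convergence for $\int e^{4v}\,\dd t$ in the rescaled variable.

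Finally, the curvatures at the center follow from \eqref{radial-curvatures}. There $K_{\rm tan}=e^{-2v_\eta(0)}q(0)=e^{-2v_\eta(0)}\to+\infty$ and $K_{\rm rad}=e^{-2v_\eta(0)}y(0)$ with $y(0)\to-(n-2)/2$, so $K_{\rm rad}\to-\infty$, since $e^{2v_\eta(0)}\to0$ by \eqref{central-scale}.
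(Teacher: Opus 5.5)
\textbf{Verdict: genuine gap in the case $n=4$ of \eqref{central-scale}.} Your arguments for \eqref{geometric-limit}, for the Schwarzschild ends, for the case $n\ge5$ of \eqref{central-scale}, and for the central curvatures match the paper and are fine.

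For $n=4$, your volume count gives $e^{4v_\eta(0)}\sim\eta^2/2$, which contradicts the statement, and you leave the discrepancy unresolved. The error is in your picture of the cap. On each side of the neck, the rescaled metric $\eta\widehat g_\eta$ does not converge to a hemisphere. It converges to a whole round sphere of squared radius $n/(2L_n)$, which is radius $\sqrt2$ for $n=4$, as you correctly found. The neck collapses to a single point of that sphere.

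The reason is that the limiting passage solution of Lemma~\ref{passage} is itself the smooth-pole branch issuing from $h=0$, with $j\equiv L_n$. So if $T_\eta$ is the crossing time with $\Sigma$, then $h_\eta(T_\eta+\tau)\to\frac{n}{2L_n}\sech^2\tau$ locally uniformly for all $\tau\in\R$, not only for $\tau\ge0$. The stretch from the passage exit up to $\Sigma$ ($\tau<0$) is one hemisphere. The stretch from $\Sigma$ to the pole ($\tau>0$) is the other. Adding the reflected half gives
\[
 \Vol_{\widehat g_\eta}(\Sph^n)\sim 2\left(\frac{n}{2L_n}\right)^{n/2}|\Sph^n|\,\eta^{-n/2}.
\]
For $n=4$ this is $\frac{64\pi^2}{3}\eta^{-2}$, four times your value. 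Your own formulas then give $d_\eta\sim3\eta/64$ and $e^{4v_\eta(0)}=\frac83 d_\eta\eta\sim\eta^2/8$. The single doubling you proposed to check would only give $\eta^2/4$. Both missing factors of two are needed: the hemisphere between the exit and $\Sigma$, and the reflected copy.

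The limiting argument also needs more than dominated convergence on the cap. Your claim that the neck and passage contribute $O(|\log\eta|)$ is false. Up to the exit level $h=\rho$, the passage contributes as much as $\frac2n\rho^{n/2}|\Sph^{n-1}|\eta^{-n/2}$ to $\Vol_{\widehat g_\eta}$. That is the same order as the main term, so it only becomes negligible after $\rho\downarrow0$. The paper splits $\int_0^\infty h_\eta^{n/2}\,\dd t$ into four pieces:
\begin{itemize}
 \item a fixed interval $[0,L]$, which contributes $O(\eta^{n/2})$;
 \item the passage, where $x<-1/2$ gives $(h^{n/2})'\ge\frac n2h^{n/2}$, hence $\int_L^{t_o}h_\eta^{n/2}\,\dd t\le\frac2n\rho^{n/2}$;
 \item the tail beyond $T_\eta+\tau_0$, bounded by $\frac2nh_\eta(T_\eta+\tau_0)^{n/2}$ using the uniform estimate \eqref{tail};
 \item the middle piece $[t_o,T_\eta+\tau_0]$, where the local convergence applies because $t_o-T_\eta$ converges.
\end{itemize}
One then lets $\eta\to0$ first, and only afterwards $\rho\downarrow0$ and $\tau_0\to\infty$. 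In addition, to obtain convergence of $h_\eta(T_\eta+\tau)$ at a given negative $\tau$, you must first shrink $\rho$ so that the limiting exit precedes that time.
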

\begin{proof}
We first compute the volume before normalization. Write
$\widehat g_\eta=e^{2\widehat v_\eta}(\dd t^2+g_{\Sph^{n-1}})$,
where $\widehat v_\eta(0)=0$, and set
$h_\eta:=\eta e^{2\widehat v_\eta}$ as in~\eqref{scale}.
Let $t_o$ be the exit time at which $h_\eta=\rho$. On every fixed time interval,
$h_\eta\to0$, so $t_o\to\infty$. Since $d_\eta\to d^*$,
Lemma~\ref{passage} identifies the limiting exit with the round
solution $j=L_n$, $q=2L_nh/n$ in \eqref{critical-d}.

Let $T_\eta$ be the time at which the selected trajectory meets
$\mathcal C_\eta^+\subset\Sigma$. The parameter $d_\eta$ has
already been chosen; changing the auxiliary exit level $\rho$
does not change this trajectory or $T_\eta$.
Smooth finite-time dependence of the original system augmented
by $h'=-2xh$ now yields
\begin{equation}\label{cap-scale}
 h_\eta(T_\eta+\tau)\longrightarrow \frac{n}{2L_n}\sech^2\tau
 \quad\hbox{locally uniformly for }\tau\in\R.
\end{equation}
For any fixed negative $\tau$, first choose $\rho$ smaller, so that
the limiting exit precedes that time. This proves the assertion on
the entire real line.

The convergence in~\eqref{cap-scale} is local in $\tau$.
To compute the full volume, we also need bounds on the portions
outside a fixed interval of this shifted coordinate.
Choose a fixed time $L>t_V$, so the entrance height $V$ is reached
before $L$ for small $\eta$. The integral of $h_\eta^{n/2}$ on
$[0,L]$ is $O_L(\eta^{n/2})$. Between $L$ and $t_o$, the passage takes place in
$x<-1/2$, so $h'\ge h$ and
$(h^{n/2})'\ge(n/2)h^{n/2}$. Integration gives the first bound below:
\begin{equation}\label{mass-bounds}
 \int_L^{t_o}h_\eta^{n/2}\,\dd t
 \le\frac2n\rho^{n/2},\qquad
 \int_{T_\eta+\tau_0}^{\infty}h_\eta^{n/2}\,\dd t
 \le\frac2n h_\eta(T_\eta+\tau_0)^{n/2}.
\end{equation}
For the second bound choose $\tau_0$ large, uniformly for small
$\eta$, so that the uniform pole estimate~\eqref{tail} gives
$x\ge1/2$ thereafter. Then $(h^{n/2})'\le-(n/2)h^{n/2}$,
and integration to the pole, where $h\to0$, gives the second bound.
The shifted exit time $t_o-T_\eta$ converges to the negative solution
of $\frac n{2L_n}\sech^2\tau=\rho$. Equation~\eqref{cap-scale}
therefore gives convergence of the integral over
$[t_o,T_\eta+\tau_0]$. First take $\eta\to0$ with $\rho,\tau_0$
fixed, then $\rho\downarrow0$ and $\tau_0\to\infty$. The two
bounds in \eqref{mass-bounds} give
\[
 \int_0^\infty h_\eta^{n/2}\,\dd t
 \longrightarrow
 \left(\frac{n}{2L_n}\right)^{n/2}
 \int_{\R}\sech^n\tau\,\dd\tau.
\]
Since $\dd v_{\widehat g_\eta}=\eta^{-n/2}h_\eta^{n/2}
\,\dd t\,\dd v_{\Sph^{n-1}}$ and $h_\eta$ is even, the full volume
is twice the integral over $t\ge0$, multiplied by
$\eta^{-n/2}|\Sph^{n-1}|$.
Using $|\Sph^n|=|\Sph^{n-1}|\int_\R\sech^n\tau\,\dd\tau$ gives
\begin{equation}\label{volume}
 \Vol_{\widehat g_\eta}(\Sph^n)
 \sim2\left(\frac n{2L_n}\right)^{n/2}|\Sph^n|\eta^{-n/2}.
\end{equation}
We can now read off the radius after normalization.
For $n=4$, $L_4=1$, so volume normalization gives
$e^{4v_\eta(0)}=|\Sph^4|/\Vol_{\widehat g_\eta}(\Sph^4)
\sim\eta^2/8$.
For $n\ge5$, the curvature normalization and \eqref{critical-d} give
$e^{4v_\eta(0)}=-8d_\eta\eta^2/(n(n-4))
\sim(4L_n^2/n^2)\eta^2$. This proves
\eqref{central-scale}. In dimension four the same calculation also
gives $d_\eta\sim3\eta/64$ from \eqref{four-total}, although this
finer parameter asymptotic was not needed in the matching.

Finally, normalization multiplies $\widehat g_\eta$ by a
constant, so $e^{-2v_\eta(0)}g_{n,\eta}=\widehat g_\eta$.
Lemma~\ref{neck:variation} gives $\widehat v_\eta\to v_*$ in
$C^\infty_{\rm loc}$, proving~\eqref{geometric-limit}.
The limit is scalar flat because $J=0$. In Euclidean coordinates
$\xi:=e^t\omega$, it is
\[
 2^{-4/(n-2)}(1+|\xi|^{-(n-2)})^{4/(n-2)}|\dd\xi|^2,
 \qquad\xi\ne0.
\]
It is asymptotically Euclidean as $|\xi|\to\infty$ and is
invariant under $\xi\mapsto\xi/|\xi|^2$. Inversion therefore
gives the same description at zero, and both ends are complete.
For the sectional curvatures at the neck, we have
$K_{\rm tan}(0)=e^{-2v_\eta(0)}$ and
$K_{\rm rad}(0)=e^{-2v_\eta(0)}y(0)$. Since
$y(0)\to-(n-2)/2<0$ and $e^{v_\eta(0)}\to0$, the stated
curvature divergences follow.
\end{proof}

\section*{Use of generative AI}
The author used ChatGPT (OpenAI) to assist with mathematical derivations,
proof organization, consistency checks, and language editing. The author
takes full responsibility for the mathematical arguments and the content
of this manuscript.

\end{document}